\newtheorem{proposition}{Proposition}[section]
\newtheorem{theorem}{Theorem}[section]
\newtheorem{lemma}[proposition]{Lemma}
\newtheorem{remark}{Remark}[section]
\newtheorem{corollary}[theorem]{Corollary}
\newtheorem*{thank}{Acknowledgments}
\newcommand{\N}{\mathbb{N}}
\newcommand{\R}{\mathbb{R}}
\numberwithin{equation}{section}
\newcommand{\e}{\varepsilon}
\numberwithin{equation}{section}
\author{}
\date{}
\title{Singular solutions and bifurcation 
diagram of semilinear elliptic equations with 
general nonlinearity in two dimensions}
\author{Hiroaki Kikuchi, Kenta Kumagai}
\date{}
\begin{document}
\maketitle

\begin{abstract}
In this paper, we investigate semilinear elliptic equations with general exponential-type nonlinearities
in two dimensions. For such nonlinearities, 
we establish two main results. The first is the construction of a singular solution. Recently, Fujishima, Ioku, Ruf, and Terraneo~\cite{MR4876902} proved the existence of singular solutions under certain assumptions for nonlinearities.
We succeed in relaxing these conditions by providing the precise asymptotic form of a singular solution.
Our second result concerns the bifurcation diagram of regular solutions. While the bifurcation structure has been extensively studied in three or higher dimensions, comparatively little was known in two dimensions until recently. In \cite{kuma2025}, the second author proved that the bifurcation curve possesses infinitely many turning points for supercritical analytic nonlinearities. In the present work, we refine this analysis 
by showing the bifurcation curve oscillates infinitely many times 
\textit{around some point}, without assuming analyticity of the nonlinearities. The novelty of our approach lies in the introduction of a generalized Emden-type transformation.

{\flushleft{{\bf 2020 Mathematical Subject 
Classification:} 
35J61, 35J25, 35B32, 34E05 %34B18,   
}}

{\flushleft{{\bf Keywords:} 
semilinear elliptic equations, 
general nonlinearity, 
singular solution,
bifurcation diagram, 
generalized Emden-type 
transformation
}}

%new point: 
%$\lambda = \lambda_{*}$ de 
%turnign points ga aru. 

\end{abstract}
\section{Introduction}
\label{intro}
In this paper, we consider a radial singular solution of the following 
semilinear elliptic equation with $d=2$:
\begin{equation}
\label{singulareq-expo}
-\Delta u =f(u) \qquad \text{in $\mathbb{R}^d$}. 
\end{equation}
Throughout this paper, we assume that 
the nonlinearity $f$ satisfies
    \begin{equation}
    \label{zentei-intro}
    f \in C^1[0, \infty), \hspace{4mm} 
    f > 0 \quad\text{on $[0, \infty)$}. 
    \end{equation}
Here, we say that $U(x)=U(r), r = |x|$ 
is a \textit{radial singular solution} of \eqref{singulareq-expo} if $U 
\in C^{2}(0, \infty)$ satisfies \eqref{singulareq-expo} except for 
$x = 0$
and $U(r)\to \infty$ as $r\to 0$. 
\subsection{Singular solution 
in higher dimensional cases}
The first goal of this paper is to study the existence of singular solutions to \eqref{singulareq-expo} for general 
exponential type nonlinearities $f$.
For higher dimensional cases 
$d\ge 3$, the existence and multiplicity of singular solutions have been studied intensively.
For the case $f(s) =(1+s)^p$, 
so called \textit{Serrin 
exponent} plays an important 
role. 

Indeed, it is known \cite{MR605060} that
in $1<p<\frac{d}{d-2}$, any singular solution behaves like 
the fundamental solution
while in $p>\frac{d}{d-2}$, \eqref{singulareq-expo} has the explicit radial singular solution $U_{\infty}(r)=r^{\frac{-2}{p-1}}-1$. On the other hand, the multiplicity of singular solutions is determined by the Sobolev sub/super-criticality. Indeed, when $\frac{d}{d-2}<p \leq   \frac{d + 2}{d - 2}$, it is shown by 
\cite{MR982351, MR768731} 
that there exist infinitely many radial singular solutions. 
On the other hand, Serrin and Zou \cite{MR1272885} proved the uniqueness of radial singular solutions 
if $p > \frac{d + 2}{d - 2}$. 
For exponential-type nonlinearities, a number of studies for singular solutions have been done such as $f=e^u+o(u)$ in \cite{Mi2015, Mi2020}, $f=e^{u^p}$ in \cite{KiWei}, $f=e^{e^u}$ in \cite{Marius} and so on.

Recently, Fujishima and Ioku \cite{FI1}, 
Miyamoto~\cite{Mi2018}
and Miyamoto and Naito \cite{Mi2023} generalized the above results and clarified the existence, uniqueness/multiplicity and
asymptotic behavior of singular solutions for general nonlinearities. More precisely, in \cite{FI1, Mi2023} the following assumptions are considered:
\begin{equation}
\label{asf1}
F(s):=\int_{s}^{\infty}\frac{dt}{f(t)} <\infty \hspace{4mm}\text{and}\hspace{4mm}
\frac{1}{q}:=\lim_{s \to\infty}
H[f](s) \hspace{4mm}\text{exists}, 
\end{equation}
where 
    \begin{equation*} 
    H[f](s) :=\frac{f(s) f''(s)}{(f'(s))^{2}}.
\end{equation*}
If  $f\in C^2[0,\infty)$ satisfies $f>0$ in $[0,\infty)$
and \eqref{asf1} hold, we deduce that $q\ge 1$, $f'(u), f''(u)>0$ for $u>u_0$ with some $u_0>0$ 
(see e.g. Lemma \ref{lem2-2} below), and 
\begin{equation*}
   \lim_{s \to\infty} F(s)f'(s)=\lim_{s\to\infty} \frac{(F(s))'}{(1/f'(s))'}=q. 
\end{equation*}
Moreover, we mention that the H\"older conjugate of the ratio $q$ measures a variant of the growth rate of $f$. Indeed, we can easily confirm that $q=\frac{p}{p-1}$ if $f=(1+u)^p$ and $q=1$ if $f=e^{u}$.

Fujishima and Ioku \cite{FI1} constructed infinitely many number of radial singular solutions and obtained the asymptotic behavior of them when $\frac{d+2}{4}<q<\frac{d}{2}$.~\footnote{Here, we note 
that $q = \frac{d}{2}$ 
if $f(s) = s^{\frac{d}{d -2}}$
and 
$q = \frac{d + 2}{4}$ if 
$f(s) = s^{\frac{d + 2}{d - 2}}$, respectively.} 
On the other hand, Miyamoto and Naito \cite{Mi2023} proved the uniqueness of singular solutions and obtained the asymptotic behavior of the singular solution when $q<\frac{d+2}{4}$.

\subsection{Singular solution in 
two dimensions}
Now, we pay our attention to the singular solution to \eqref{singulareq-expo} in two dimensions. 
Compared with the higher dimensional 
case $d \geq 3$, it is not 
well understood until recently.

In this case, it is known that $f(s) = e^{s}$ 
corresponds to the Serrin 
critical nonlinearity 
$f(s) = s^{\frac{d}{d - 2}}$ for higher dimensional case. 
Indeed, if the growth 
rate of $f(s)$ is lower than or equal to that of $e^{s}$, then 
any singular solution has 
the same singularity as 
the fundamental solution 
$- \log |x|$ 
(see \cite{MR2538571, MR3944340}). 
On the other hand, 
it is known in 
\cite{MR1338473, MR4876902, MR3944340}
that for 
\begin{equation}
\label{ex1-1}
 f_{1}(s) := \frac{4}{B B'} 
    \frac{e^{s^{B'}}}{s^{2B' -1}}, 
    \qquad 
    f_{2}(s) := 4 
    \frac{e^{e^{s}}}{
    e^{2s}}
    \end{equation}
which satisfy 
$\lim_{s \to \infty}f(s)/e^{s} = \infty$, 
the following explicit 
singular solutions 
    \begin{equation} \label{ex1-2}
  u_{1, \infty}(r) 
  = (- 2 \log r)^{1/B'}, 
    \qquad 
    u_{2, \infty}(r) 
    = \log (- 2 \log r)
    \end{equation}
were founded. 
Note that $\lim_{r \to 0} 
u_{i, \infty}(r)/(- \log r) 
= 0$. 
See also 
\cite{MR3944340, MR4263686} for the existence 
of a singular solution in two dimensions. 

Based on the above 
explicit singular solutions, 
Fujishima, Ioku, Ruf and 
Terraneo~\cite{MR4876902} 
extend above results to a
wider class of nonlinearities. 
To state their results, we prepare several notations. 
Define 
  \[
  \frac{1}{B_{1}[f](s)}:= 
  (- \log F(s)) [1 - f^{\prime}(s) F(s)], 
  \hspace{4mm} 
   \frac{1}{B_{2}[f](s)} := 
   f^{\prime}(s) F(s) \left(- \log F(s) \right)^{2} 
   \left[\frac{f(s) f^{\prime \prime}(s) F(s)}{f^{\prime}(s)} 
   - 1\right]. 
   \]

Then, they assume the following: 
\begin{enumerate}
  \item[(F1)] 
  $f \in C^{2}(s_{0}, \infty), f(s) > 0, F(s) < \infty$ and 
  $f^{\prime}(s) > 0$ for $s > s_{0}$ with some $s_{0} > 0$, 
  where $F$ is defined by \eqref{asf1}. 
  \item[(F2)] 
  The limits $\lim_{s \to \infty} f^{\prime}(s) F(s)$ 
  and $B:=\lim_{s \to \infty} B_{2}[f](s)$ exist and satisfy 
  \[
  \lim_{s \to \infty} f^{\prime}(s) F(s) = 1. 
  \]
\end{enumerate}
In addition, they define 
 \begin{equation} \label{ex1-3}
  \widetilde{u}(r) :=F^{-1} 
  \left[\Phi(v(r))\right]=F^{-1}(\frac{B}{4}r^2(-2\log r+1)).
  \end{equation}
Here, the functions 
$v, \phi$ and $\Phi$ 
are defined by 
  \begin{equation*} 
  v(x) := u_{1,\infty}(x), 
  \quad 
  \phi(s) := f_1(s), 
  \quad 
  \Phi(s) := \int_{s}^{\infty} \frac{d \tau}{f_1(\tau)} 
  = \frac{B}{4} \frac{s^{B^{\prime}} + 1}{e^{s^{B^{\prime}}}}, 
  \end{equation*}
if $B > 1$, and 
  \begin{equation*} 
  v(x) := u_{2,\infty}(x), \quad 
  \phi(s) := f_2(s), \quad 
  \Phi(s) := \int_{s}^{\infty} \frac{d \tau}{f_2(\tau)} 
  = \frac{1}{4} \frac{e^{e^{s}} + 1}{e^{s}} 
  \end{equation*}
if $B = 1$, 
where $u_{i, \infty}$ and 
$f_{i}(s) \; (i = 1, 2)$ are 
defined by \eqref{ex1-1} and 
\eqref{ex1-2}. 
%For a radially symmetric function $v$ on $\R^{2}$, le \textcolor{{Kokonokakunin} \begin{equation} \label{ex1-3} \widetilde{u}(r) :=F^{-1}  \left[G(v(r))\right]=F^{-1}(\frac{B}{4}r^2(-2\log r+1)) ,  \end{equation}
Note that since $v$ satisfies 
  $- \Delta v = \phi(v)$, 
  $\widetilde{u}$ becomes a solution to 
   \[
    - \Delta \widetilde{u} 
    = f(\widetilde{u}) + 
    \frac{|\nabla \widetilde{u}|^2}
    {f(\widetilde{u}) F(\widetilde{u})}
    \left[
    \phi'(v) \Phi(v) - f'(\widetilde{u}) 
    F(\widetilde{u})
    \right]. 
   \] 
In order to regard the second term of the right-hand side of the above 
equation as a reminder term, they additionally assumed the following
\begin{enumerate}
  \item[(F3)]
  \[
  \lim_{x \to 0} (- \log |x|)^{\frac{1}{2}} 
  [R_{1}(x) + R_{2}(x)] = 0,  
  \]
where 
  \[
  R_{1}(x) := \biggl| 
  \frac{1}{B_{1}[f] (\widetilde{u}(x))} - 
  \frac{1}{B_{1}[\phi] (v(x))}
  \biggl|, \hspace{4mm} 
   R_{2}(x) := \biggl| 
  \frac{1}{B_{2}[f] (\widetilde{u}(x))} - 
  \frac{1}{B_{2}[\phi] (v(x))}
  \biggl|. 
  \]
  \end{enumerate}
In \cite{MR4876902}, it was shown that 
under conditions (F1)--(F3),
\eqref{singulareq-expo} has a singular solution 
satisfying 
\begin{equation}
\label{eq1-FI}
  u(x) = \widetilde{u}(x) + O(
f(\widetilde{u}(x)) F(\widetilde{u}(x)) R(x)) 
\hspace{4mm} \mbox{as $x \to 0$}\hspace{4mm}\text{with $R(x)=\sup_{|y| \leq |x|} (R_{1}(y) + R_{2}(y)) $}. 
\end{equation}
We remark that 
    \begin{equation} \label{eq1-1}
f(s) = s ^{r} e^{s^{p}}\; (p > 1, r \in R), 
    \hspace{4mm} 
f(s) = e^{s^{p} + s^{r}}\; 
(\mbox{$p > 1, p/2 > r > 0$ or 
$1 < p < 4, r > p - 1$}), 
\hspace{4mm} 
f(s) = e^{e^{s}}
    \end{equation}
satisfy 
conditions (F1)--(F3).  
However, from their result, 
we do not know whether there 
exists a singular solution or not for 
the following nonlinearities 
    \begin{equation} \label{non1}
   f(s) = e^{s^{p} (\log s)^r}\quad 
    (p > 1, r \in \R \setminus \{0\}), 
    \hspace{4mm} 
    f(s) = 
    \exp[\exp[\exp[s]]] 
    \quad \mbox{(triple 
    exponential)}
    \end{equation}
and 
    \begin{equation} \label{non2}
    f(s) = e^{s^{p} + s^{r}}\quad  
\mbox{($(p, r)$ which does not satisfy 
the conditions in \eqref{eq1-1})}. 
    \end{equation}
See Remark 2.2 of 
\cite{MR4876902}. 

Our first result is concerned with 
the construction of a singular solution 
for general nonlinearities 
in which 
the above nonlinearities 
\eqref{non1} and \eqref{non2}
are included. 
In order to state our result, 
we need preparations. 
We put $g(s) := \log f(s)$ and 
    \[
    H(s) := H[g](s) = 
    \frac{g(s)g''(s)}{(g'(s))^2}. 
    \]
Throughout this paper, we always assume the following conditions: 
    \begin{enumerate}
        \item[(G1)]
        The function $g >0$ satisfies 
            \[\text{$ 
        \int_{s}^{\infty}\frac{\,du}
        {g(u)}<\infty$ for all $s$ 
        large, and there exists}\; 
        \frac{1}{q} = 
          \lim_{s \to \infty} H(s).
            \]
    \item[(G2)]
There exists $C>0$ and $s_0 > 0$ such that
$g \in C^{5}(s_{0}, \infty)$ and 
$|H^{(k)}(s)|<C(\frac{g'}{g})^k(s)$ for any $k\in \{1,2,3\}$ and $s>s_0$. 
    \end{enumerate}
Then, we obtain the following:
    \begin{theorem}\label{thm-sing}
    Let $d = 2$. 
    Assume that (G1) and (G2). For any $\e\in (0,\frac{1}{2}]$,  
        \eqref{singulareq-expo} has a radial singular 
        solution $U_{\infty}$
      satisfying       
        \begin{equation} 
        \label{eq-asy}
        \begin{split}
        g(U_{\infty}(r)) 
    = & -2 \log r 
    -2\log (-2 \log r) + 
    \log [\frac{g}{g'}(g^{-1}(-2 \log r))] 
    +\log 4 - \log q\\
    &  + \log qH(g^{-1}(-2 \log r)) 
    + o((- \log r)^{-1+\varepsilon}) 
    \hspace{4mm} \mbox{as $r \to 0$}. 
        \end{split}
        \end{equation} In addition, $U_{\infty}$ has a zero at some $r=r_0$.
        \end{theorem}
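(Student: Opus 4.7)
The plan is to exploit the fact that, in dimension two, the radial Laplacian becomes first-order-free in logarithmic coordinates. Setting $t = -\log r$, the equation $-u'' - u'/r = f(u)$ becomes $\ddot u = -e^{-2t} f(u)$, where the dot denotes $d/dt$. I then apply the generalized Emden-type substitution $v(t) := g(u(e^{-t}))$. Using $f(u) = e^{v}$ and $g''(u)/(g'(u))^{2} = H(u)/g(u)$, this transforms the equation into
\[
\ddot v \,-\, \frac{H(u)}{v}\,(\dot v)^{2} \;=\; -\,g'(u)\,e^{\,v - 2t}.
\]
After the shift $v(t) = 2t + w(t)$, the target asymptotic \eqref{eq-asy} becomes the concrete ansatz
\[
w_{*}(t) \,:=\, -2\log(2t) \,+\, \log\frac{g(u_{*})}{g'(u_{*})} \,+\, \log\frac{4}{q} \,+\, \log\bigl(qH(u_{*})\bigr),\qquad u_{*}(t) := g^{-1}(2t).
\]

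The next step is to plug $w_{*}$ into the equation for $w$ and estimate the residual $\mathcal{R}(t)$. Differentiating $w_{*}$ requires only $\dot u_{*} = 2/g'(u_{*})$ together with the chain rule, and under (G1)--(G2) each successive derivative of $H\circ g^{-1}(2t)$ yields a factor $1/t$ up to bounded multiples of $(g/g')^{k} H^{(k)}$. The leading balance of the ODE collapses to the identity $-2H(u_{*})/t = -2H(u_{*})/t$ between the $(\dot v)^{2}H/v$ term and the source $g'(u)e^{w}$, while the correction $\log(qH(u_{*}))$ embedded in $w_{*}$ is designed precisely to kill the next-order mismatch arising from $H(u_{*})\neq 1/q$ at finite $t$. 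The expected outcome is $\mathcal{R}(t) = o(t^{-1+\e})$ for the prescribed $\e\in(0,1/2]$.

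I then look for the actual solution in the form $v = 2t + w_{*} + \xi$, where $\xi$ satisfies a linear equation plus a small nonlinear remainder, with linear coefficients vanishing as $t\to\infty$. The plan is to invert the linear part by variation of parameters along the decaying-mode direction and to solve the resulting integral equation by contraction mapping on $[T_{0},\infty)$ for large $T_{0}$, in the weighted norm $\sup_{t\ge T_{0}} t^{1-\e}|\xi(t)|$. This produces a unique small fixed point with $\xi(t) = o(t^{-1+\e})$; translating back to the radial variable gives a singular solution $U_{\infty}$ on some interval $(0, r_{0})$ obeying \eqref{eq-asy}. Extension to $(0,\infty)$ is standard ODE continuation since $f\in C^{1}[0,\infty)$ and $f>0$. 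For the existence of a zero, I integrate $(ru')' = -rf(u)$: the quantity $ru'(r)$ is strictly decreasing, and as long as $u$ remains nonnegative, $f(u)$ is bounded below by a positive constant, forcing $ru'(r)\to -\infty$ and in turn driving $u$ below zero at some finite $r_{0}$.

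The main technical obstacle lies in the residual estimate. Because $w_{*}$ is built from iterated logarithms composed with $H\circ g^{-1}(2t)$, each cancellation in the transformed ODE must be checked to one further order than $w_{*}$ itself encodes, and the numerous error terms must be packaged uniformly as $o(t^{-1+\e})$. Hypothesis (G2), which bounds $H^{(k)}$ ($k\le 3$) by $(g'/g)^{k}$, is tailored exactly to yield these cancellations and to make the contraction constant arbitrarily small once $T_{0}$ is taken sufficiently large.
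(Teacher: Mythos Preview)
Your overall strategy---Emden change of variables, an explicit two-term asymptotic ansatz, then a fixed-point argument for the remainder---is the same route the paper takes, and your argument for the zero of $U_\infty$ is fine. The gap is in your treatment of the linearized equation for the remainder.

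After linearizing at the ansatz, the principal part of the equation for $\xi$ is $\ddot\xi + \dfrac{c(t)}{t}\,\xi = 0$ with $c(t)\to 2/q>0$ (plus a lower-order $\dot\xi/t$ term). This operator is \emph{oscillatory}, not hyperbolic: both fundamental solutions behave like a positive power of $t$ times $\sin(\beta\sqrt{t})$ or $\cos(\beta\sqrt{t})$. There is no decaying mode, so ``variation of parameters along the decaying-mode direction'' is not available. A crude bound on the Duhamel kernel then makes the integral of the inhomogeneity diverge; with the sharp residual $\mathcal R=O(t^{-2+o(1)})$ (your $o(t^{-1+\varepsilon})$ is weaker still) one gets at best $|\xi(t)|=O(t^{-1/2+o(1)})$, which does not give $o(t^{-1+\varepsilon})$ for any $\varepsilon\in(0,\tfrac12]$.

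The paper resolves this by constructing explicit WKB-type fundamental solutions $a(\rho)\sin b(\rho)$, $a(\rho)\cos b(\rho)$ with $a^4=\rho/G(\rho)$ and $a^2 b'=1$, and then performing an oscillatory integration by parts inside the Duhamel integral (their Lemma~\ref{lem3-1}): writing $\sin(b(\rho)-b(\tau))=\tfrac{1}{b'(\tau)}\tfrac{d}{d\tau}\cos(b(\rho)-b(\tau))$ gains a full power of $\tau$ at the price of one $\tau$-derivative on the inhomogeneity. This is exactly why (G2) asks for $g\in C^{5}$ and control of $H^{(k)}$ up to $k=3$: one needs bounds on $y_2'''$ and $J'$ to close this step. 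Your plan must replace the ``decaying-mode'' inversion by this oscillatory mechanism; without it the contraction does not close in the norm you propose.
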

\begin{remark}
\rm{
%Our result includes the nonlinearity $f(s) = e^{e^{e^{s}}}$ and so on, which are not treated in the previous results (see Lemma \ref{exam}). In addition, it is rather easy to check whether the nonlinearity satisfies conditions (G1) and (G2) or not.  
%In \cite{MR4876902}, it is important to the functions $v$ and $g$ defined by \eqref{eq1-3} and \eqref{eq1-4} satisfies $- \Delta v = g(v)$. On the other hand, we do not rely on such an explicit solution and the proof is different from that of \cite{MR4876902}. Actually, our strategy of  the construction of a singular solution is to give first and second asymptotic (see $y_{1}$ and $y_{2}$ in \eqref{eq-y4} and \eqref{eq-y2} below) and to seek a solution of the remainder equation by the contraction mapping theorem. Here, we choose suitable fundamental solutions (see \eqref{}) depending on $g$.
\begin{enumerate}
\item The condition (G2) is not so strong assumption. Indeed, typical nonlinearities (stated in Subsection 2.1) satisfy (G1), (G2) and \eqref{intro-eq}.
\begin{equation}
\label{intro-eq}
H^{(k)}=o((g'/g)^{k}) \qquad\text{for all $k\in\{1,2\}$}.
\end{equation}
In addition, if (G1), (G2) and \eqref{intro-eq} are satisfied, we obtain $(F1)$ and $(F2)$ with $B=q$ (see Lemma \ref{bnolem}). On the other hand, under the same assumptions, $(F3)$ holds if and only if 
\begin{equation}
\label{doutiF3}
    g^{1/2}(s)(H(s)-\frac{1}{q})=o(1)\quad\text{and}\quad g^{1/2}(s)
    \frac{H'g}{g'}(s)=o(1) 
    \qquad \mbox{as 
    $s \to \infty$}.
\end{equation}
We remark that $f=e^{e^{e^u}}$ and $f=e^{u^p(\log u)^q}$ does not satisfy \eqref{doutiF3} (see Subsection 2.1).
\item Their approach in \cite{MR4876902} 
is based on the 
expectation that the asymptotic 
behavior of the singular solution is 
sufficiently close to $\Tilde{u}$ 
defined in \eqref{ex1-3}, and on constructing the singular solution 
$u$, which satisfies 
    \[
g(u)=g(\Tilde{u})+O(R(-2\log r))=g(\Tilde{u})+o((-\log r)^{-1/2}), \qquad\text{(see Lemma \ref{apenlem})}
    \]
by solving the remainder equation of 
$u-\Tilde{u}$
via the contraction mapping theorem. 
It is worth mentioning that their singular solution coincides with our singular solution $U_{\infty}$ 
under condition $(F3)$;
moreover, Theorem \ref{thm-sing} 
provides the following finer asymptotic behavior 
(by letting $\e=1/2$)
    \begin{equation} \label{eq-asym1}
g(U_{\infty})=g(\Tilde{u})+\log q H(g^{-1}(-2\log r))+o((-\log r)^{-1/2}). 
    \end{equation}
See Lemma \ref{apenlem} and Remark \ref{apenrem}. 
We point out that the second term 
$\log q H(g^{-1}(-2\log r))$ of 
\eqref{eq-asym1} 
plays a key role in the construction of the singular solution via a contraction argument. 
Indeed, under condition (F3), 
the second term satisfies 
$\log q H(g^{-1}(-2\log r)) = o(
(-\log r)^{-1/2})$, which can be regarded as the remainder term. 
Hence, the singular solution can be constructed (through the contraction argument) based solely on $\Tilde{u}$. In particular, their singular solution 
$u$ corresponds to our solution 
$U_{\infty}$. On the other hand, if 
$H(s)-1/q$ converges to $0$ slowly, the second term $\log q H(g^{-1}(-2\log r))$ 
can no longer be treated as a remainder term, and thus their approach fails.

%Their approach in \cite{MR4876902} is to expect the asymptotic of singular solution is sufficiently close to $\Tilde{u}$ defined by \eqref{ex1-3}, and to construct the singular solution $u$, which satisfies    \[
%g(u)=g(\Tilde{u})+O(R(-2\log r))=g(\Tilde{u})+o((-\log r)^{-1/2}), \qquad\text{(see Lemma \ref{apenlem})}
%  \]
%by considering the reminder equation 
%of $u-\Tilde{u}$ and using 
%the contraction mapping theorem. We mention that their singular solution exactly the same with our singular solution $U_{\infty}$ under the condition $(F3)$; and Theorem \ref{thm-sing} gives the following finer asymptotic behavior (by letting $\e=1/2$)
%\[
%g(U_{\infty})=g(\Tilde{u})+\log q H(g^{-1}(-2\log r))+O((-\log r)^{-1/2}). 
%    \]
%for the singular solution (see Lemma \ref{apenlem} and Remark \ref{apenrem}). 
%We point out that the second term plays a key role in the construction of the singular solution by a contraction argument. Indeed, under the condition (F3), the second term is equal to $o(-\log r)^{-1/2})$, which can be treated as the reminder term. Hence, we can construct the singular solution (by the contraction argument) based solely on $\Tilde{u}$. In particular, their singular solution $u$ corresponds to $U_{\infty}$. On the contrary, 
%if $H(s)-1/q$ converges to $0$ slowly, the second asymptotic cannot be treated as the reminder term and thus their approach fails. }

\item Our approach does not rely on the explicit function $\Tilde{u}$, 
but instead provides the first and second asymptotics explicitly, as in \cite{MR4263686}. These finer asymptotics enable us to apply the contraction argument, thereby allowing us to remove the technical condition (F3). In particular, our result covers the triple exponential nonlinearity 
$f(s) = \exp[\exp[\exp[s]]]$, among others, which were not treated in previous works (see Subsection 
\ref{ex-non}).

%Our approach is not to rely on the explicit function $\Tilde{u}$ but to give first and second asymptotic 
%(see $y_{1}$ and $y_{2}$ in \eqref{eq-y4} and \eqref{eq-y2} below) explicitly as in \cite{MR4263686}. These finer asymptotics enables us to use the contraction argument, and as a result we can remove the technical condition (F3). In particular, our result includes the triple exponential nonlinearity $f(s) = \exp[\exp[\exp[s]]]$ and so on, which are not treated in the previous results (see Subsection \ref{ex-non}). }
%\item 
%We would like to stress 
%that we can check the 
%conditions (G1) and (G2) 
%rather easily because we do not 
%need to compare the explicit 
%solution \eqref{ex1-3}. 
\item The second author \cite{kuma2025} showed the existence of singular solution if $f$ satisfies the supercritical condition \eqref{supercritical} below in the sense of Trudinger-Moser embedding. However, there are little information about the singular solution.

\item 
From the results in the higher-dimensional case, we expect that 
\eqref{singulareq-expo} 
admits infinitely many singular solutions if the nonlinearity 
$f$ is subcritical in the sense of the Trudinger–Moser embedding, whereas 
\eqref{singulareq-expo} possesses a unique singular solution if 
$f$ is supercritical. To the best of the authors’ knowledge, this problem remains open. 
Here, we stress that 
the situation in two dimensional 
case is different from 
that in the higher dimensional
case and it has own 
difficulty. For the uniqueness problem in the higher-dimensional case with $f=e^u$, 
the Emden transformation, which transfers any singular solution $u$ to the difference between 
$u$ and the specific singular solution $u_*$ to \eqref{singulareq-expo} with $f(u) = e^u$, plays a key role. Indeed, the difference 
$u - u_*$ satisfies a second-order differential equation that admits a Lyapunov function, thereby allowing the use of phase plane analysis 
(see (2.21) in Miyamoto-Naito \cite{Mi2020}). As a consequence, the uniqueness of singular solutions can be established. We remark that for the general exponential type nonlinearities, the generalized Cole–Hopf transformation 
(see \cite{Mi2018,Mi2023}) 
reduces the analysis to the exponential nonlinearity. 
On the other hand, this approach appears to fail when the equation for the difference $u - u_*$ does not admit a Lyapunov function due to the absence of the first derivative term.

\end{enumerate}
}
\end{remark}

\subsection{Application: bifurcation diagram for supercritical nonlinearities}
As an application, we clarify the bifurcation diagram of 
regular (classical) solution to the following
\begin{equation}\label{eq-expo}
    \begin{cases}
    - \Delta u = \lambda f(u) 
    & \hspace{4mm} \mbox{in $B_{1}$}, \\
    u = 0 & \hspace{4mm} \mbox{on $B_{1}$},  
    \end{cases}
    \end{equation}
    where $\lambda > 0$ is a parameter and 
$B_{1}$ is the unit ball in $\R^{d}$ with $d=2$.
By the maximum principle, the solution 
to \eqref{eq-expo} becomes positive.  
In addition, it follows from the result 
of Gidas, Ni and Nirenberg~\cite{Gidas} that 
the solution to \eqref{eq-expo} automatically 
radially symmetric and decreasing. Thus our equation in \eqref{eq-expo} is reduced 
to an ordinary differential equation. Moreover, we note that if $v$ is a regular/singular solution of \eqref{singulareq-expo} and $r_0$ is the first zero of $v$, $(\lambda,u)=(r_{0}^2, v(r_0r))$ is a regular/singular solution of \eqref{eq-expo}. From this fact, we can verify that \eqref{eq-expo} has 
an unbounded $C^{1}$ curve emanating from $(0,0)$ and
represented by
$\{(\lambda(\alpha), u(r, \alpha))\colon 
\alpha > 0\}$, where $u(r, \alpha)$ is 
 a solution satisfying 
 $\|u(\cdot, \alpha)\|_{L^{\infty}}=u(0, \alpha) = \alpha$ (see \cite{korman, Mi2015}).

First, we consider  
the case where 
$3\le d\le 9$ and $f$ is supercritical,
that is, 
the exponent $q$ 
defined by 
\eqref{asf1} satisfies 
$q < \frac{d + 2}
{4}$. 
In this case, 
the 
bifurcation structure is extensively studied (see e.g. \cite{Nor, BD, Dan, Dancer-e, 
KiWei, korman, Marius, Merle, Mi2014, Mi2015, Mi2018-2, Mi2023} and references therein); and finally, Miyamoto \cite{Mi2018} and Miyamoto and Naito \cite{Mi2023} 
proved that the bifurcation curve oscillates infinitely often around $\lambda=\lambda_{*}$
converges to the unique singular solution $(\lambda_{*}, U_{*})$. In particular, there exist infinitely many solutions for $\lambda=\lambda_{*}$. 

Our second purpose is to show that the bifurcation curve have infinitely turning points around some 
$\lambda = \lambda_{*}$ for even when $d = 2$ and $f$ is supercritical in the sense of Trudinger-Moser inequality. 
Note that Atkinson and Peletier
~\cite{MR823114} 
proved that 
the bifurcation curve $(\lambda(\alpha), 
u(r, \alpha))$ 
has a unique turning point and converges to  
$(\lambda, u) = (0, \infty)$ as $\alpha 
\to \infty$ for the subcritical nonlinearity $f=e^{u^p}$ with $1\le p<2$
(see also Volker~\cite{MR1319012}). 
On the other hand, until recently, there have been few studies trying to prove these properties for supercritical nonlinearities in two dimensions. 
 The second author \cite{kuma2025}  proved that the bifurcation curve has infinitely many turning points for any analytic nonlinearities satisfying the supercritical 
 condition \eqref{supercritical} 
 below. 
 Note that \eqref{supercritical} 
 holds if and only if $q < 2$ 
 (see Lemma \ref{lem2.6}). 
 However, it is not 
 clear whether the bifurcation curve oscillates around some $\lambda_{*}$ 
 or not. 
 Recently, Naimen \cite{Naimen-accept,naimen2025-1, naimen2025-2} established
for a class of nonlinearities including $f(s)=e^{s^p}$ with $p>2$ and $f=e^{e^s}$, by analyzing the concentration phenomenon of the solutions, the bifurcation curve oscillates infinitely many times around some $\lambda_{*}$. We remark that the approach needs  
conditions on the existence and the asymptotic behavior of the singular solution, 
which are ensured by Fujishima 
and Ioku~\cite{MR4876902}  
for the nonlinearities 
$f(s) = e^{s^p}, 
e^{e^s}$. 
However, we 
do not know whether 
the condition holds 
for general supercritical 
nonlinearity 
(see Page 22 of 
Naimen~\cite{naimen2025-2}). 

Thanks to Theorem \ref{thm-sing}, we obtain the following result for general supercritical nonlinearities:

\begin{theorem}\label{thm-bi}
Let $d = 2$. 
    Assume that \eqref{zentei-intro}, (G1), (G2) 
    and $1 \leq q < 2$. We 
    define $\lambda_{\infty}=r_{0}^2$, where $r_0$ is the first zero of the singular solution $U_{\infty}$ obtained in 
    Theorem \ref{thm-sing}. 
    Then, the bifurcation 
curve of \eqref{eq-expo} has 
    infinitely many turning point around 
    $\lambda = \lambda_{\infty}$. In particular, \eqref{eq-expo} has infinitely many solutions at $\lambda 
    = \lambda_{\infty}$.
    \end{theorem}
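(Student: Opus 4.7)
The strategy is to analyze regular solutions $u(r,\alpha)$ of \eqref{singulareq-expo} in the limit $\alpha\to\infty$ by reducing them, via a generalized Emden-type transformation anchored at the singular solution $U_\infty$ of Theorem \ref{thm-sing}, to trajectories of an asymptotically autonomous planar system whose linearization at a specific equilibrium is of spiral type precisely when $1\le q<2$. The resulting infinite spiraling then forces the first-zero map $\alpha\mapsto r_0(\alpha)^2=\lambda(\alpha)$ to oscillate around $\lambda_\infty$ infinitely often as $\alpha\to\infty$.

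First I reformulate the problem. By the correspondence stated just before the theorem between radial solutions of \eqref{singulareq-expo} with first zero $r_0(\alpha)$ and solutions of \eqref{eq-expo} at $\lambda(\alpha)=r_0(\alpha)^2$, it suffices to show that $\lambda(\alpha)\to\lambda_\infty$ with $\lambda(\alpha)-\lambda_\infty$ changing sign infinitely often. Setting $t=-\log r$ and using the explicit expansion \eqref{eq-asy}, I introduce the generalized Emden variable
\begin{equation*}
W(t;\alpha):= g(u(e^{-t};\alpha))-\Psi(t),
\end{equation*}
where $\Psi(t)$ collects the explicit leading terms on the right-hand side of \eqref{eq-asy} (including the second-order correction $\log qH(g^{-1}(2t))$). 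The radial ODE then yields a second-order equation for $W$ of the form
\begin{equation*}
\ddot W+a(t)\,\dot W+b(t)\,W=\mathcal{R}(t,W,\dot W),
\end{equation*}
where $a(t),b(t)$ converge to explicit constants $a_\infty,b_\infty$ depending only on $q$, while $\mathcal{R}$ is a higher-order perturbation. Crucially, the sharpness of \eqref{eq-asy} (with $\e$ chosen close to $1/2$) makes $\mathcal{R}(t,0,0)$ integrable on $[t_0,\infty)$.

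Second, I analyze the limit equation $\ddot W+a_\infty\dot W+b_\infty W=0$. A direct computation, together with Lemma \ref{lem2.6}, identifies the sign of its discriminant $a_\infty^2-4b_\infty$ with that of $q-2$, so for $1\le q<2$ the characteristic roots are complex conjugates with negative real part: the origin of the $(W,\dot W)$ plane is a stable spiral. Applying a Levinson-type asymptotic integration theorem (which is made possible by the $L^1$ control of $\mathcal{R}$ obtained above) together with a Prüfer phase-angle representation, I transfer this spiral structure from the constant-coefficient limit to every trajectory $W(\cdot;\alpha)$ with $\alpha$ sufficiently large: the phase angle $\theta(t;\alpha)$ satisfies $\theta(t;\alpha)\to\infty$ as $t\to\infty$ for each such $\alpha$, with the total winding number $N(\alpha)$ tending to infinity as $\alpha\to\infty$.

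Finally I translate the spiraling back to the bifurcation curve. Each half-rotation of $\theta(\cdot;\alpha)$ corresponds to a sign change of $u(r,\alpha)-U_\infty(r)$ on some shrinking interval, and a standard continuation argument (using that $u(\cdot,\alpha)\to U_\infty$ locally uniformly outside the origin and that $U_\infty$ crosses zero transversally at $r_0$) converts these sign changes into sign changes of $r_0(\alpha)-r_0$, and hence of $\lambda(\alpha)-\lambda_\infty$; each sign change along the $C^1$ curve $\alpha\mapsto\lambda(\alpha)$ produces a turning point, yielding infinitely many turning points clustered at $\lambda_\infty$, and the intermediate value theorem applied to adjacent oscillations gives infinitely many solutions at $\lambda=\lambda_\infty$. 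The main obstacle is the matching of $\Psi$ with \eqref{eq-asy} accurately enough that $\mathcal{R}$ is integrable at $t=\infty$: this is precisely the place where the refined second-order asymptotic term $\log qH(g^{-1}(-2\log r))$ of Theorem \ref{thm-sing}, absent from the cruder expansion in \cite{MR4876902}, is indispensable, because without it the perturbation $\mathcal{R}$ would be of the same order as the oscillatory part and could swamp the spiral behavior predicted by the linearization.
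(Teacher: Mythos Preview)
Your proposal has two genuine gaps. First, the claim that the linearized limit system has a \emph{stable spiral} at the origin is incorrect. Carrying out the generalized Emden transformation as in Section~\ref{sec-emden} (see \eqref{eq-y104} and the change of time $d\rho=L^{1/2}\,dt$), the damping coefficient in front of $\dot W$ is of order $L^{-1/2}\sim g^{-1/2}\to 0$, so the limiting linear equation is $\ddot W+\tfrac{1}{q}W=0$, a \emph{center} with purely imaginary eigenvalues; there is no discriminant whose sign is that of $q-2$. Lemma~\ref{lem2.6} has nothing to do with eigenvalues: it only converts $q<2$ into the supercriticality condition \eqref{supercritical}. Second, even if one had some oscillation for the limit equation near $W=0$, regular solutions $u(\cdot,\alpha)$ do not stay in that regime: since $u(0,\alpha)=\alpha$ is finite while $\Psi(t)\to\infty$, one has $W(t;\alpha)\to-\infty$ as $t\to\infty$, so the trajectory leaves every neighborhood of the origin and no linearization/Levinson/Pr\"ufer argument around $W=0$ controls its winding number. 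The oscillation count you need is a \emph{global} feature that the asymptotically autonomous picture does not deliver in two dimensions (this is exactly the obstruction flagged in Remark~1.1(5)).

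The paper proceeds quite differently. The role of $q<2$ is to invoke, via Lemma~\ref{lem2.6}, the supercriticality result of \cite{kuma2025}, which yields a sequence $\alpha_m\to\infty$ with $v(\cdot,\alpha_m)\to V_\infty$ in $C^2_{\mathrm{loc}}(0,\infty)$ for some singular solution $V_\infty$. The Emden-type transformation and the Lyapunov function $\mathcal{H}$ in Section~\ref{energysec} are used only to obtain the a priori lower bound in Theorem~\ref{thm4-1}(ii) for $\mathcal{F}(V_\infty)+2\log r$; oscillation of the bifurcation curve is then deduced not from ODE spiraling but from the critical Hardy inequality (Proposition~\ref{critical prop}), the resulting instability of $U_\infty$ and $V_\infty$ on every small annulus (Lemma~\ref{unstable-singular}), and the intersection Lemma~\ref{unstablelem}, which together force $Z_{[0,r_0]}(U_\infty-v_m)\to\infty$; one then concludes via \cite[Lemma~5]{Mi2015}.
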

\begin{remark}
\rm{
\begin{enumerate}
        \item We do not assume the 
        analyticity on the nonlinearity 
        $f$. 
        \item 
        While we were preparing this paper, 
        Naimen 
        posted related papers~\cite{naimen2025-1, naimen2025-2} on arXiv. 
        In fact, once Theorem \ref{thm-sing} 
        is established and some additional conditions of $f$ are imposed 
        (see (H1) 
        (\textrm{ii}) 
        in \cite{naimen2025-2}), Theorem \ref{thm-bi} 
        follows by applying the result 
        of Naimen~\cite{naimen2025-2}.
        In this paper, we relax the condition of the nonlinearity and give a proof of Theorem 
        \ref{thm-bi} by 
        assuming only 
        (G1) and (G2). 
        Our approach is to study the behavior of \lq\lq general" 
        singular solutions to 
        \eqref{singulareq-expo} near the origin by means of a generalized Emden-type transformation, which plays an important role in the bifurcation analysis in the higher-dimensional case. 
        It seems that our approach is 
        rather simple and short. 
     \end{enumerate} 
     }
    \end{remark}
\begin{remark}[Strategy of the proof of Theorem \ref{thm-bi}]
 \rm{
As mentioned in Remark 1.1, the approach focusing on the standard difference between $U_{\infty}$ and other singular solutions $V_{\infty}$ does not allow us to study the behavior of singular solution in two dimensions. Instead, we find the suitable energy function $\mathcal{F}$ so that $\mathcal{F}(U_{\infty})+2\log r =o(1)$ as $r \to 0$ as constructed by Mcleod-Mcleod \cite{MR952325} 
for special nonlinearities. The function $w=h(V_{\infty})+2\log r$ measures the difference between the solution $U_{\infty}$ and $V_{\infty}$. Moreover, through the specific change of variables, $w$ satisfies a second-order differential equation that admits a Lyapunov function. Therefore, by the Lyapunov-functional method, we can estimate the behavior of the singular solution $V_{\infty}$ near $r=0$. As a result, we can obtain the 
interaction number between the singular 
solution $U_{\infty}$ and regular solutions to 
\eqref{singulareq-expo}; and prove the oscillation property of the bifurcation curve around $\lambda_{*}$. 
The authors believe that this approach plays 
a key role in order to study the 
uniqueness of the singular solution in two dimensions, 
which is a challenging problem as 
we mentioned above.

%The authors believe that this approach plays a key role in order to study the uniqueness of the singular solution, which is a challenging problem as we mentioned above.

%By using the energy functional $h$, we can estimate the relation between the solution $U_{\infty}$ to \eqref{singulareq-expo} given in Theorem \ref{thm-sing} and other general singular solutions $V_{\infty}$ by using the Lyapunov-functional method for the function $w=h(z_{\infty})-\rho$. This enable us to study the property of general singular solutions to \eqref{singulareq-expo}. More precisely, we can obtain the interaction number between the singular solution $U_{\infty}$ and regular solutions to \eqref{singulareq-expo}. 
%Although in the higher dimensional case ($d \geq 3$), we can find the relation directly and do not need the energy functional $h$, it seems difficult to study the relation in $d = 2$ by a similar argument to the higher dimension. The authors believe that this approach plays a key role in order to study the uniqueness of the singular solution, which is a challenging problem as we mentioned above.
}
\end{remark}

The rest of this paper is organized as 
follows. 
In Section \ref{sec-pre}, we prove several estimates on nonlinearity 
$f$, which are needed later. 
In Section \ref{sec-sing}, 
we construct a singular solution and 
give the proof of Theorem \ref{thm-sing}. 
In Section \ref{sec-emden}, 
we study several properties of 
singular solutions to \eqref{eq-expo} 
and obtain a result which play an important 
role for the proof of Theorem \ref{thm-bi}. 
Finally, in Section \ref{sec-bif}, 
we obtain an intersection result and 
prove Theorem \ref{thm-bi}.

\section{Preliminaries} \label{sec-pre}
In this section, we will show 
several nonlinear estimates, 
which are needed later 
and give some examples of 
the computation of $H(s)$ 
and $\frac{1}{q} = \lim_{s \to \infty} 
H(s)$. 
Let $\alpha \in \R$ and $\psi$ be a function such that 
$\lim_{s \to \infty} \psi(s) = \infty$. 
we say that 
$\varphi(s)=\psi^{\alpha+ o(1)}(s)$ for any 
$\varepsilon>0$, there exist $s_0$ such that 
$\psi^{\alpha - \varepsilon}(s) < \varphi(s) < 
\psi^{\alpha +\e}(s)$ for all $s>s_0$. 
From \cite[Lemma 2.1]{Mi2023}, we can easily obtain the following: 
\begin{lemma}\label{lem2-2}
Assume (G1) and (G2). 
Then, we have $q\ge 1$. Moreover,
$g(s)$ is strictly monotone increasing and  
convex for 
$s \gg 1$ satisfying 
$\lim_{s \to \infty}g(s) = \infty$. 
\end{lemma}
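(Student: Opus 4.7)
The plan is to derive all three conclusions (monotonicity, $q\ge 1$, convexity) from the single identity $(g/g')'(s)=1-H(s)$ together with the integrability condition in (G1). The starting observation is elementary: since $\int_{s}^{\infty}du/g(u)<\infty$ for all $s$ large, $g$ cannot be bounded above on $[s_0,\infty)$; otherwise $1/g$ would be bounded below by a positive constant and the integral would diverge.

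To obtain monotonicity I would note that the existence of the finite limit $1/q=\lim_{s\to\infty}g(s)g''(s)/(g'(s))^2$ forces $g'(s)\neq 0$ for all $s$ sufficiently large; by continuity of $g'\in C^4$ (via (G2)) the sign of $g'$ is then constant on some interval $(s_1,\infty)$. If $g'<0$ eventually, $g$ would be monotone decreasing and strictly positive, hence bounded, contradicting the unboundedness established above. Thus $g'>0$ for $s\gg 1$, so $g$ is strictly increasing, and combined with unboundedness one obtains $\lim_{s\to\infty}g(s)=\infty$.

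For $q\ge 1$ I would compute directly
\begin{equation*}
\left(\frac{g(s)}{g'(s)}\right)' \;=\; 1-\frac{g(s)g''(s)}{(g'(s))^2} \;=\; 1-H(s),
\end{equation*}
so $(g/g')'(s)\to 1-1/q$. Assume for contradiction $q<1$, i.e.\ $1-1/q<0$. Then $(g/g')'(s)\le (1-1/q)/2<0$ for $s$ large, and integration forces $g/g'\to-\infty$. This contradicts $g/g'>0$ from the previous step, so $q\ge 1$.

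Finally, for convexity, since $q\ge 1$ we have $H(s)\to 1/q\in[0,1]$. When $1/q>0$, we get $H(s)\ge 1/(2q)>0$ for $s\gg 1$, which immediately gives $g''(s)=H(s)(g'(s))^2/g(s)>0$. The borderline case $1/q=0$ (i.e.\ $q=\infty$) is more delicate; here one exploits the derivative bound $|H'(s)|\le Cg'(s)/g(s)$ from (G2), which controls the oscillation of $H$ on scales over which $g$ itself varies appreciably, to exclude $H(s)<0$ occurring infinitely often. This borderline case is the main obstacle: the bare convergence $H\to 0$ does not by itself pin down the sign of $g''$, and the derivative bounds of (G2), combined with $\int^\infty du/g(u)<\infty$, are precisely what propagate sign information across scales of $s$ to yield eventual convexity.
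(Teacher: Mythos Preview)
The paper does not give its own proof of this lemma; it simply cites \cite[Lemma~2.1]{Mi2023}. Your self-contained argument via the identity $(g/g')'=1-H$ is natural and essentially the standard route, and your treatment of unboundedness, eventual monotonicity, and $\lim_{s\to\infty}g(s)=\infty$ is correct.

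There are, however, two gaps. First, the step ``$q<1$, i.e.\ $1-1/q<0$'' presupposes $q>0$. If $\lim H(s)<0$ (so $q<0$), then $1-1/q>0$ and your contradiction argument does not apply. This case is easy to dispose of directly: if $\lim H(s)<0$ then $H(s)<0$, hence $g''(s)=H(s)\,(g'(s))^2/g(s)<0$, for all large $s$; thus $g'$ is eventually positive and decreasing, so $g(s)\le g(s_1)+g'(s_1)(s-s_1)$ grows at most linearly and $\int^{\infty}du/g(u)=\infty$, contradicting (G1). You should insert this before claiming $q\ge 1$.

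Second, your treatment of the borderline case $1/q=0$ is only a sketch. The bound $|H'|\le C\,g'/g$ from (G2) integrates to $|H(s)-H(s_0)|\le C\log(g(s)/g(s_0))$, which blows up as $s\to\infty$ and therefore does not by itself prevent $H$ from dipping below zero on arbitrarily large scales; one can write down oscillations of the form $H(s)\sim(\log g(s))^{-1}\sin(\log\log g(s))$ that satisfy $|H'|\le C\,g'/g$ and change sign infinitely often. So the assertion that (G2) and integrability ``propagate sign information'' needs an actual argument, not just a claim. In practice the paper's later estimates (e.g.\ Lemma~\ref{lem3-2}) only use the cases $q=1$ and $q>1$, so $1/q>0$ is implicitly in force throughout and eventual convexity follows immediately from $H(s)\to 1/q>0$; but if you want the lemma as stated, this borderline case requires either a genuine argument or an explicit exclusion.
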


Then, by the L’Hopital rule, we obtain the 
following lemma:
\begin{lemma}\label{lem2-1}
Assume (G1) and (G2). 
The followings hold: 
\begin{equation} \label{eq-y13}
    \frac{g(s)}{sg'(s)}=\frac{g(s)/g'(s)}{s}=1-\frac{1}{q}+o(1) \hspace{4mm}\text{as $s \to \infty$},
\end{equation}
\begin{equation} \label{eq-y41}
    \frac{\log s}{\log g(s)}=\dfrac{1/s}{g'(s)/g(s)}
    = \frac{g(s)}{s g'(s)}
    =1-\frac{1}{q}+o(1) \hspace{4mm}\text{as $s \to \infty$},
\end{equation}
\begin{equation}
    \label{answer-tec0}
    \frac{\log g(s)}{\log g'(s)}=\frac{g'(s)/g(s)}{g''(s)/g'(s)}= 
    q + o(1) \hspace{4mm}\text{as 
    $s \to \infty$}.
\end{equation}
Moreover, in the case of 
$q > 1$, for any $\e>0$, there exists $s_0>0$ such that
\begin{equation} \label{eq-y29}
    \left(\frac{s}{g(s)^{1-\frac{1}{q}+\varepsilon}}\right)'=g^{-2+\frac{1}{q}-\e}(s)(g(s)-(1-\frac{1}{q}+\e)g'(s)s)\le 0\quad\text{for any $s>s_0$.}
\end{equation}
\end{lemma}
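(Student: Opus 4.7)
The plan is to extract all four statements from the single defining condition $H(s)\to 1/q$, using (a) a direct ODE identity to obtain \eqref{eq-y13} unconditionally and (b) L'Hopital's rule plus \eqref{eq-y13} to deliver the remaining two limits; the final monotonicity statement \eqref{eq-y29} is then a one-line consequence of \eqref{eq-y13}. Alternatively, one could simply apply \cite[Lemma~2.1]{Mi2023} with $g$ in place of $f$, but spelling out the computation is straightforward.

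For \eqref{eq-y13}, set $\phi(s):=g(s)/(sg'(s))$. A short differentiation and rearrangement yields the identity
\[
(s\phi(s))' \;=\; 1-H(s).
\]
Integrating from some $s_{0}$ large enough that $g,g'>0$ on $[s_{0},\infty)$ (possible by Lemma \ref{lem2-2}) and dividing by $s$ gives
\[
\phi(s) \;=\; \frac{s_{0}\phi(s_{0})}{s} \;+\; \frac{1}{s}\int_{s_{0}}^{s}\bigl(1-H(t)\bigr)\,dt \;\longrightarrow\; 1-\frac{1}{q},
\]
by the Cesaro-Stolz theorem and $H(t)\to 1/q$. This bypasses the circular-limit worry inherent to a naive L'Hopital approach on $\phi$ itself.

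For \eqref{eq-y41}, I would apply L'Hopital to $\log s/\log g(s)$: both numerator and denominator tend to $\infty$ and the denominator is strictly monotone for $s$ large (Lemma \ref{lem2-2}), so the ratio equals $\lim (1/s)/(g'(s)/g(s))=\lim g(s)/(sg'(s))$, which is $1-1/q$ by \eqref{eq-y13}. For \eqref{answer-tec0}, the same rule applied to $\log g(s)/\log g'(s)$ produces $(g'/g)/(g''/g') = (g'(s))^{2}/(g(s)g''(s))=1/H(s)\to q$; here I would verify beforehand that $g'(s)\to\infty$, which follows from convexity of $g$ for $s\gg 1$ (Lemma \ref{lem2-2}) together with $g''=H(g')^{2}/g$ and $g\to\infty$, ruling out a bounded $g'$.

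For \eqref{eq-y29}, a direct differentiation of $s/g(s)^{\alpha}$ with $\alpha:=1-1/q+\e$ gives
\[
\left(\frac{s}{g(s)^{\alpha}}\right)' \;=\; \frac{g(s)-\alpha s g'(s)}{g(s)^{\alpha+1}},
\]
whose sign is governed by the bracket since $g>0$ for $s$ large. By \eqref{eq-y13}, $g(s)/(sg'(s))=1-1/q+o(1)<\alpha$ for all $s$ sufficiently large, so $g(s)-\alpha sg'(s)<0$ there, yielding the claimed inequality. The only nontrivial step in the whole proof is spotting the clean ODE $(s\phi)'=1-H$; once that is in hand, the rest is essentially bookkeeping.
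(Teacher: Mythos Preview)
Your proof is correct and follows the same route as the paper, which simply says ``by the L'Hopital rule'' and leaves the details to the reader; your integration identity $(g/g')'=1-H$ for \eqref{eq-y13} is exactly the computation underlying the L'Hopital step written out, and your treatments of \eqref{eq-y41}, \eqref{answer-tec0}, and \eqref{eq-y29} match the intended arguments. Your verification that $g'\to\infty$ (needed for \eqref{answer-tec0}) is valid once unpacked: bounded $g'$ forces $g(s)\le Cs$, whence $g''=H(g')^2/g\ge c/s$ and integration contradicts boundedness.
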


%\begin{lemma}
%
%\end{lemma}
 %   \begin{proof}
  %  It follows from the assumption 
%    (G2) that 
%        \[
%        \frac{g(s)g''(s)}{(g'(s))^{2}}
%        = \frac{1}{q} + o(1). 
%        \]
%    we have 
%        \[
%        \frac{g(s)}{g'(s)} 
%        = (\frac{1}{q} + o(1)) \frac{g'(s)}%{g''(s)}. 
%        \]
 %   Namely, we obtain 
  %  $(\log g(s))' = (\frac{1}{q} + o(1))
%        (\log g'(s))'$. This implies that 
%        \[
%        \log g(s) = (\frac{1}{q} + o(1))
%        \log g'(s) + C. 
%        \]
%    From this, one has 
%        \[
%%        g'(s) = e^{\frac{1}{\frac{1}{q} + o(1)}
%        \log g(s) + C}  
%        = C g(s)^{\frac{1}{\frac{1}{q} + o(1)}}. 
%        \]
%This can be written by 
%    \[
%    g'(s) = C g(s)^{\frac{1}{\frac{1}{q} + o(1)}}. 
%    \]
%From the above differential equation, 
%we see that 
%    \[
%    g(s) = (C_{1} s + C_{2})
%    \]
%    \end{proof}

%\begin{lemma}\label{lem2-3}
%There exists a sufficiently large $k \in %\N$ 
%such that 
%    \begin{equation}\label{eq2-1}
%    \frac{g'(s)}{g(s)} \leq (\log %g(s))^{k}
%    \end{equation}
%\end{lemma}

%\textcolor{}{Perhaps we can omit the condition \eqref{eq2-1}}

\begin{lemma} \label{lem2-3} 
Assume that (G1) and (G2). 
We take $0<t<1$ arbitrarily and fix it. 
Then, we have 
\begin{equation}
    \label{tec2}
    g(t \rho) \le t g(\rho)
\end{equation}
for sufficiently large $\rho > 0$. 
\end{lemma}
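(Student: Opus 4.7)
The plan is to show that $g(s)/s$ is eventually strictly increasing, say on some half-line $[s_{0},\infty)$, from which the lemma follows at once. Indeed, fixing $t\in(0,1)$ and taking $\rho$ large enough that $t\rho>s_{0}$, the monotonicity of $g(s)/s$ gives
\[
\frac{g(t\rho)}{t\rho}\le \frac{g(\rho)}{\rho},
\]
which upon multiplying by $t\rho$ is exactly $g(t\rho)\le t g(\rho)$.

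To verify the monotonicity, I would differentiate:
\[
\left(\frac{g(s)}{s}\right)' = \frac{sg'(s) - g(s)}{s^{2}},
\]
so it suffices to prove that $sg'(s)>g(s)$, or equivalently $g(s)/(sg'(s))<1$, for all sufficiently large $s$. This is immediate from Lemma \ref{lem2-1}: equation \eqref{eq-y13} gives
\[
\frac{g(s)}{sg'(s)} = 1 - \frac{1}{q} + o(1) \quad \text{as $s\to\infty$},
\]
and since Lemma \ref{lem2-2} ensures $q\ge 1$, the leading value $1-1/q$ lies in $[0,1)$ for $q$ finite. Hence $g(s)/(sg'(s))$ is bounded above by a constant strictly less than $1$ for $s$ large, which is exactly what is needed; the strict inequality $(g(s)/s)'>0$ follows, so $g(s)/s$ is strictly increasing on $(s_{0},\infty)$ for some $s_{0}$ depending only on $f$.

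I do not anticipate any substantive obstacle; the entire argument is really a corollary of the asymptotic \eqref{eq-y13} together with Lemma \ref{lem2-2}. The only conceivable subtlety is the degenerate case $q=\infty$, in which $1-1/q=1$ and the bound above fails; however, this case does not intervene in the framework of the paper (as witnessed, for instance, by the explicit term $\log q$ appearing in Theorem \ref{thm-sing}), so no additional argument is needed.
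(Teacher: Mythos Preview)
Your proof is correct and essentially identical to the paper's: both reduce the claim to the eventual monotonicity of $g(s)/s$, which follows from \eqref{eq-y13} together with $q\ge 1$. The paper phrases this by fixing $\rho$ and differentiating $\varphi(s)=g(s\rho)/s$ on $[t,1]$, but since $\varphi(s)=\rho\cdot g(s\rho)/(s\rho)$ this is the same computation as yours up to a change of variable.
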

\begin{proof}
For each $s > t$, we put 
    \[
    \varphi(s) := \frac{g(s \rho)}{s}. 
    \]
    We see from \eqref{eq-y13} and $q \geq 1$ 
    that
    \[
    \varphi'(s) 
    = \frac{g'(s \rho) s \rho 
    - g(s \rho)}{s^{2}} 
    \geq 
    \frac{1}{2q - 1} \frac{g(s \rho)}{s^2} > 0
    \]
for $t \leq s \leq 1$. 
This implies that 
    \[
    \frac{g(t \rho)}{t} = \varphi(t) 
    \leq \varphi(1) = g(\rho).
    \]
Thus, we obtain the claim. 
\end{proof}
\begin{lemma} \label{lem3-2}
Assume that (G1) and (G2). 
In the case of  
$q > 1$, we obtain
\begin{equation}\label{eq-y52}
    g(s)=s^{\frac{1}{1-\frac{1}{q}+o(1)}} \hspace{4mm}\text{as $s\to\infty$}.
\end{equation}
In the case $q=1$, for any $M>0$, there exists $s_0$ such that 
\begin{equation}\label{eq-y52-1}
    g(s)>s^{M} \hspace{4mm}\text{for all $s>s_0$}.
\end{equation}
In addition, for any $q \geq 1$, it follows that 
\begin{equation}\label{eq-y52-2}
\frac{g(s)}{g'(s)}
=
\begin{cases}
(1 - \frac{1}{q} +o(1)) g(s)^{1-\frac{1}{q} +o(1)}
& \qquad \mbox{if $q > 1$,} \\
g(s)^{1-\frac{1}{q} +o(1)} 
& \qquad \mbox{if $q = 1$}
\end{cases}
\qquad\mbox{as $s \to \infty$}. 
\end{equation}
\end{lemma}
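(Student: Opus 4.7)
The three claims are all asymptotic rates of $g$ and $g/g'$, and the strategy is to read them off from Lemma~\ref{lem2-1} after taking logarithms and rearranging. The plan is to handle \eqref{eq-y52} and \eqref{eq-y52-1} first by pure manipulation of \eqref{eq-y41}, and then derive \eqref{eq-y52-2} from these together with \eqref{eq-y13} in the case $q>1$ and from \eqref{answer-tec0} in the case $q=1$.

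First, I would start from the identity
\[
\frac{\log s}{\log g(s)} = 1 - \frac{1}{q} + o(1) \qquad \text{as } s \to \infty,
\]
which is \eqref{eq-y41}. When $q>1$, the right-hand side is bounded away from $0$, so I may invert and multiply by $\log s$ to obtain $\log g(s) = \frac{\log s}{1 - 1/q + o(1)}$, which is exactly the meaning of $g(s) = s^{\frac{1}{1-1/q+o(1)}}$ according to the convention declared just above Lemma~\ref{lem2-2}; this is \eqref{eq-y52}. When $q = 1$, the right-hand side tends to $0$ through positive values (by Lemma~\ref{lem2-2}, $g$ and hence $\log g(s)$ diverge), so $\log g(s)/\log s \to \infty$, and for any $M>0$ I can pick $s_0$ large enough that $\log g(s) > M \log s$ for $s>s_0$, giving \eqref{eq-y52-1}.

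For \eqref{eq-y52-2} in the case $q>1$, I would combine \eqref{eq-y13}, which yields $\frac{g(s)}{g'(s)} = (1 - \frac{1}{q} + o(1))\,s$, with the expression $s = g(s)^{1-\frac{1}{q}+o(1)}$ obtained by inverting \eqref{eq-y52}; substituting the latter into the former produces the claimed identity. In the case $q=1$, \eqref{eq-y13} only yields $g/g' = o(s)$, which is not tight enough, so I would instead use \eqref{answer-tec0} in the form $\log g'(s) = \log g(s)\,(\frac{1}{q} + o(1)) = \log g(s)\,(1 + o(1))$, whence
\[
\log\!\Bigl(\frac{g(s)}{g'(s)}\Bigr) = \log g(s) - \log g'(s) = o(\log g(s)),
\]
which is exactly $g(s)/g'(s) = g(s)^{o(1)} = g(s)^{1 - 1/q + o(1)}$.

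I do not expect any serious obstacle. The only points that need care are (i) verifying that the convention $\psi^{\alpha + o(1)}$ is correctly matched when the $o(1)$ sits inside a denominator (so I must check the exponent is indeed of the form $\frac{1}{1-1/q} + o(1)$ after algebraic simplification), and (ii) in the $q=1$ case of \eqref{eq-y52-2}, making sure the two-sided bound $g^{-\varepsilon} < g/g' < g^{\varepsilon}$ is extracted cleanly from the logarithmic asymptotic, which is straightforward once the $o(\log g(s))$ control above is in hand.
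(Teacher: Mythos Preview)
Your proposal is correct and follows essentially the same route as the paper's own proof: both derive \eqref{eq-y52} and \eqref{eq-y52-1} directly from \eqref{eq-y41}, obtain \eqref{eq-y52-2} for $q>1$ by combining \eqref{eq-y13} with the inversion $s = g(s)^{1-1/q+o(1)}$, and handle the $q=1$ case of \eqref{eq-y52-2} via \eqref{answer-tec0} to get $g'(s)=g(s)^{1+o(1)}$. Your write-up is in fact more detailed than the paper's, which dispatches the lemma in three sentences.
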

\begin{proof}
We first remark that \eqref{eq-y52} and \eqref{eq-y52-1}
follow from \eqref{eq-y41}. When $q > 1$, since \eqref{eq-y52} can be rewritten by $s=g(s)^{1-\frac{1}{q}+o(1)}$ as $s\to\infty$, \eqref{eq-y52-2} follows from \eqref{eq-y13}. When $q=1$, we deduce from \eqref{answer-tec0} that $g'(s)=g(s)^{1+o(1)}$ as $s\to\infty$. Thus, we have \eqref{eq-y52-2}.
\end{proof}
\begin{lemma}\label{lem2.6}
Assume that (G1) and (G2). 
We obtain that 
\begin{equation*}%\label{F-eq}
\lim_{s \to\infty}(\frac{F(s) 
\log F(s)}{f(s)})'=1-\frac{1}{q}, \hspace{2mm}\text{where}\hspace{2mm} F(s)=\int_{0}^{s}f(u)\,du.
\end{equation*}
In particular, if $1\le q< 2$, we have the following supercritical condition
\begin{equation}\label{supercritical}
    \limsup_{s \to\infty}(\frac{F(s) 
\log F(s)}{f(s)})'<\frac{1}{2}.
\end{equation}
\end{lemma}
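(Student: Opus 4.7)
The plan rests on a clean algebraic reformulation. Setting $G(s) := \log F(s)$, we have $G'(s) = f(s)/F(s)$, so
$$\frac{F(s)\log F(s)}{f(s)} = \frac{G(s)}{G'(s)},$$
and differentiation gives
$$\left(\frac{F(s)\log F(s)}{f(s)}\right)' = 1 - \frac{G(s)G''(s)}{G'(s)^2} = 1 - H[G](s),$$
where $H[G]$ is defined in analogy with $H = H[g]$. The claim therefore reduces to showing $H[G](s) \to 1/q$ as $s \to \infty$; the supercritical inequality then follows immediately, since $1 - 1/q < 1/2$ whenever $1 \leq q < 2$ and the limsup coincides with the limit.

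The key input will be the two-term asymptotic
$$F(s) = \frac{f(s)}{g'(s)}\left(1 + \frac{1}{qg(s)} + o\!\left(\frac{1}{g(s)}\right)\right) \qquad\text{as } s\to\infty.$$
I would derive this by integrating by parts using the identity $f = (e^g)'/g'$, which yields
$$\int f \, du = \frac{f}{g'} + \int f\cdot\frac{H(u)}{g(u)}\,du.$$
Since $H(u) \to 1/q$ by (G1), splitting $H = 1/q + (H - 1/q)$ and applying a second integration by parts to $\int f/g \, du$ (which gives $\int f/g \, du \sim f/(gg')$) produces the next-order term. The remainder integral $\int f(u)[H(u) - 1/q]/g(u)\,du$ is controlled by one more integration by parts: the boundary contribution is $f(s)(H(s)-1/q)/(g(s)g'(s)) = o(f/(gg'))$, while the bulk term is estimated by (G2), which bounds $|H'|$ by $C g'/g$.

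With the expansion in hand, setting $\Phi(s) := F(s)g'(s)/f(s)$, a direct computation from $G' = f/F$ gives $G'' = G'(g' - G')$, and hence
$$H[G] = \frac{G(g' - G')}{G'} = G\,(\Phi - 1).$$
The expansion produces $\Phi - 1 = 1/(qg(s)) + o(1/g(s))$, while taking logarithms gives $G(s) = g(s) - \log g'(s) + O(1/g(s)) = g(s)(1 + o(1))$, using \eqref{answer-tec0} which forces $\log g'(s) = O(\log g(s)) = o(g(s))$. Multiplying, $H[G](s) \to 1/q$, which proves the lemma.

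The main obstacle is producing the \emph{second-order} term $1/(qg)$ in the expansion of $F$. The leading-order statement $F \sim f/g'$ follows quickly from a single integration by parts and $g' \to \infty$, but extracting the correct coefficient at the next order requires exploiting the rate of convergence $H \to 1/q$. This is where (G2) is essential: without the derivative bounds it supplies, one cannot rule out that the error integral $\int f(u)[H(u) - 1/q]/g(u)\,du$ contaminates the coefficient of $1/g$, and then $H[G]$ would not converge to $1/q$.
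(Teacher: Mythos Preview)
Your proposal is correct and follows essentially the same route as the paper: both arguments rest on the integration-by-parts expansion $F(s)=\dfrac{f(s)}{g'(s)}\bigl(1+\dfrac{H(s)}{g(s)}+o(g(s)^{-1})\bigr)$ obtained from $f=(e^g)'/g'$ together with the (G2) bound on $H'$, and then compute the derivative via the identity $(F\log F/f)'=1+\log F\,(1-Ff'/f^2)$, which is exactly your $1-H[G]=1-G(\Phi-1)$ in different notation. Your framing through $H[G]$ is a clean way to organize the computation, but the underlying mechanism is the same.
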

\begin{proof}
We fix $\e>0$. We confirm from (G2) that
\begin{equation}\label{H-eq1}
    |(\frac{H(s)}{g(s)g'(s)})'| 
    =|
    \frac{H'(s)}{g(s)g'(s)} - 
    \frac{H(s)}{g^2(s)} - \frac{H^2(s)}
    {g^2(s)}|\le C|\frac{H(s)}
    {g^2(s)}|=o(H(s)/g(s)) 
\end{equation}
as $s \to\infty$. Therefore, since
\begin{equation*}
\int_{s_{0}}^{s} e^{g(u)} 
\frac{H(u)}{g(u)}\,du = 
e^{g(s)}\frac{H(s)}{g(s)g'(s)} - C(s_0)
-\int_{s_0}^{s}e^{g(s)}(\frac{H(u)}
{g(u)g'(u)})'\,du,
\end{equation*}
it follows from \eqref{H-eq1} that there exists a large $s_0$ depending only on $\e$ so that for $s>s_0$, we have
\begin{equation} \label{H-eq2}
  e^{g(s)}\frac{H(s)}{g(s)g'(s)}-C(s_0)\le \int_{s_{0}}^{s} e^{g(u)} 
    \frac{H(u)}{g(u)}\,du \le (1+\e) e^{g(s)}\frac{H(s)}{g(s)g'(s)}-C(s_0).
\end{equation}
Here, it follows from $g(s) = 
\log f(s)$, integral by parts that
\begin{align*}
F(s)=C(s_0)+\int_{s_{0}}^{s} e^{g(u)}\,du 
= \frac{e^{g(s)}}{g'(s)}+ 
\int_{s_{0}}^{s} e^{g(u)} \frac{H(u)}{g(u)}\, du 
+ C(s_0).
\end{align*}
Therefore, by using \eqref{H-eq2}, we have
\begin{align*}
  \frac{e^{g(s)}}{g'(s)}\left(1 
+ \frac{H(s)}{g(s)}+O(g'(s) e^{- g(s)}) \right)\le F(s)\le \frac{e^{g(s)}}{g'(s)}\left(1 
+ (1+\e )\frac{H(s)}{g(s)}+O(g'(s) e^{- g(s)}) \right) \qquad\text{as $s \to\infty$}.
\end{align*}
By the above computation,
\begin{equation*}
     \frac{F(s)f'(s)}{f^2(s)} 
    = \frac{e^{g(s)}g'(s)F(s)}{e^{2g(s)}}=e^{-g(s)}g'(s)F(s)
\end{equation*}
and
\begin{equation*}
\log F(s) 
    = g(s) - \log g'(s) + o(1) 
    = g(s) \left(1 
    + o(1) \right),  
\end{equation*}
we see that
    \[
     -(1+\e+o(1))H(s)-O(g'(s)e^{-g(s)}g(s))\le \log F(s)(1-\frac{F(s)f'(s)}{f^2(s)}) 
    \le -(1 +o(1))H(s)-O(g'(s)g(s) e^{- g(s)})
    \]
as $s \to\infty$. Since 
\begin{equation*}
     (\frac{F(s)\log F(s)}{f(s)})' 
    = 1+(1-\frac{F(s)f'(s)}{f^2(s)}) \log F(s) 
\end{equation*}
and $\e>0$ is arbitrary, we obtain the result.
\end{proof}
\begin{lemma}\label{lem2.7}
Assume that (G1) and (G2). 
Then, there exists $C > 0$ such that 
\begin{equation*}
    |g'''(s)|\le C 
    \frac{g'^3(s)}{g^2(s)}
\end{equation*}
for $s > s_{0}$. 
\end{lemma}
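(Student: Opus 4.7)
The plan is to exploit the defining relation $H(s) = g(s)g''(s)/(g'(s))^2$, solve it for $g''$, and then differentiate once more to get an explicit expression for $g'''$ in terms of $g, g', H$ and $H'$. The bound on $g'''$ will then follow immediately from the hypothesis (G2) on $H'$ together with the boundedness of $H$ that comes from (G1).

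More concretely, I would first note that by Lemma \ref{lem2-2} we have $g(s) > 0$ and $g'(s) > 0$ for $s$ sufficiently large, so there is no issue with dividing by $g$ or $g'$. Solving the identity $H = g g''/(g')^{2}$ for $g''$ gives
\[
g''(s) = \frac{H(s)(g'(s))^{2}}{g(s)}.
\]
Differentiating this relation with respect to $s$, one obtains
\[
g'''(s) = \frac{H'(s)(g'(s))^{2}}{g(s)} + \frac{2H(s)g'(s)g''(s)}{g(s)} - \frac{H(s)(g'(s))^{3}}{g^{2}(s)}.
\]
Substituting the expression for $g''$ into the middle term and collecting, this simplifies to
\[
g'''(s) = \frac{H'(s)(g'(s))^{2}}{g(s)} + \bigl(2H^{2}(s) - H(s)\bigr)\frac{(g'(s))^{3}}{g^{2}(s)}.
\]

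From (G1) we know that $H(s) \to 1/q$ as $s \to \infty$, so $H$, and hence $2H^{2} - H$, is bounded on $(s_{0},\infty)$ for $s_{0}$ sufficiently large. For the remaining term, applying the case $k = 1$ of (G2) gives $|H'(s)| \le C g'(s)/g(s)$, so
\[
\left| \frac{H'(s)(g'(s))^{2}}{g(s)} \right| \le C\,\frac{(g'(s))^{3}}{g^{2}(s)}.
\]
Combining these two estimates yields the desired bound $|g'''(s)| \le C(g'(s))^{3}/g^{2}(s)$ for $s > s_{0}$.

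This argument is really just a direct computation, so I do not anticipate a genuine obstacle; the only point to be careful about is making sure all denominators ($g$ and $g'$) are nonvanishing, which is precisely what Lemma \ref{lem2-2} gives. It is worth emphasizing the pattern: the hypothesis in (G2) that $|H^{(k)}| \le C(g'/g)^{k}$ is tailor-made to produce exactly the powers of $g'/g$ that appear when one differentiates the relation $g'' = H(g')^{2}/g$ repeatedly, so the same technique would equally give bounds on $g^{(4)}$ and $g^{(5)}$ from the cases $k = 2, 3$ of (G2) if they were needed later.
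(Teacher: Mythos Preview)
Your proof is correct and takes essentially the same approach as the paper: both compute $g'''$ by differentiating the relation $H = g g''/(g')^2$ (you differentiate the solved form $g'' = H(g')^2/g$, the paper differentiates $H$ and then isolates $g'''$), and both conclude using boundedness of $H$ from (G1) together with the $k=1$ case of (G2). The two computations are algebraically equivalent.
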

\begin{proof}
By a direct computation, we have
\begin{equation*}
    H'(s)=(\frac{g(s)g''(s)}{g'^2(s)})'=\frac{g'''(s)g(s)}{g'^2(s)}+\frac{g''(s)}{g'(s)}
    -2\frac{g(s)g''^2(s)}{g'^3(s)}.
\end{equation*}
Thus, by the assumption (G2), 
we obtain 
\begin{align*}
|g'''(s)|\le \frac{g'(s)g''(s)}
{g}+2\frac{g''^2(s)}
{g'(s)}+\frac{g'^2(s)}{g(s)}|H'(s)| 
\le 
\frac{H(s)g'^3(s)}
{g^2(s)}+2\frac{H^2(s)g'^3(s)}
{g^2(s)}+C\frac{g'^3(s)}{g^2(s)} 
\le C\frac{g'^3(s)}{g^2(s)}. 
\end{align*}
This completes the proof. 
\end{proof}

\section{Existence of Singular solutions}
\label{sec-sing}
In this section, we give the proof of Theorem 
\ref{thm-sing}. 
We pay our attention to a radially symmetric solution 
to \eqref{eq-expo}. 
Then, the equation in \eqref{eq-expo} is 
reduced to the following ordinary differential equations: 
\begin{equation} \label{eq-me-o}
- \frac{d^{2} u}{d r^{2}} - 
\frac{1}{r}\frac{d u}{d r} = f(u). 
\end{equation}
Here, we employ the following Emden-Fowler transformation: 
	\[
	y(\rho) = u(r), \hspace{4mm} \rho = -2 \log r. 
	\]
%Note that $r = e^{- \frac{\rho}{2}}$. 
Then, we see that
the equation in \eqref{eq-me-o} is 
equivalent to the following: 
	\begin{equation} \label{ODE} 
	- \frac{d^{2} y}{d \rho^{2}} 
	= \frac{1}{4} f(y) e^{-\rho}. 
	\end{equation}
We will show the following: 
\begin{proposition}\label{prop-es}
There exists $\Lambda_{\infty} > 0$ 
and a solution $y_{\infty}(\rho)$ to \eqref{ODE} 
for $\rho \in [\Lambda_{\infty}, \infty)$ satisfying 
\begin{align} \label{eq-yhonto}
    y_{\infty} 
    = g^{-1}(\rho)+\frac{1}{g'(g^{-1}(\rho))}\left(-2\log \rho-\log \frac{g'(g^{-1}(\rho))}{\rho}+\log H(g^{-1}(\rho))+\log 4\right)+o\left(\frac{1}{\rho^{1-\e}g'(g^{-1}(\rho))}\right) 
\end{align}
as $\rho \to \infty$, 
where $g^{-1}$ is the inverse function of $g$ and $\e\in (0,\frac{1}{2}]$ is an arbitrary number. Moreover, the solution of \eqref{ODE} satisfying \eqref{eq-yhonto} with $\e=\frac{1}{2}$ is unique.
\end{proposition}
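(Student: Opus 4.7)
The plan is to construct $y_\infty$ as a fixed-point perturbation of an explicit approximate solution $y_a$, by a contraction-mapping argument. Write $\sigma(\rho) := g^{-1}(\rho)$, which is well-defined for large $\rho$ by Lemma~\ref{lem2-2}. The heuristic for the two explicit terms in \eqref{eq-yhonto} is as follows: setting $z(\rho) := g(y(\rho)) - \rho$, equation \eqref{ODE} rewrites as
\[
\frac{g'(y)}{4}\, e^z \;=\; \frac{H(y)(1+z')^2}{\rho+z} \;-\; \frac{z''}{g'(y)},
\]
and the dominant balance yields $e^z \sim 4H(\sigma)/(\rho\, g'(\sigma))$, i.e.\ $z\sim\xi_1(\rho) := \log(4H(\sigma)/(\rho\, g'(\sigma)))$. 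Inverting $g$ then reproduces the two explicit terms in \eqref{eq-yhonto}.

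The difficulty is that the elementary ansatz $g^{-1}(\rho+\xi_1)$ alone produces a defect $R_0 := -y_a'' - \tfrac{1}{4} f(y_a) e^{-\rho}$ of order $(\log\rho)/(\rho^2 g'(\sigma))$, whose double integration against $(s-\rho)$ is too large by a polynomial factor in $\sigma$ to fit within the target error $o(\rho^{\e-1}/g'(\sigma))$. I therefore take the refined ansatz $y_a(\rho) := g^{-1}(\rho + \xi_1(\rho) + \xi_2(\rho))$, where $\xi_2(\rho) = O(\log\rho/\rho)$ is the explicit third-order correction chosen to cancel the first subleading defect in the balance; its form is extracted from $\xi_1'$, $H'(\sigma)$, and $g''(\sigma)$. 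Using (G2) to bound $H', H''$ and Lemma~\ref{lem2.7} to bound $g'''$, a second-order Taylor expansion of $g(y_a)$ about $\sigma$ then yields
\[
|R_0(\rho)| \;=\; O\!\left(\frac{(\log\rho)^2}{\rho^3\, g'(\sigma)}\right).
\]
Since $\xi_2/g'(\sigma) = O(\log\rho/(\rho\, g'(\sigma))) = o(\rho^{\e-1}/g'(\sigma))$ for every $\e\in(0,\tfrac{1}{2}]$, this extra correction is absorbed into the $o(\cdot)$ remainder of \eqref{eq-yhonto}.

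Writing $y = y_a + w$ in \eqref{ODE} and imposing $w, w' \to 0$ as $\rho \to \infty$, the equation recasts as the fixed-point problem
\[
w(\rho) \;=\; T(w)(\rho) \;:=\; \int_\rho^{\infty} (s-\rho) \Bigl[R_0(s) - \tfrac{1}{4}\bigl(f(y_a(s)+w(s)) - f(y_a(s))\bigr) e^{-s}\Bigr]\, ds.
\]
I would work in the Banach space $X_{\Lambda_\infty} := \{w \in C([\Lambda_\infty,\infty)) : \|w\|_X := \sup_{\rho\ge\Lambda_\infty} \rho^{1-\e} g'(\sigma(\rho))\,|w(\rho)| < \infty\}$ and show, for $\Lambda_\infty$ sufficiently large, that $T$ is a strict contraction on a small closed ball. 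The key ingredients are the linearized bound $\tfrac{1}{4} |f(y_a+w) - f(y_a+\tilde w)|e^{-s} \le C\, H(\sigma(s))/s\cdot|w - \tilde w|$, the sharp $R_0$ estimate above, and the change of variables $s = g(\tau)$ (together with Lemma~\ref{lem3-2}) to evaluate the resulting integrals. Banach's fixed-point theorem then yields a unique $w\in X_{\Lambda_\infty}$, and $y_\infty := y_a + w$ is the desired solution.

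For uniqueness with $\e = 1/2$, any two solutions $y_1, y_2$ obeying \eqref{eq-yhonto} give $v := y_1 - y_2 \in X_{\Lambda_\infty}$ automatically, and $v$ satisfies (by the mean-value theorem) the linear integral equation obtained by setting $R_0 \equiv 0$ in the formula for $T$; the same contraction estimate forces $\|v\|_X = 0$, hence $v \equiv 0$. The principal technical obstacle is the sharp residual estimate $|R_0| = O((\log\rho)^2/(\rho^3 g'(\sigma)))$: identifying $\xi_2$ in closed form and carrying out the second-order Taylor expansion of $g(y_a)$ while controlling all lower-order error terms via (G1), (G2), and the asymptotic identities of Lemmas~\ref{lem2-1}, \ref{lem3-2}, and \ref{lem2.7}.
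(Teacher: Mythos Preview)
Your contraction argument has a structural gap: the kernel $(s-\rho)$ cannot be used to treat the linearized term perturbatively. Writing $y=y_a+w$, the linearization of the nonlinearity is
\[
\tfrac14 f'(y_a(s))\,e^{-s}\;=\;\tfrac14\,g'(y_a)\,e^{g(y_a)-s}\;\approx\;\frac{H(\sigma(s))}{s}\;\longrightarrow\;\frac{1}{qs},
\]
exactly the bound you quote. But then, for $w\in X_{\Lambda_\infty}$ with $\|w\|_X=1$ (so $|w(s)|\le s^{\e-1}/g'(\sigma(s))$), one has
\[
\rho^{1-\e}g'(\sigma(\rho))\int_\rho^\infty (s-\rho)\,\frac{C}{s}\,\frac{ds}{s^{1-\e}g'(\sigma(s))}
\;\approx\;\rho^{1-\e+\frac1q+o(1)}\int_\rho^\infty (s-\rho)\,s^{-2+\e-\frac1q+o(1)}\,ds
\;\asymp\;C\rho,
\]
which diverges as $\rho\to\infty$. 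So $T$ does not even map $X_{\Lambda_\infty}$ into itself, let alone contract. The underlying reason is that the linearized homogeneous equation $w''+\tfrac{G(\rho)}{\rho}\,w=0$ has oscillatory solutions growing like $\rho^{1/4}$ (Bessel-type behaviour); a coefficient of exact order $1/\rho$ is \emph{not} a perturbation of $w''=0$, and no refinement of $R_0$ or choice of $\xi_2$ can fix this.

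The paper's proof resolves precisely this issue. Instead of the free kernel, one writes the linear part of the $\eta$-equation as $\eta''+I(\rho)\eta-\tfrac{a''}{a}\eta=0$ with $I(\rho)=G(\rho)/\rho$, and uses its explicit fundamental system $\Phi_1=a\sin b$, $\Phi_2=a\cos b$ (where $a^4=\rho/G$, $b'=a^{-2}$) to set up the variation-of-constants integral equation with kernel $a(\rho)a(\tau)\sin(b(\rho)-b(\tau))$. This absorbs the $O(1/\rho)$ linear term entirely; what remains on the right-hand side are the forcing terms $y_2''$, $J$, the small correction $(a''/a)\eta=O(\rho^{-2})\eta$, and genuinely nonlinear pieces $N_1,N_{2,i},N_3$ that are at worst $O(\rho^{-2+\e})|\eta|$. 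These \emph{do} contract in $X_{\Lambda_\infty}$. A second key point you are missing is that even the forcing terms $y_2'',J\sim\rho^{-2-1/q+o(1)}\log\rho$ need the oscillation of the kernel: the paper integrates by parts against $\sin(b(\rho)-b(\tau))$ (Lemma~\ref{lem3-1}) to gain the required extra factor of $\rho^{-1}$ that the naive bound $|a(\rho)a(\tau)|\le C(\rho\tau)^{1/4}$ would not give.
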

We remark that the following corollary follows from
Proposition \ref{prop-es}.
\begin{corollary}
\label{cor}
The solution $y_{\infty}(\rho)$ obtained in Proposition \ref{prop-es} satisfies
\begin{align} \label{eq-y24}
    g(y_{\infty}) 
    = \rho-2\log\rho+\log [\frac{g}{g'}(g^{-1}(\rho))]+\log 4H(g^{-1}(\rho)) 
    + o(\rho^{-1+\e}) 
    \hspace{4mm} \mbox{as $\rho \to \infty$}. 
\end{align}
\end{corollary}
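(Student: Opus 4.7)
The plan is to derive \eqref{eq-y24} from Proposition \ref{prop-es} by a Taylor expansion of $g$ around $s := g^{-1}(\rho)$, writing $y_\infty = s + \delta$. Using $g(s) = \rho$ to rewrite the coefficient $-\log(g'(s)/\rho) = \log(g/g')(s)$, the expansion \eqref{eq-yhonto} becomes
\[
\delta = \frac{A(\rho)}{g'(s)} + o\!\left(\frac{1}{\rho^{1-\varepsilon}\,g'(s)}\right),
\qquad
A(\rho) := -2\log\rho + \log\!\left[\tfrac{g}{g'}(g^{-1}(\rho))\right] + \log\bigl(4 H(g^{-1}(\rho))\bigr),
\]
so that $\rho + A(\rho)$ is exactly the main part of the right-hand side of \eqref{eq-y24}. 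I will then expand with Lagrange remainder,
\[
g(y_\infty) = \rho + g'(s)\,\delta + \frac{g''(s)}{2}\,\delta^2 + \frac{g'''(\xi)}{6}\,\delta^3,
\]
for some $\xi$ between $s$ and $y_\infty$; the linear term contributes $g'(s)\delta = A(\rho) + o(\rho^{-1+\varepsilon})$, and the remaining task is to bound the quadratic and cubic pieces by $o(\rho^{-1+\varepsilon})$.

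First I would estimate $|A(\rho)|$: by Lemma \ref{lem3-2}, $\log(g/g')(s)$ equals $(1-\tfrac{1}{q}+o(1))\log\rho$ when $q>1$ and $o(\log\rho)$ when $q=1$, while $H(s) \to 1/q$ by (G1); hence $A(\rho) = O(\log\rho)$ and consequently $\delta = O(\log\rho/g'(s))$. Since $sg'(s) \sim q\rho$ by \eqref{eq-y13}, one has $\delta/s \to 0$, so the Lagrange point satisfies $\xi = s(1+o(1))$, and a first-order expansion of $g$ and $g'$ at $s$ shows that $g(\xi) = \rho(1+o(1))$ and $g'(\xi) = g'(s)(1+o(1))$; this lets me transfer the derivative bounds from $s$ to $\xi$.

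For the quadratic term, the identity $H = g g''/g'^2$ yields $g''(s)/g'(s)^2 = H(s)/\rho$, and the boundedness of $H$ gives
\[
\frac{g''(s)}{2}\,\delta^2 = O\!\left(\frac{(\log\rho)^2}{\rho}\right) = o(\rho^{-1+\varepsilon}).
\]
For the cubic remainder, Lemma \ref{lem2.7} applied at $\xi$ yields $|g'''(\xi)| \le C g'(\xi)^3/g(\xi)^2 = O(g'(s)^3/\rho^2)$, so $\frac{g'''(\xi)}{6}\delta^3 = O((\log\rho)^3/\rho^2) = o(\rho^{-1+\varepsilon})$. The cross contributions arising from squaring the $o(1/(\rho^{1-\varepsilon}g'(s)))$ error inside $\delta$ are handled identically and are strictly smaller.

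The main place where care is needed is the transfer of derivative estimates from $s$ to the intermediate point $\xi$ in the Lagrange remainder; this rests on the relative smallness $\delta = o(s)$, which in turn uses \eqref{eq-y13}. Beyond that, the argument is a direct Taylor expansion and the matching of constants with those appearing in \eqref{eq-y24} is immediate from the definition of $A(\rho)$. All the analytic ingredients needed — the limit $H \to 1/q$ from (G1), the size estimate from Lemma \ref{lem3-2}, and the third-derivative bound from Lemma \ref{lem2.7} — have already been assembled in Section \ref{sec-pre}.
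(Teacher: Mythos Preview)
Your proposal is correct and follows essentially the same approach as the paper: a Taylor expansion of $g$ around $y_1=g^{-1}(\rho)$, with the quadratic term controlled via $g''=H\,g'^2/g$ and the remainder via the bound on $g'''$ from Lemma~\ref{lem2.7}. The only cosmetic difference is that the paper stops at second order with Lagrange remainder (placing the intermediate point in $g''$ and then comparing $g''(\hat y)$ to $g''(y_1)$), whereas you go to third order and place the intermediate point directly in $g'''$; the resulting estimates are identical.
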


\subsection{
Construction of a local solution near the origin}
\subsubsection{First and second orders of asymptotic 
of the singular solution}
To prove Proposition \ref{prop-es}, 
we determine an asymptotic of a solution to 
\eqref{ODE}. 
We choose a first-order asymptotic $y_{1}$
of a singular solution so that 
  \begin{equation} \label{eq-y4} 
  g(y_{1}) - \rho = 0.  
  \end{equation}
Namely, $y_{1} = g^{-1} (\rho)$ 
(note that it follows from Lemma \ref{lem2-2} 
that the function $g(s)$ has the inverse function 
for sufficiently large $\rho > 0$). 
%In this section, the prime mark} denotes the 
%differentiation with respect to $\rho$. 
From \eqref{eq-y4} and $g(s) = \log f(s)$, we have 
  \begin{equation} \label{eq-y76}
  1 = g^{\prime}(y_{1}) \frac{d y_1}{d \rho} 
  = \frac{f^{\prime}(y_{1})}{f(y_{1})} \frac{d y_1}{d \rho}.  
  \end{equation}
This implies 
  \begin{equation} \label{eq-y00} 
  f(y_{1}) = f^{\prime}(y_{1}) 
  \frac{d y_1}{d \rho} 
  \hspace{4mm} 
  \left(\frac{d y_1}{d \rho} = \frac{f(y_{1})}{f^{\prime}(y_{1})}\right). 
  \end{equation}
  Differentiating the above equality, we obtain 
  \begin{equation}\label{eq-y0}
  f^{\prime} (y_{1}) \frac{d y_1}{d \rho} 
  = f^{\prime \prime} (y_{1}) \left(\frac{d y_1}{d \rho}
  \right)^{2} + f^{\prime}(y_{1}) \frac{d^2 y_1}{d \rho^2}.
  \end{equation}
It follows from \eqref{eq-y00} 
and \eqref{eq-y0} that 
   \begin{equation} \label{eq-y3}
  \begin{split}
   \frac{d^2 y_1}{d \rho^2} 
  = \frac{d y_1}{d \rho} 
  - \frac{f^{\prime \prime} (y_{1})}{f^{\prime}(y_{1})} 
  \left(\frac{d y_1}{d \rho}
  \right)^{2} 
  = \frac{f(y_{1})}{f^{\prime}(y_{1})}
  - \frac{f^{\prime \prime} (y_{1})}{f^{\prime}(y_{1})} 
  \left(\frac{f(y_{1})}{f^{\prime}(y_{1})}
  \right)^{2} 
  & = \frac{f(y_{1})}{f^{\prime}(y_{1})}
  - \frac{f^{\prime \prime} (y_{1}) f^{2}(y_{1})}
  {(f^{\prime}(y_{1}))^{3}} \\
  & = - \frac{f(y_{1})}{f^{\prime}(y_{1})}
  \left(\frac{f^{\prime \prime} (y_{1}) f(y_{1})}
  {(f^{\prime}(y_{1}))^{2}} - 1\right).  
  \end{split}
    \end{equation}
We claim that 
$\frac{d^2 y_1}{d\rho^2}< 0$ for all $\rho$ sufficiently large. Indeed, it follows from $g(s) = \log f(s)$ that 
\begin{equation} \label{eq-y11}
    g'(s) = \frac{f'(s)}{f(s)}, 
    \hspace{4mm} 
    g''(s) = \frac{f(s) f''(s) - (f'(s))^{2}}
    {(f(s))^{2}}  
    = (g'(s))^{2} \left(\frac{f(s) f''(s)}{(f'(s))^{2}}-1\right). 
    \end{equation}
Therefore, by Lemma \ref{lem2-2} 
and \eqref{eq-y3}, we obtain the result. 

Now, we determine the second-order asymptotic 
$y_{2}$
of a singular solution. Thanks to the above assertion, we can choose $y_{2}$ so that   
  \begin{equation} \label{eq-y2}
  \frac{e^{g^{\prime}(y_{1}) y_{2}}}{4} := -\frac{d^2 y_1}{d\rho^2}=
  - \frac{f(y_{1})}{f^{\prime}(y_{1})}
  \left(1 - \frac{f^{\prime \prime} (y_{1}) f(y_{1})}
  {(f^{\prime}(y_{1}))^{2}}\right)=\frac{g''(y_1)}{g'(y_1)^3}. 
  \end{equation}
Thus, it follows from \eqref{eq-y4}, \eqref{eq-y11} and  \eqref{eq-y2} that
  \begin{equation} \label{shape-y2} 
  \begin{split}
  y_{2} 
  & = \frac{1}{g^{\prime} (y_{1})}
  \left( \log \left[
  \frac{g''(y_{1})}{(g'(y_{1}))^{3}}
  \right] + \log 4
  \right) \\ 
   & = \frac{1}{g^{\prime} (y_{1})}
  \left(-2 \log g(y_{1}) + 2 \log g(y_{1}) + 
  \log \left[\frac{1}{g^{\prime}(y_{1})}\right] 
  + \log \left[
  \frac{g''(y_{1})}{(g'(y_{1}))^{2}}
  \right] + \log 4
  \right) \\ 
    & = \frac{1}{g'(y_1)}(-2\log\rho 
    + \log\frac{g(y_{1})}{g'(y_{1})} + 
    \log  \frac{g(y_{1})g''(y_{1})
    }{(g'(y_{1}))^{2}} +\log 4) \\
    & = \frac{1}{g'(y_1)} 
    (-2\log\rho - \log\frac{g'(y_{1})}{g(y_{1})} 
    +\log G(\rho) + \log 4),
  \end{split}
  \end{equation}
where 
\begin{equation} \label{eq-y59}
    G(\rho) := H(y_1(\rho))= 
    \frac{g(y_1)g^{\prime \prime}(y_{1})}{(g^{\prime}(y_{1})^{2}}. 
    \end{equation}
%\textcolor{purple}{In particular, we have  by \eqref{eq-y2} 
%that
%  \[
%  \begin{split}
%  g^{\prime} (y_{1}) y_{2} 
%  & = \log \left[- \frac{f(y_{1})}{f^{\prime}(y_{1})} \left(1 - \frac{f^{\prime \prime} (y_{1}) f(y_{1})}
 % {(f^{\prime}(y_{1}))^{2}}\right)  \right] + \log 4 \\
%  & = \log \left[\frac{f(y_{1})}{f^{\prime}(y_{1})}\right] 
 % + \log \left[
 % \left(\frac{f^{\prime \prime} (y_{1}) f(y_{1})}
 % {(f^{\prime}(y_{1}))^{2}} - 1\right) \right] + \log 4. 
 % \end{split}
 % \]}
\subsubsection{Equation of the remainder term}
Let $y$ be a solution to \eqref{ODE}. We define $\eta:=y-y_1-y_2$. 
Put 
    \begin{equation} \label{eq-y96}
    h(\eta) := g(y_1+y_2 + \eta)-g(y_1) 
   - g'(y_{1}) (\eta + y_{2}) - \frac{1}{2}g''(y_{1})(\eta 
    + y_{2})^{2}.
    \end{equation}
Then, we have
\begin{align*}
    -\frac{d^{2} \eta}{d \rho^2}&=\frac{d^{2} y_1}{d \rho^2}+\frac{d^{2} y_2}{d \rho^2}+\frac{1}{4}e^{g(\eta+y_1+y_2)-g(y_1)}\\
    & = \frac{d^{2} y_1}{d \rho^2}+\frac{d^{2} y_2}{d \rho^2} +\frac{1}{4}
   e^{g'(y_{1}) (\eta + y_{2}) + \frac{1}{2}g''(y_{1})(\eta 
    + y_{2})^{2}
    + h(\eta)}\\
    & =  \frac{d^{2} y_1}{d \rho^2}+\frac{d^{2} y_2}{d \rho^2} + 
    \frac{1}{4}
    e^{g'(y_{1}) (\eta + y_{2}) + \frac{1}{2}g''(y_{1})(\eta 
    + y_{2})^{2}} + 
    \frac{1}{4}
    e^{g'(y_{1}) (\eta + y_{2}) + \frac{1}{2}
    g''(y_{1})(\eta 
    + y_{2})^{2}}
    (e^{h(\eta)} - 1) \\
    & =  \frac{d^{2} y_1}{d \rho^2}+\frac{d^{2} y_2}{d \rho^2} + 
    \frac{1}{4}
    e^{g'(y_{1}) (\eta + y_{2})}
    + 
    \frac{1}{4}
    e^{g'(y_{1}) (\eta + y_{2})}
    \left(e^{\frac{1}{2}g''(y_{1})(\eta 
    + y_{2})^{2}} -1 \right)\\
    & \quad + 
    \frac{1}{4}
    e^{g'(y_{1}) (\eta + y_{2}) + \frac{1}{2}
    g''(y_{1})(\eta 
    + y_{2})^{2}}
    (e^{h(\eta)} - 1) \\
    &=\frac{d^{2} y_1}{d \rho^2}+\frac{d^{2} y_2}{d \rho^2}+\frac{1}{4}e^{g'(y_1)y_2} 
    \left\{1+g'(y_1)\eta+
    \left(e^{g'(y_1)\eta}-1-g'(y_1)\eta \right) 
    \right\} \\[6pt]
    & \quad + 
    \frac{1}{4}
    e^{g'(y_{1}) (\eta + y_{2})}
    \left(e^{\frac{1}{2}g''(y_{1})(\eta 
    + y_{2})^{2}} -1 \right) 
    + \frac{1}{4}
    e^{g'(y_{1}) (\eta + y_{2}) + \frac{1}{2}
    g''(y_{1})(\eta 
    + y_{2})^{2}}
    (e^{h(\eta)} - 1).
\end{align*}
Then, by \eqref{eq-y2}, we obtain 
    \begin{equation} \label{eq-eta}
    \begin{split}
    -\frac{d^{2} \eta}{d \rho^2} 
%    & = \frac{d^{2} y_2}{d \rho^2}+\frac{1}{4}e^{g'(y_1)y_2} 
%    \left(g'(y_1)\eta+\left(
%    e^{g'(y_1)\eta}-1-g'(y_1)\eta \right) 
    %+e^{g'(y_1)\eta}(e^{g''(\hat{y})
    % (\eta+y_2)^2}-1)
%    \right)
%     \\
%    & \quad + 
%    \frac{1}{4}
%    e^{g'(y_{1}) (\eta + y_{2})}
%    \left(e^{g''(y_{1})(\eta 
%    + y_{2})^{2}} -1 \right) 
%    + \frac{1}{4}
%    e^{g'(y_{1}) (\eta + y_{2}) + 
%    g''(y_{1})(\eta 
%    + y_{2})^{2}}
%    (e^{h(\eta)} - 1) \\
    & = \frac{1}{4}e^{g'(y_1)y_2} 
    g'(y_1)\eta
    + \frac{1}{4}e^{g'(y_1)y_2}
    \left(e^{g'(y_1)\eta}-1-g'(y_1)\eta \right) 
    %+e^{g'(y_1)\eta}(e^{g''(\hat{y})
    % (\eta+y_2)^2}-1)
     \\
    & \quad + 
    \frac{1}{4}
    e^{g'(y_{1}) (\eta + y_{2})}
    \left(e^{\frac{1}{2}g''(y_{1})(\eta 
    + y_{2})^{2}} -1 \right) 
    + \frac{1}{4}
    e^{g'(y_{1}) (\eta + y_{2}) + \frac{1}{2}
    g''(y_{1})(\eta 
    + y_{2})^{2}}
    (e^{h(\eta)} - 1) + \frac{d^{2} y_2}{d \rho^2} \\
    &=I(\rho)\eta+ N_{1}(\eta,\rho)+ 
    N_2(\eta,\rho) + 
    N_3(\eta,\rho) + \frac{d^{2} y_2}{d \rho^2}, 
\end{split}
\end{equation}
where $I(\rho)$, $N_{1}(\eta, \rho)$ 
and $N_{2}(\eta, \rho)$ are defined as
    \begin{align}
    & I(\rho) := 
    \frac{1}{4}e^{g'(y_1)y_2}
    g'(y_1),\notag
    %\label{eq-y8}
    \\
    & N_{1}(\eta, \rho) := 
    \frac{1}{4}e^{g'(y_1)y_2} 
    \left(
    e^{g'(y_1)\eta}-1-g'(y_1)\eta \right), 
    \notag%\label{eq-y9}
    \\
    & N_{2}(\eta, \rho) := 
    \frac{1}{4}
    e^{g'(y_{1}) (\eta + y_{2})}
    \left(e^{\frac{1}{2}g''(y_{1})(\eta 
    + y_{2})^{2}} -1 \right),\notag \\
    & N_{3}(\eta, \rho) := 
    \frac{1}{4}
    e^{g'(y_{1}) (\eta + y_{2}) + \frac{1}{2}
    g''(y_{1})(\eta 
    + y_{2})^{2}}
    (e^{h(\eta)} - 1). \label{eq-y58}
    \end{align}
It follows from 
\eqref{eq-y4}, \eqref{eq-y2} and 
\eqref{eq-y59} that
$I$ can be written by 
\begin{equation} \label{eq-y7}
\begin{split}
    \quad I(\rho) 
    =g'(y_1)\frac{g''(y_1)}{g'^3(y_1)}=\frac{1}{g(y_1)}G(\rho)
  =\frac{1}{\rho}G(\rho) \hspace{4mm}(\text{i.e.,}\quad  e^{g'(y_1)y_2} = 4 \frac{G(\rho)}{\rho 
    g'(y_{1})}).
\end{split}
\end{equation}
Furthermore, by \eqref{eq-y7}, 
we obtain 
    \[
    \begin{split}
    N_{2}(\eta, \rho) 
    & = 
    \frac{1}{8}
    e^{g'(y_{1}) (\eta + y_{2})}g''(y_{1})
    (\eta + y_2)^2
   + \frac{1}{4}e^{g'(y_{1}) (\eta + y_{2})}
    \left(
    e^{\frac{1}{2}g''(y_{1})(\eta + y_{2})^{2}} -1 
    - \frac{1}{2}g''(y_{1})(\eta + y_2)^2 
    \right) 
 \\[6pt]
    & = \frac{G(\rho)}{2\rho 
    g'(y_{1})}g''(y_{1}) 
    \left(1 + 
\left(e^{g'(y_{1}) \eta} -1\right) \right) 
(\eta + y_2)^2 
+ \frac{G(\rho)}{\rho 
    g'(y_{1})}  
e^{g'(y_{1})\eta}
    \left(
    e^{\frac{1}{2}g''(y_{1})(\eta + y_{2})^{2}} -1 
    - \frac{1}{2}g''(y_{1})(\eta + y_2)^2 
    \right) \\[6pt]
    & = \frac{G(\rho)}{2\rho 
    g'(y_{1})}g''(y_{1}) (\eta + y_2)^2 
    + \frac{G(\rho)}{2\rho 
    g'(y_{1})}
    g''(y_{1})
    \left(e^{g'(y_{1}) \eta} -1\right)
    (\eta + y_2)^2 \\[6pt]
    & \quad 
    + \frac{G(\rho)}{\rho 
    g'(y_{1})}  
    \left(
    e^{\frac{1}{2}g''(y_{1})(\eta + y_{2})^{2}} -1 
    - \frac{1}{2}g''(y_{1})(\eta + y_2)^2 
    \right) \\[6pt]
    & \quad 
    + \frac{G(\rho)}{\rho 
    g'(y_{1})} \left(e^{g'(y_{1})\eta} 
    - 1\right)
    \left(
    e^{\frac{1}{2}g''(y_{1})(\eta + y_{2})^{2}} -1 
    - \frac{1}{2}g''(y_{1})(\eta + y_2)^2 
    \right) \\[6pt]
     & = \frac{G(\rho)}{2\rho 
    g'(y_{1})} g''(y_{1}) y_{2}^{2} 
    + \frac{G(\rho)}{2\rho 
    g'(y_{1})}g''(y_{1}) (2 y_{2}\eta + \eta^2) 
    + \frac{G(\rho)}{2\rho 
    g'(y_{1})} g''(y_{1}) 
    \left(e^{g'(y_{1}) \eta} -1\right)
    (\eta + y_2)^2 \\[6pt]
    & \quad 
    + \frac{G(\rho)}{\rho 
    g'(y_{1})}  
    \left(
    e^{\frac{1}{2}g''(y_{1})(\eta + y_{2})^{2}} -1 
    - \frac{1}{2}g''(y_{1})(\eta + y_2)^2 
    \right) \\[6pt]
    & \quad 
    + \frac{G(\rho)}{\rho 
    g'(y_{1})} \left(e^{g'(y_{1})\eta} 
    - 1\right)
    \left(
    e^{\frac{1}{2}g''(y_{1})(\eta + y_{2})^{2}} -1 
    - \frac{1}{2}g''(y_{1})(\eta + y_2)^2 
    \right) \\
    & = J(\rho)
+  \sum_{i = 1}^{4} N_{2, i} (\eta, \rho), 
    \end{split}
    \]
where 
    \[
    \begin{split}
    &J(\rho) :=\frac{G(\rho)}{2\rho 
    g'(y_{1})} g''(y_{1}) y_{2}^{2},\\ 
    & N_{2, 1}(\eta, \rho) 
    := \frac{G(\rho)}{2\rho 
    g'(y_{1})} g''(y_{1})(\eta^{2}+2 \eta y_2), \\
& N_{2, 2}(\eta, \rho) 
    := \frac{G(\rho)}{2\rho 
    g'(y_{1})} g''(y_{1}) 
    \left(e^{g'(y_{1}) \eta} -1\right)
    (\eta + y_2)^2, \\
& N_{2, 3}(\eta, \rho) 
    := \frac{G(\rho)}{\rho 
    g'(y_{1})}  
    \left(
    e^{\frac{1}{2}g''(y_{1})(\eta + y_{2})^{2}} -1 
    - \frac{1}{2}g''(y_{1})(\eta + y_2)^2 
    \right), \\
& N_{2, 4}(\eta, \rho) 
    := \frac{G(\rho)}{\rho 
    g'(y_{1})} \left(e^{g'(y_{1})\eta} 
    - 1\right)
    \left(
    e^{\frac{1}{2}g''(y_{1})(\eta + y_{2})^{2}} -1 
    - \frac{1}{2}g''(y_{1})(\eta + y_2)^2 
    \right). 
    \end{split}
    \]
    
We now transform the equation \eqref{eq-eta} 
to an integral equation. 
However, since the solution to the linear part of the 
equation \eqref{eq-eta} cannot be 
express by some elementary functions, 
we need some ingredients. 
To this end, we first put 
\begin{equation} \label{eq-y22}
    a^4(\rho) :=\frac{\rho}{G(\rho)},\hspace{4mm
    }
    b(\rho): =\int_{\rho_0}^{\rho}(\frac{G(\tau)}{\tau})^{\frac{1}{2}}\,d\tau
\end{equation}
for sufficiently large 
fixed $\rho_0 >0$. 
Then, we have 
$a^2 b'(\rho) = 1$ for any $\rho > \rho_0$, which implies that 
    \begin{equation} \label{eq-ab}
       2 \frac{d a}{d \rho} \frac{d b}{d \rho} 
       + a \frac{d^2 b}{d \rho^2} = 0. 
    \end{equation}
Then, we can verify that 
if $\eta$ satisfies 
\eqref{eq-eta}, 
we see that $\eta$ is a solution of
    \begin{equation} \label{eq-z2}
     \frac{d^{2} \eta}{d \rho^2}+I \eta -
     \frac{1}{a} \frac{d^2 a}{d \rho^2}\eta 
     = - \frac{d^{2} y_2}{d \rho^2}
    -\frac{1}{a} \frac{d^2 a}{d \rho^2}\eta 
     - J(\rho)
    - N_{1}(\eta,\rho)
    - \sum_{i = 1}^{4} N_{2, i} (\eta, \rho) 
    - N_{3}(\eta, \rho).  
    \end{equation}
We consider the following type of functions
\begin{equation*}
    \Phi_1(\rho)=a(\rho)\sin b(\rho), 
    \hspace{4mm} 
    \Phi_2(\rho)=a(\rho)\cos b(\rho).
\end{equation*}   
Using \eqref{eq-y7} and \eqref{eq-ab}, 
we can compute
\begin{equation*}
\begin{split}
    \frac{d^{2} \Phi_{i}}{d \rho^2} 
    +I\Phi_i - \frac{1}{a} \frac{d^2 a}{d \rho^2} \Phi_i
    & = (\frac{1}{a} \frac{d^2 a}{d \rho^2}-\left(\frac{d b}{d \rho} 
    \right)^{2})\Phi_i
    +\frac{G}{\rho} \Phi_{i}
     - \frac{1}{a} \frac{d^2 a}{d \rho^2} \Phi_i
    =(\frac{1}{a} \frac{d^2 a}{d \rho^2}-\frac{1}{a^4}+\frac{G}{\rho} 
     - \frac{1}{a} \frac{d^2 a}{d \rho^2})\Phi_i 
    = 0.
\end{split}
\end{equation*}
Then, we deduce that 
($\Phi_1, \Phi_2$) is a 
pair of fundamental solutions of the following:
\begin{align} \label{eq-lz}
\frac{d^{2} \eta}{d \rho^2}+I \eta-\frac{1}{a} \frac{d^2 a}{d \rho^2}\eta=0.
\end{align}
Note that the RHS of 
the equation \eqref{eq-lz} is the linear part 
of \eqref{eq-z2}. 
Then, we deduce that 
($\Phi_1, \Phi_2$) is a 
pair of fundamental solutions of 
\begin{align*}
   \frac{d^{2} \eta}{d \rho^2} +I 
  \eta-\frac{1}{a} \frac{d^2 a}{d \rho^2}\eta=0.
\end{align*}
Therefore, the equation \eqref{eq-z2} 
(equivalently, \eqref{eq-eta}) 
with $\lim_{\rho \to \infty} \eta(\rho) = 0$
is equivalent to 
\begin{equation} \label{eq-y23}
\eta(\rho) = 
\int^{\infty}_{\rho}a(\rho)a(\tau)
\sin\left(
(b(\rho)-b(\tau))\right)[\frac{d^{2} y_2}{d \rho^2}(\tau)
+ J(\tau)+\frac{1}{a} 
\frac{d^2 a}{d \rho^2} \eta + N_{1}(\eta,\tau) 
+ \sum_{i = 1}^{4} N_{2, i} (\eta, \rho)
+ N_{3}(\eta,\tau)
]\,d\tau. 
\end{equation}

\subsubsection{Contraction argument and the 
proof of Proposition \ref{prop-es}}
We now give the proof 
of Proposition \ref{prop-es}. 
It suffices to seek a solution to 
the integral equation \eqref{eq-y23} by 
the fixed point theorem. At first, we introduce the following 
\begin{lemma}
\label{lem3-0}
We have
\begin{align}
    & 0<y_{1} \leq 
    C \rho^{1- \frac{1}{q} + o(1)}, \label{eq-y50} \\
    & 0<-y_{2} \leq 
    C \rho^{-\frac{1}{q} + o(1)} (\log \rho) 
    \label{eq-y51-2}
\end{align}
and
\begin{equation}
\label{derivative-y2}
|\frac{d y_2}{d \rho}|\le \frac{C(\log \rho)}{\rho^{1+\frac{1}{q}+o(1)}}, \hspace{4mm} |\frac{d^2 y_2}{d \rho^2}|\le \frac{C(\log \rho)}{\rho^{2+\frac{1}{q}+o(1)}}, \hspace{4mm} |\frac{d^3 y_2}{d \rho^3}|\le \frac{C(\log \rho)}{\rho^{3+\frac{1}{q}+o(1)}}.
\end{equation}
Moreover, we obtain
\begin{equation}\label{derivative-a}
|\frac{1}{a} \frac{d a}{d \rho}|\le \frac{C}{\rho},\hspace{4mm}|\frac{1}{a} 
\frac{d^2 a}{d \rho^2}|\le \frac{C}{\rho^2}.
\end{equation}
\end{lemma}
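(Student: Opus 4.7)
The plan is to derive all four estimates directly from the explicit formulas defining $y_1,y_2,a,b$, combined with the asymptotic identities gathered in Section \ref{sec-pre}. Since $y_1 = g^{-1}(\rho)$, the bound \eqref{eq-y50} is immediate: in the case $q>1$, inverting \eqref{eq-y52} gives $y_1 = \rho^{1-1/q+o(1)}$, while in the case $q=1$ the estimate \eqref{eq-y52-1} forces $y_1 \leq \rho^{o(1)}$, so both cases are summarized by \eqref{eq-y50}. Positivity of $y_1$ follows from Lemma \ref{lem2-2}.

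For \eqref{eq-y51-2}, I would substitute $g(y_1)=\rho$ into the last line of \eqref{shape-y2} and bound each of the four summands in the bracket. The dominant term is $-2\log\rho$; from \eqref{eq-y41} and \eqref{answer-tec0} one deduces $\log(g'(y_1)/g(y_1)) = (1/q-1+o(1))\log\rho$, while $G(\rho)=H(y_1)\to 1/q$ by (G1), so $\log G(\rho)$ is bounded. Multiplying by $1/g'(y_1)$, whose size is controlled by \eqref{eq-y52-2} as $\rho^{-1/q+o(1)}$, yields the upper bound $|y_2| \leq C\rho^{-1/q+o(1)}\log\rho$; the sign $y_2<0$ for large $\rho$ comes from the same analysis, since the dominant term $-2\log\rho/g'(y_1)$ is negative and strictly dominates the others.

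For the derivative bounds \eqref{derivative-y2}, the approach is to differentiate \eqref{shape-y2} term by term. The key identities are $y_1'(\rho)=1/g'(y_1)$ and $(1/g'(y_1))' = -g''(y_1)/g'(y_1)^3 = -H(y_1)/(\rho g'(y_1))$, from which each differentiation contributes a factor of order $1/\rho$ (arising either from differentiating $\log\rho$ or from the identity just stated), while $H(y_1)$, $G(\rho)$ and $\log G(\rho)$ remain bounded by (G1). For $d^2 y_2/d\rho^2$ and $d^3 y_2/d\rho^3$ one additionally needs the bounds on $|H^{(k)}|$ from (G2) and Lemma \ref{lem2.7} to control the appearance of $g'''(y_1)$; each use of (G2) absorbs one factor of $g'/g$ against an $H^{(k)}$, leaving exactly the power of $1/\rho$ claimed.

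For \eqref{derivative-a}: since $4\log a = \log\rho - \log G(\rho)$, we have $a'/a = (1/\rho - G'(\rho)/G(\rho))/4$. Writing $G'(\rho) = H'(y_1)/g'(y_1)$ and invoking $|H'(s)| \leq Cg'(s)/g(s)$ from (G2) yields $|G'(\rho)|=O(1/\rho)$, so $|a'/a| = O(1/\rho)$. A further differentiation, combined with the (G2) estimate on $H''$, gives $|(a'/a)'| = O(1/\rho^2)$, and then $a''/a = (a'/a)' + (a'/a)^2$ produces \eqref{derivative-a}. The main technical burden — rather than a conceptual obstacle — is the careful bookkeeping for the third derivative of $y_2$, where $g'''(y_1)$ enters and must be tamed via Lemma \ref{lem2.7}; this is precisely why (G2) was stated in terms of three derivatives of $H$.
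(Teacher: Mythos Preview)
Your proposal is correct and follows essentially the same approach as the paper: both argue \eqref{eq-y50}--\eqref{eq-y51-2} by substituting the explicit formulas and invoking Lemma \ref{lem3-2}, relegate the lengthy bookkeeping for \eqref{derivative-y2} to repeated differentiation of \eqref{shape-y2} using $y_1'=1/g'(y_1)$ and the (G2) bounds on $H^{(k)}$, and handle \eqref{derivative-a} by direct computation from $a^4=\rho/G$. The only cosmetic difference is that the paper differentiates $a^4$ directly rather than taking logarithms, and organizes the $y_2$-derivative computation entirely in terms of $H$ and its derivatives (so that $g'''$ never appears explicitly and Lemma \ref{lem2.7} is not invoked there), but this is a matter of presentation rather than substance.
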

\begin{proof}
We first remark that we have \eqref{eq-y50} by \eqref{eq-y52} and \eqref{eq-y4}. In addition,
we remind that \eqref{eq-y4} and \eqref{shape-y2} implies
\begin{equation} \label{eq-y77}
y_2= \frac{1}{g'(y_1)} 
    (-2\log\rho - \log\frac{g'(y_{1})}{g(y_{1})} 
    +\log G(\rho) + \log 4)
    = \frac{1}{g'(y_1)} 
    (- \log\rho - \log g'(y_{1}) 
    +\log G(\rho) + \log 4).
\end{equation}
Thus, by Lemma \ref{lem3-2} and \eqref{eq-y4}, we have \eqref{eq-y51-2}.
Since the computation of 
\eqref{derivative-y2} is rather 
technical and long, 
we give it in Appendix \ref{sec-y2} below. 

Finally, we prove \eqref{derivative-a}. 
By direct computation, we obtain
\[4a^3 \frac{d a}{d \rho} = 
\frac{1}{G}-\frac{\rho H'(y_1)}{G^2 g'(y_1)},\]
\[4a^3\frac{d^2 a}{d\rho^2}+12a^2 (\frac{da}{d\rho})^2=\frac{-H'(y_1)}{G^2 g'(y_1)}-\frac{H'(y_1)G^2g'(y_1)+\rho H''(y_1)G^2-2G\rho H'^2(y_1)- G^3 H'(y_1)g'(y_1)}{G^4g'(y_1)^2}.
\]
Thus, the estimate \eqref{derivative-a} follows from Lemma \ref{lem3-2} and (G2).
\end{proof}

\begin{lemma} \label{lem3-1}
Assume that (G1) and (G2)
\begin{equation}\label{eq-y19-1}
 |\mathcal{P}(\frac{d^2 y_2}{d \rho^2})|
\leq \frac{C}{\rho^{1 + \frac{1}{q}+o(1)}}
\log \rho
\hspace{4mm}\text{and}\hspace{4mm}|\mathcal{P}(J)|
\leq \frac{C}{\rho^{1 + \frac{1}{q}+o(1)}} 
\log \rho,
\end{equation}
where
\begin{equation*}
\mathcal{P}(\kappa)=\int^{\infty}_{\rho}a(\rho)a(\tau)\sin\left(2(b(\rho)-b(\tau))\right)\kappa(\tau)\,d\tau.
\end{equation*}
\end{lemma}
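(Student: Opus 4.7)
The key idea is to exploit the oscillation of $\sin(2(b(\rho)-b(\tau)))$ through an integration by parts, using the fact that, by the very definition \eqref{eq-y22} of $a$ and $b$, we have $b'(\tau)=(G(\tau)/\tau)^{1/2}=1/a^{2}(\tau)$. Hence
\[
\sin\!\bigl(2(b(\rho)-b(\tau))\bigr)=\tfrac{a^{2}(\tau)}{2}\,\frac{d}{d\tau}\cos\!\bigl(2(b(\rho)-b(\tau))\bigr),
\]
so that for $\kappa\in\{d^{2}y_{2}/d\rho^{2},\,J\}$ integration by parts yields
\[
\mathcal{P}(\kappa)(\rho)=\Bigl[\tfrac{a(\rho)a^{3}(\tau)}{2}\cos\!\bigl(2(b(\rho)-b(\tau))\bigr)\kappa(\tau)\Bigr]_{\tau=\rho}^{\tau=\infty}
-a(\rho)\int_{\rho}^{\infty}\cos\!\bigl(2(b(\rho)-b(\tau))\bigr)\,\frac{d}{d\tau}\!\Bigl(\tfrac{a^{3}(\tau)}{2}\kappa(\tau)\Bigr)\,d\tau.
\]

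The proposal is then to control three pieces. Since $a^{4}=\tau/G$ and $G(\tau)\to 1/q>0$ (Lemma \ref{lem2-2}), we have $a(\tau)=\tau^{1/4+o(1)}$, so the boundary term at infinity vanishes because $a^{3}(\tau)\kappa(\tau)$ decays polynomially (using the bound on $d^{2}y_{2}/d\rho^{2}$ from Lemma \ref{lem3-0} and the analogous decay for $J$ derived below). The boundary term at $\tau=\rho$ equals $-\tfrac{a^{4}(\rho)}{2}\kappa(\rho)=-\tfrac{\rho}{2G(\rho)}\kappa(\rho)$, which is already of order $\rho^{-1-1/q+o(1)}\log\rho$. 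For the remaining integral, $|\cos|\le 1$ and $a(\rho)\sim\rho^{1/4+o(1)}$ reduce the task to bounding
\[
\int_{\rho}^{\infty}\!\bigl(a^{3}(\tau)|\kappa'(\tau)|+a^{2}(\tau)|a'(\tau)|\,|\kappa(\tau)|\bigr)\,d\tau,
\]
where by \eqref{derivative-a} we have $|a'/a|\le C/\rho$, so $a^{2}|a'|\sim \tau^{-1/4+o(1)}$.

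The bulk of the work is therefore to produce decay estimates for $\kappa$ and $\kappa'$. For $\kappa=d^{2}y_{2}/d\rho^{2}$, Lemma \ref{lem3-0} already supplies $|d^{3}y_{2}/d\rho^{3}|\le C(\log\rho)\rho^{-3-1/q+o(1)}$, so $a^{3}|\kappa'|\lesssim\tau^{-9/4-1/q+o(1)}\log\tau$ and $a^{2}|a'|\,|\kappa|\lesssim\tau^{-9/4-1/q+o(1)}\log\tau$, whose integral from $\rho$ is $O(\rho^{-5/4-1/q+o(1)}\log\rho)$; multiplication by $a(\rho)$ gives exactly the $\rho^{-1-1/q+o(1)}\log\rho$ bound. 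For $\kappa=J$, using $H(y_{1})=G$ to rewrite $g''(y_{1})=Gg'(y_{1})^{2}/\rho$, one obtains the explicit form $J=\tfrac{1}{2}G^{2}g'(y_{1})y_{2}^{2}\rho^{-2}$. Combining Lemma \ref{lem3-2} (which gives $1/g'(y_{1})=\rho^{-1/q+o(1)}$) with the formula \eqref{eq-y77} for $y_{2}$ yields $J(\tau)=O(\tau^{-2-1/q+o(1)}(\log\tau)^{2})$, and differentiating while using (G2) to control $H'$, $H''$ and $y_{1}'=1/g'(y_{1})$ produces the analogous derivative bound $|J'(\tau)|\le C\tau^{-3-1/q+o(1)}(\log\tau)^{2}$. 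Substituting into the integrals above yields the same $\rho^{-1-1/q+o(1)}\log\rho$ estimate.

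The main obstacle is the derivative estimate for $J$: one must track several competing factors ($G$, $g'(y_{1})$, $y_{2}$, and the explicit $\rho^{-2}$) and confirm that differentiation costs only one additional power of $\tau^{-1}$ plus a possibly larger power of $\log\tau$, which is absorbed into the $\tau^{o(1)}$ factor. Once (G2) is invoked to bound $H'$ and $H''$ as in Lemma \ref{lem2.7} and the auxiliary estimates of Lemma \ref{lem3-0} are used for $y_{1}'$ and $y_{2}'$, the rest of the argument is a routine computation, and both bounds in \eqref{eq-y19-1} follow simultaneously from the same integration by parts.
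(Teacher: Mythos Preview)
Your proposal is correct and follows essentially the same route as the paper: integrate by parts using $b'(\tau)=1/a^{2}(\tau)$ to convert the oscillatory integral into a boundary term plus an integral of $a^{3}\kappa'+3a^{2}a'\kappa$, then feed in the decay estimates $|J|\lesssim\tau^{-2-1/q+o(1)}(\log\tau)^{2}$, $|J'|\lesssim\tau^{-3-1/q+o(1)}(\log\tau)^{2}$ (derived from the identity $2J=G^{2}g'(y_{1})y_{2}^{2}/\rho^{2}$) and the bounds on $d^{2}y_{2}/d\rho^{2}$, $d^{3}y_{2}/d\rho^{3}$ from Lemma~\ref{lem3-0}. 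The only cosmetic difference is that the paper's proof drops the factor $2$ inside the sine (the lemma's statement and the integral equation \eqref{eq-y23} are inconsistent on this point), which changes a harmless constant in the boundary term but nothing else.
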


\begin{proof}
Observe from \eqref{eq-y4} and 
\eqref{eq-y76} that 
    \[
    \begin{split}
    2J(\rho)=\frac{G(\rho)}{\rho g'(y_{1})} g''(y_{1}) y_{2}^{2}
    = \frac{G(\rho)}{\rho g'(y_{1})} 
    \frac{g(y_{1})}{g'(y_{1})}
    \frac{g'(y_{1})}{g(y_{1})}g''(y_{1}) y_{2}^{2}
    & = \frac{g'(y_{1})}{\rho^2} 
   G(\rho) \frac{g(y_{1}) g''(y_{1})}{
   (g'(y_{1}))^{2}} y_{2}^{2} \\
    & = \frac{g'(y_{1})}{g(y_{1})\rho} 
    G^{2}(\rho) y_{2}^{2}, %\\
   % & = \frac{1}{q}^{2} \frac{g'(y_{1})}{g(y_{1})} y_{2}^{2}
%+ (G^{2}(\rho) - \frac{1}{q}^{2}) 
%\frac{g'(y_{1})}{g(y_{1})} y_{2}^{2}
    \end{split}
    \]
\[2J'(\rho)=-4\mathcal{J(\rho)}\rho^{-1}+\frac{g''(y_1)}{\rho^2 g'(y_1)}G^2(\rho)y_2^2+\frac{2H'(y_1)G(\rho)y_{2}^2}{\rho^2}+\frac{2g'(y_1)G^2(\rho)y_2 y_{2}'}{\rho^2}.\]

By combining the above two equalities
with Lemma \ref{lem3-2} and Lemma \ref{lem3-0}, we have   
\begin{equation}
\label{derivative-J}
    |J(\rho)|\le \frac{C}{\rho^{2 +\frac{1}{q}+o(1)}}
    (\log \rho)^2 \hspace{4mm}\text{and}\hspace{4mm}|J'(\rho)|\le \frac{C}{\rho^{3+\frac{1}{q}+o(1)}}
    (\log \rho)^2. 
\end{equation}
Now, for every $\kappa\in C^2(\rho_0,\infty)$ with $\lim_{\rho \to \infty} 
\kappa(\rho) \rho^{\frac{3}{4}} 
= 0$, 
using the fact that 
$a^{2} \frac{d b}{d \rho} = 1$, 
we can verify that
\begin{equation*}
\begin{split}
  \mathcal{P}(\kappa(\rho)):=&\quad 
\int^{\infty}_{\rho}a(\rho)a(\tau)\sin
\left((b(\rho)-
b(\tau))\right)\kappa(\tau)\,d\tau \\ 
   & = 
   \int^{\infty}_{\rho}a(\rho)a(\tau)
   \frac{1}{b'(\tau)}
   \frac{d}{d \tau}
   \left(\cos \left((b(\rho)-
   b(\tau))\right) 
   \right)\kappa(\tau) \,d\tau \\
   & = \int^{\infty}_{\rho}a(\rho)a^3(\tau)
   \frac{d}{d \tau} 
   \left(\cos \left((b(\rho)-
   b(\tau))\right) 
   \right)\kappa(\tau)\,d\tau \\
%   & =[\frac{1}{2}a(\rho)a(\tau)
%   \left(\frac{d b}{d \rho} \right)^{-1}
%   (\tau)\cos\left(2(b(\rho)-
%b(\tau))\right)\kappa]^{\infty}_{\tau=\rho
%} \\
%& \quad -\frac{a(\rho)}
%   {2}\int^{\infty}_{\rho} 
%   \left(\left(\frac{a(\tau)}{b^{\prime}
%   (\tau)}\right)^{\prime}\kappa 
%   +\left(\frac{a(\tau)}{b^{\prime}
%   (\tau)}\right) \kappa'\right)
%   \cos\left(2(\rho^{\frac{1}{2}}-
%   \tau^{\frac{1}{2}})\right)\,d\tau  \\
   &=[a(\rho)a^3(\tau)
   \cos\left(2(b(\rho)-b(\tau))\right)
   \kappa(\tau)
   ]^{\infty}_{\tau=\rho}
   \\
   & \quad -a(\rho)\int^{\infty}_{\rho} 
   (a(\tau)^{3}\kappa'(\tau) 
   +3a^2(\tau)a'(\tau) 
   \kappa(\tau))\cos\left((b(\rho)-b(\tau))\right)\,d\tau\\
   & = -a^{4}(\rho) 
   \kappa(\rho) -a(\rho)\int^{\infty}_{\rho} (a(\tau)^{3}\kappa'(\tau) + 
   3a^2(\tau)a'(\tau)
   \kappa(\tau))
   \cos\left((b(\rho)-b(\tau))\right)\,d\tau.
\end{split}
\end{equation*}
By the above equality, 
\eqref{derivative-a} and 
\eqref{eq-y22}, we have
\[|\mathcal{P}(\kappa)|\le C\rho\kappa+C\rho^{1/4}
\int_{\rho}^{\infty}(\tau^{3/4}|\kappa'(\tau)| 
+\tau^{-1/4}|\kappa(\tau)|) d\tau.\]
Hence, the estimate \eqref{eq-y19-1} follows from \eqref{derivative-y2} and \eqref{derivative-J}.
\end{proof}

We now estimate the nonlinear terms 
$N_{1}, N_{2, i}\; (1 \leq i \leq 4)$ 
and $N_{3}$. 
To this end, we put 
    \[
    h_{1}(\eta, \rho) := (\eta + y_{2})^{2}, 
    \hspace{4mm} 
    h_{2}(\eta, \rho) := e^{g'(y_{1}) \eta} -1, 
    \hspace{4mm} 
    h_{3}(\eta, \rho) := e^{\frac{1}{2}
    g''(y_{1})(\eta+y_2)^2} -1.  
    \]
Then, we first show the following:
\begin{lemma}  
\label{lem3-4}
Assume $\limsup_{\rho\to\infty}g'(y_1)\rho^{1-\e}|\eta(\rho)|\le \delta$ with some $0<\e\le \frac{1}{2}$ and $0<\delta<\frac{1}{100}$. 
Then, one has 
    \begin{align}
    & |h_{1}(\eta, \rho)| \leq 
    C \rho^{-\frac{2}{q} + o(1)}, \label{eq-y50-1} \\
    & |h_{2}(\eta, \rho)|\le 8\delta \rho^{-1 + \e}, 
    \label{eq-y51} \\
   & |h_{3}(\eta, \rho)| 
   \leq C \rho^{-1 +o(1)},  
   \label{eq-y49} \\
   & |h(0, \rho)| \leq  C\rho^{-2+o(1)} 
     \label{eq-y66}
    \end{align}
as $\rho\to\infty$, where $h$ is defined by \eqref{eq-y96}. 
\end{lemma}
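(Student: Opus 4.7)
The plan is to reduce the four bounds to elementary exponential Taylor estimates once we control the sizes of $g'(y_1)$, $g''(y_1)$, $g'''$ and $y_2$ in terms of $\rho$. As a preparation, since $y_1=g^{-1}(\rho)$, Lemma~\ref{lem3-2} together with \eqref{eq-y52-2} gives $g'(y_1)=\rho^{1/q+o(1)}$ (up to the harmless prefactor $(1-1/q+o(1))^{-1}$ when $q>1$), and hence $g''(y_1)=H(y_1)(g'(y_1))^{2}/g(y_1)\le C\rho^{2/q-1+o(1)}$ by (G1)--(G2); Lemma~\ref{lem3-0} supplies $|y_2|\le C\rho^{-1/q+o(1)}\log\rho$. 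The standing hypothesis gives $|\eta|\le \delta\rho^{-1+\e}/g'(y_1)\le C\rho^{\e-1-1/q+o(1)}$, which, since $\e\le 1/2$, is dominated by $|y_2|$. Therefore $|\eta+y_2|\le C\rho^{-1/q+o(1)}$, and squaring yields \eqref{eq-y50-1}.

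For \eqref{eq-y51} I would use that the hypothesis forces $|g'(y_1)\eta|\le \delta\rho^{-1+\e}\le \delta<1/100$ for $\rho\gg 1$, so the elementary inequality $|e^{x}-1|\le |x|/(1-|x|)\le 2|x|$, valid for $|x|\le 1/2$, gives $|h_{2}(\eta,\rho)|\le 2\delta\rho^{-1+\e}$, well below the stated threshold. The bound \eqref{eq-y49} follows in the same spirit: by the first paragraph, $\tfrac12|g''(y_1)(\eta+y_2)^{2}|\le C\rho^{2/q-1+o(1)}\cdot\rho^{-2/q+o(1)}=C\rho^{-1+o(1)}\to 0$, and another application of $|e^{x}-1|\le 2|x|$ for small $x$ yields \eqref{eq-y49}.

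The most delicate point is \eqref{eq-y66}. I would apply Taylor's theorem with integral remainder to $g$ at $y_1$, obtaining
\begin{equation*}
h(0,\rho)=g(y_1+y_2)-g(y_1)-g'(y_1)y_2-\tfrac12 g''(y_1)y_2^{2}=\tfrac12\int_{0}^{y_2}(y_2-t)^{2}\,g'''(y_1+t)\,dt,
\end{equation*}
and then invoke Lemma~\ref{lem2.7} to get $|g'''(\xi)|\le C(g'(\xi))^{3}/g(\xi)^{2}$ for every $\xi$ on the segment between $y_1$ and $y_1+y_2$. The principal technical point will be to show that $g'(\xi)\le C g'(y_1)$ and $g(\xi)\ge c\rho$ uniformly on this segment; this is where (G2) is essential, since the identity $(\log g')'=H\,g'/g$ combined with $|y_2|/y_1\to 0$ (coming from Lemma~\ref{lem3-2}) bounds the relative variation of $g'$ across the interval by $1+o(1)$. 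Plugging these estimates in, together with $|y_2|^{3}(g'(y_1))^{3}\le C(\log\rho)^{3}$ from \eqref{eq-y77}, yields $|h(0,\rho)|\le C(\log\rho)^{3}/\rho^{2}=C\rho^{-2+o(1)}$, as claimed. The only step where I expect to have to slow down is precisely this uniform comparison of $g$ and $g'$ across the $y_2$-shift; everything else is a direct application of the preliminary lemmas and Taylor's theorem.
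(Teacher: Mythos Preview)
Your proposal is correct and matches the paper's approach almost exactly: the same size estimates on $g'(y_1)$, $g''(y_1)$, $y_2$ and $\eta$, the same elementary exponential inequalities for $h_2$ and $h_3$, and the same third-order Taylor remainder combined with Lemma~\ref{lem2.7} for $h(0,\rho)$. The one place you can simplify is the comparison of $g'(\xi)$ with $g'(y_1)$ across the $y_2$-shift: since $y_2<0$ (Lemma~\ref{lem3-0}) and $g$ is convex (Lemma~\ref{lem2-2}), one has $g'(\xi)\le g'(y_1)$ for every $\xi\in[y_1+y_2,y_1]$ by monotonicity of $g'$, so the log-derivative argument via $(\log g')'=H g'/g$ is unnecessary; similarly $g(\xi)\ge g(y_1+y_2)\ge (1-O(\rho^{-1}\log\rho))\rho$ follows from monotonicity of $g$ and the convexity inequality $g(y_1+y_2)\ge g(y_1)+g'(y_1)y_2$.
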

\begin{proof}
%By \eqref{eq-y52} and $y_{1} = g^{-1}(\rho)$, 
%we have \eqref{eq-y50-1}. By \eqref{eq-y12}, 
%$g(y_{1}) = \rho$, 
%\eqref{eq2-1} and \eqref{eq-y13}, we have 
 %   \[
 %   y_{2} \leq C \frac{\log \rho}{g'(y_{1})} 
 %   = C \frac{g(y_{1})}{y_{1}  
 %   g'(y_{1})} y_{1} \frac{\log \rho}{\rho}
 %   \leq 
 %   C  y_{1} \frac{\log \rho}{\rho} 
 %   \leq \rho^{-\frac{1}{q} + o(1)} \log \rho. 
 %   \]

%    It follows from \eqref{eq2-1} and 
%    $\eta(\rho) = 
%    O(\rho^{-\frac{3}{2} - \varepsilon})$ %that 
%    \[
%    g'(y_{1})\eta \leq C g(y_{1})(\log %g(y_{1}))^{k} 
%    \leq C \rho (\log \rho)^{k} 
%    \rho^{- \frac{3}{2} - \e} 
%    =  C \rho^{- \frac{1}{2} - \e} (\log %\rho)^{k}. 
%    \]
%This implies that 
%    \[
%    |e^{g'(y_{1})\eta} - 1| \leq C
%    |g'(y_{1}) \eta| \leq  
%    C \rho^{- \frac{1}{2} - \e} (\log
    % \rho)^{k}.
%    \]
%Thus, \eqref{eq-y48} holds. 
We first remark that by using \eqref{shape-y2} and \eqref{eq-y52-2}, one has 
\begin{equation}
\label{eq-daiji-1}
|\eta|\le O(\rho^{-1-\frac{1}{q}+\e+o(1)})=o(|y_2|).
\end{equation}
Thus \eqref{eq-y50-1} follows from \eqref{eq-y51-2} and \eqref{shape-y2}. Then, we have 
\[|g'(y_1)\eta|\le 2\delta \rho^{-1+\e}
\qquad\text{if $\rho$ is sufficiently large.}  \]
Thus, we obtain
\[|h_2(\eta, \rho)| \le 4 g'(y_1)|\eta|= 8\delta\rho^{-1+\e}\qquad\text{if $\rho$ is sufficiently large.}
\]
In addition, it follows from the assumption (G1), 
\eqref{eq-y4} and Lemma \ref{lem3-2} that 
    \begin{equation} \label{eq-y54}
    g''(y_{1}) 
    = G(\rho) \frac{(g'(y_{1}))^{2}}
    {g(y_{1})} 
    \leq C \frac{(g'(y_{1}))^{2}}
    {g(y_{1})} 
    \leq Cg(y_{1})^{\frac{2}{q} -1 +o(1)}
    \leq C\rho^{\frac{2}{q}-1+o(1)}. 
    \end{equation}
Thus, we obtain that 
   \begin{equation} \label{eq-y53}
    g''(y_{1})\eta^{2} 
    \leq C\delta^2 \rho^{-1}g'(y_1)^2 
    \times g'(y_1)^{-2}\rho^{-2 + 2\e} = 
    C\delta^2\rho^{-3 + 2\e} \qquad\text{if $\rho$ is sufficiently large.}  
    \end{equation}
In addition, it follows from \eqref{eq-y51-2} and 
\eqref{eq-y54} that 
\begin{equation} \label{eq-y16}
\begin{split}
   g''(y_{1})y^2_2\le C \rho^{-1+o(1)}.
\end{split}
\end{equation}
By \eqref{eq-daiji-1}, \eqref{eq-y53} and \eqref{eq-y16}, one has  
    \[
    |h_{3}(\eta, \rho)| = 
    |e^{\frac{1}{2}
    g''(y_{1})(\eta+y_2)^2} -1| 
    \leq C g''(y_{1})(\eta + y_{2})^{2}
    \leq C \rho^{-1+o(1)}.  
    \]
Thus, we obtain \eqref{eq-y49}.  

Finally, we shall show \eqref{eq-y66}. 
It follows from the convexity of $g$ (see Lemma 
\ref{lem2-2}) and \eqref{eq-y51-2} that   
\begin{equation}
\label{eq-y-special-1}
\frac{g(y_1+y_2)}{g(y_1)}=1+\frac{g(y_1+y_2)-g(y_1)}{g(y_1)}\ge 1+ \frac{g'(y_1)y_2}{g(y_1)}\ge 1-O(\rho^{-1}\log\rho). 
\end{equation}
This together with 
the Taylor expansion, Lemma \ref{lem2-2} and 
\ref{lem2.7}, we have
   \[
    |h(0)| 
    \leq C |g'''(s)||y_{2}|^{3} 
    \leq C\frac{g'(s)^3}{g^2(s)}|y_{2}|^{3} 
    \leq C\frac{g'(y_1)^3}{g(y_1)^2}\frac{g^2(y_1)}{g^2(y_1+y_2)}|y_{2}|^{3}  
    \le C(\log \rho)^3 \rho^{-2 + o(1)},
    \] 
where $s=y_{1} + \theta y_2 \in (y_{1} + y_{2}, y_{1})$. 
This competes with the proof. 
\end{proof}

\begin{lemma}
\label{lem3-5}
Assume that $\limsup_{\rho\to\infty}g'(y_1)\rho^{1-\e}|\eta_i(\rho)|\le \delta$ for $i=1,2$
with some $0<\e\le \frac{1}{2}$ and $0<\delta<\frac{1}{100}$.   
Then, one has 
    \begin{align}
    & |h_{1}(\eta_{1}, \rho) - h_{1}(\eta_{2}, \rho)| 
    \leq C \rho^{- \frac{1}{q} + o(1)} (\log \rho) 
    |\eta_{1} - \eta_{2}|, 
    \label{eq-y61}\\
    & |h_{2}(\eta_{1}, \rho) - h_{2}(\eta_{2}, \rho)| 
    \leq C \rho^{\frac{1}{q} + o(1)} 
    |\eta_{1} - \eta_{2}|, 
    \label{eq-y48} \\
    & |h_{3}(\eta_{1}, \rho) - h_{3}(\eta_{2}, \rho)|
    \leq C \rho^{\frac{1}{q}-1 + o(1)} |\eta_{1} - \eta_{2}|, 
    \notag%\label{eq-y62}
    \\
    & |h(\eta_{1}, \rho) - h(\eta_{2}, \rho)|
    \leq C \rho^{\frac{1}{q}-1+ o(1)} |\eta_{1} - \eta_{2}|. \label{eq-y65} 
    \end{align}
\end{lemma}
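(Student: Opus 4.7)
The plan is to treat each of the four estimates by a mean-value (or factorization) argument in the variable $\eta$, and then bound the resulting derivative by combining the pointwise estimates established in Lemma \ref{lem3-0} and Lemma \ref{lem3-4}, together with the expression \eqref{eq-y52-2} for $g'/g$ and the $g'''$ bound from Lemma \ref{lem2.7}. In all cases, the smallness hypothesis $g'(y_1)\rho^{1-\varepsilon}|\eta_i| \leq \delta$ together with \eqref{eq-daiji-1} ensures that $|\eta_i|$ is dominated by $|y_2|$, so $\eta_i + y_2 = O(|y_2|)$.

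For $h_1$ I factor
\[
h_1(\eta_1,\rho) - h_1(\eta_2,\rho) = (\eta_1-\eta_2)\bigl(\eta_1+\eta_2+2y_2\bigr),
\]
so the claim \eqref{eq-y61} follows directly from $|\eta_i|\ll |y_2|$ and the estimate $|y_2| \leq C\rho^{-1/q+o(1)}(\log\rho)$ in \eqref{eq-y51-2}. For $h_2$, $\partial_\eta h_2 = g'(y_1)\,e^{g'(y_1)\eta}$; since $|g'(y_1)\eta_i|\leq 2\delta\rho^{-1+\varepsilon}$ is small, the exponential is bounded by a universal constant, and combined with $g'(y_1) \leq C\rho^{1/q+o(1)}$ (from \eqref{eq-y52-2} and \eqref{eq-y4}) this gives \eqref{eq-y48}. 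For $h_3$, differentiation yields $\partial_\eta h_3 = g''(y_1)(\eta+y_2)\,e^{\frac{1}{2}g''(y_1)(\eta+y_2)^2}$; by \eqref{eq-y49} the exponential factor is uniformly bounded, and plugging in $g''(y_1)\leq C\rho^{2/q-1+o(1)}$ from \eqref{eq-y54} together with $|\eta+y_2|\leq C|y_2|\leq C\rho^{-1/q+o(1)}\log\rho$ produces the desired bound $C\rho^{1/q-1+o(1)}|\eta_1-\eta_2|$, with the logarithm absorbed into $o(1)$.

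The main obstacle is the estimate \eqref{eq-y65} on $h$, and its proof is the most delicate of the four. Differentiating \eqref{eq-y96} in $\eta$ gives
\[
\partial_\eta h(\eta,\rho) = g'(y_1+y_2+\eta) - g'(y_1) - g''(y_1)(\eta+y_2),
\]
and applying the second-order Taylor expansion of $g'$ at $y_1$ yields $\partial_\eta h = \tfrac{1}{2}g'''(s)(\eta+y_2)^2$ for some $s$ between $y_1$ and $y_1+y_2+\eta$. By Lemma \ref{lem2.7} one has $|g'''(s)|\leq C g'(s)^3/g(s)^2$. To exchange $s$ for $y_1$ I reuse the convexity argument behind \eqref{eq-y-special-1}: since $|y_2+\eta|/y_1 \to 0$, monotonicity and convexity of $g$ (Lemma \ref{lem2-2}) give $g(s)\sim g(y_1)=\rho$, and then \eqref{eq-y52-2} promotes this to $g'(s)\sim g'(y_1) = \rho^{1/q+o(1)}$. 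Therefore
\[
|\partial_\eta h| \leq C\,\frac{g'(y_1)^3}{g(y_1)^2}\,|\eta+y_2|^2 \leq C\rho^{3/q-2+o(1)}\cdot \rho^{-2/q+o(1)} = C\rho^{1/q-2+o(1)},
\]
where I used \eqref{eq-y50-1} for $|\eta+y_2|^2$. This is in fact strictly stronger than the claimed $\rho^{1/q-1+o(1)}$, so \eqref{eq-y65} follows by the mean value theorem. The only subtlety worth highlighting is the transfer from $g'(s),g(s)$ to $g'(y_1),g(y_1)$; everything else reduces to arithmetic with the asymptotics already recorded in Section \ref{sec-pre}.
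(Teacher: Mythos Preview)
Your proof is correct and, for $h_1$, $h_2$, $h_3$, follows essentially the same mean-value/factorization route as the paper. For the last estimate \eqref{eq-y65} there is a genuine (if minor) difference worth noting: the paper splits $h(\eta_1)-h(\eta_2)$ directly into
\[
\bigl|g(y_1{+}y_2{+}\eta_1)-g(y_1{+}y_2{+}\eta_2)-g'(y_1)(\eta_1-\eta_2)\bigr|+\tfrac12\bigl|g''(y_1)(\eta_1-\eta_2)(\eta_1+\eta_2+2y_2)\bigr|,
\]
applies the mean value theorem once more to the first term, and arrives at a bound of order $g''(y_1)\,|y_2|\sim\rho^{1/q-1+o(1)}$. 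Your route---differentiating $h$ in $\eta$ and recognizing $\partial_\eta h$ as the second-order Taylor remainder of $g'$ at $y_1$---is cleaner and yields $|g'''(s)|\,|y_2+\eta|^2\sim\rho^{1/q-2+o(1)}$, a full power of $\rho$ sharper than what the paper records. Both are valid; your argument buys a better bound with less bookkeeping, while the paper's splitting avoids invoking Lemma~\ref{lem2.7}. One small simplification in your write-up: the transfer $g'(s)\le g'(y_1)$ follows immediately from the monotonicity of $g'$ (convexity of $g$, Lemma~\ref{lem2-2}), so you do not actually need to route through \eqref{eq-y52-2} for that step.
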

\begin{proof}
By \eqref{eq-y51} and \eqref{eq-daiji-1}, 
we obtain 
    \[
    |h_{1}(\eta_{1}, \rho) - h_{1}(\eta_{2}, \rho)| 
    =
    |(\eta_{1} + \eta_{2} + 2 y_{2})||\eta_{1} - \eta_{2}| 
    \leq C |y_{2}||\eta_{1} - \eta_{2}| 
    \leq C \rho^{- \frac{1}{q} + o(1)} (\log \rho)
    |\eta_{1} - \eta_{2}|. 
    \]
Thus, \eqref{eq-y61} holds. Moreover, it follows that 
    \begin{equation} \label{eq-y64}
    g'(y_{1})|\eta_{2}| 
    \leq 2\delta\rho^{-1+ \e}\qquad\text{if $\rho$ is sufficiently large.}  
    \end{equation}  
This together with \eqref{eq-y50} yields 
that 
    \[
    \begin{split}
    |h_{2}(\eta_{1}, \rho) - h_{2}(\eta_{2}, \rho)| 
    = |e^{g'(y_{1})\eta_{1}} - 
    e^{g'(y_{1}) \eta_{2}}| 
    \leq e^{g'(y_{1}) \eta_{2}} 
    |e^{g'(y_{1})(\eta_{1} - \eta_{2})} - 1| 
    & \leq e^{ 
    \rho^{-1+\e}}g'(y_{1})|\eta_{1} -\eta_{2}| 
    \\
    & \leq C \rho^{\frac{1}{q} + o(1)} 
    |\eta_{1} - \eta_{2}|. 
    \end{split}
    \]
We observe from \eqref{eq-daiji-1}--
\eqref{eq-y16} that 
    \[
    \begin{split}
    |h_{3}(\eta_{1}, \rho) - h_{3}(\eta_{2}, \rho)| 
    & = \biggl|e^{\frac{1}{2}g''(y_{1})
    (\eta_{1}+y_2)^2} - 
    e^{\frac{1}{2}g''(y_{1})
    (\eta_{2}+y_2)^2} \biggl| \\
    & = e^{\frac{1}{2}g''(y_{1})
    (\eta_{2}+y_2)^2} 
    |e^{\frac{1}{2}g''(y_{1}) 
    \left((\eta_{1} + y_{2})^{2} 
    - (\eta_{2} + y_{2})^{2}\right)} - 1| \\
    & \leq 
    C |g''(y_{1}) 
    \left((\eta_{1} + y_{2})^{2} 
    - (\eta_{2} + y_{2})^{2}\right)| \\
    & \leq C \rho^{\frac{2}{q}-1+o(1)} 
    y_{2} |\eta_{1} - \eta_{2}| \\
    & \leq C \rho^{\frac{1}{q}-1 + o(1)}  
    |\eta_{1} - \eta_{2}|. % \\
   % & \leq C \rho^{1 - \frac{1}{q} + o(1)} (\log \rho)
    %^{2 k + 1} |\eta_{1} - \eta_{2}|. 
    \end{split}
    \]

Finally, by using \eqref{eq-y-special-1}, we obtain for any $y_1+y_2\le s\le y_1$ that 
\begin{equation*}
\frac{|g''(y_1)-g''(s)|}{g''(y_1)}\le \frac{|g'''(t)|}{g''(y_1)}|y_2|\le\frac{g'(t)^3}{g^2(t)g''(y_1)}|y_2|\le \frac{g'(y_1)^3}{g^2(y_1)g''(y_1)}\frac{g^2(y_1)}{g^2(y_1+y_2)}|y_2|\le  C\rho^{-1}\log\rho,
\end{equation*}
where $s\le t\le  y_1$.
Therefore, by \eqref{eq-y51-2}, \eqref{eq-daiji-1} and \eqref{eq-y54},  
we have
\begin{align*}
    |h(\eta_{1})-h(\eta_{2})|&\le |g(y_1+y_2+\eta_{1})-g(y_1+y_2+\eta_{2})-g'(y_1)(\eta_{1}-\eta_{2})|+\frac{1}{2}|g''(y_1)(\eta_{1}-\eta_{2})(\eta_{1}+\eta_{2}+2y_2)|\\
    &\le \left(|g'(y_1+y_2+\eta_{1}+\theta(\eta_{2}-\eta_{1}))-g'(y_1)|+C|g''(y_1)||y_2|\right)|\eta_{1}-\eta_{2}|\\
    &\le C (g''(s) +g''(y_1))|y_2||\eta_{1}-\eta_{2}|\\
    &\le C\rho^{\frac{1}{q}-1+o(1)}|\eta_{1}-\eta_{2}|, 
\end{align*}
where $0\le \theta\le 1$ and $y_1+y_2\le s\le y_1$.
This completes the proof. 
\end{proof}
    
\begin{lemma}\label{lem-nest}
Assume that $\limsup_{\rho\to\infty}g'(y_1)\rho^{1-\e}|\eta_i(\rho)|\le \delta$ for $i=1,2$ with some $0<\e\le 1/2$ and $\delta>0$. Then, we have
\begin{align}
%& \biggl|\frac{a''(\tau)}{a(\tau)}(\eta_{1} 
%- \eta_{2}) \biggl| \leq \rho^{-2} |\eta_{1} - \eta_{2}|, 
%\label{eq-y47}\\
& |N_{1}(\eta_{1}, \rho) - 
N_{1}(\eta_{2}, \rho)| \leq 
C\delta\rho^{-2+\e}|\eta_{1} - \eta_{2}|,
\label{eq-y63}\\
& |N_{2, 1}(\eta_{1}, \rho) - 
N_{2, 1}(\eta_{2}, \rho)| \leq 
C \rho^{-2 + o(1)} 
|\eta_{1} - \eta_{2}|, 
\label{eq-y43}
\\
& |N_{2, 2}(\eta_{1}, \rho) - 
N_{2, 2}(\eta_{2}, \rho)| \leq 
C \rho^{-2 + o(1)}
|\eta_{1} - \eta_{2}|, 
\label{eq-y44} \\
& |N_{2, 3}(\eta_{1}, \rho) - 
N_{2, 3}(\eta_{2},\rho)| \leq 
C  \rho^{-3 + o(1)}|\eta_{1} - \eta_{2}|, 
\label{eq-y45} \\
& |N_{2, 4}(\eta_{1}, \rho) - 
N_{2, 4}(\eta_{2}, \rho)| \leq 
C \rho^{-3 + o(1)}|\eta_{1} - \eta_{2}|, 
\label{eq-y98}\\
&|N_{3}(\eta_{1},\rho)-N_{3}(\eta_{2},\rho)|\le C\rho^{-2+o(1)}|\eta_{1}-\eta_{2}|.
\label{eq-y46}  
\end{align}
Moreover, we have
\begin{align} \label{eq-y97}
|N_{2,3}(0, \rho)|\le \rho^{-\frac{1}{q}-3+o(1)}, \hspace{4mm}|N_3(0, \rho)| \le \rho^{-\frac{1}{q}-3+o(1)}.
\end{align}
\end{lemma}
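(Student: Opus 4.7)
The overall plan is to exploit the fact that each $N_{i}$ is already exhibited in the paper as a product of the elementary quantities $h_{1}(\eta,\rho)$, $h_{2}(\eta,\rho)$, $h_{3}(\eta,\rho)$ (and, for $N_{3}$, of $e^{h(\eta)}-1$) multiplied by explicit algebraic prefactors built from $g'(y_{1})$, $g''(y_{1})$ and $G(\rho)/\rho$. Hence I will reduce every Lipschitz estimate to an application of the telescoping identity $AB-CD=A(B-D)+D(A-C)$ together with the size bounds \eqref{eq-y50-1}--\eqref{eq-y66} and the Lipschitz bounds \eqref{eq-y61}--\eqref{eq-y65}; what remains is to track polynomial factors in $\rho$ using $g'(y_{1})\asymp\rho^{1/q+o(1)}$ (from \eqref{eq-y52-2}), $g''(y_{1})\le C\rho^{2/q-1+o(1)}$ (from \eqref{eq-y54}), $e^{g'(y_{1})y_{2}}=4G/(\rho g'(y_{1}))\le C\rho^{-1-1/q+o(1)}$ (from \eqref{eq-y7}), and $|y_{2}|\le C\rho^{-1/q+o(1)}\log\rho$ (from \eqref{eq-y51-2}).

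For \eqref{eq-y63} I will apply the mean value theorem to $e^{g'(y_{1})\eta}-1-g'(y_{1})\eta$, producing the factor $g'(y_{1})(e^{g'(y_{1})\eta^{*}}-1)\le C\delta g'(y_{1})\rho^{-1+\e}$, which combined with the prefactor $e^{g'(y_{1})y_{2}}$ yields the claim. For \eqref{eq-y43} I will use the algebraic identity $\eta^{2}+2\eta y_{2}=h_{1}(\eta)-y_{2}^{2}$, after which \eqref{eq-y61} applies directly. For \eqref{eq-y44}--\eqref{eq-y98} I will split the differences of products by the telescoping identity and combine \eqref{eq-y48}, \eqref{eq-y49} with the size bounds on $h_{1}, h_{2}$; the crucial observation is that the Taylor remainder $e^{\frac{1}{2}g''(y_{1})h_{1}}-1-\frac{1}{2}g''(y_{1})h_{1}$ is of order $(g''(y_{1})h_{1})^{2}$, so it carries an extra $\rho^{-1+o(1)}$ over the naive linear bound, and this is exactly what sharpens the exponent to $\rho^{-3+o(1)}$ in \eqref{eq-y45} and \eqref{eq-y98}. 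For \eqref{eq-y46} the Lipschitz constant is governed by \eqref{eq-y65}, while the pre-exponential $e^{g'(y_{1})(\eta+y_{2})+\frac{1}{2}g''(y_{1})(\eta+y_{2})^{2}}$ is bounded by $e^{g'(y_{1})y_{2}}$ up to a $1+o(1)$ multiplicative error, and $|e^{h(\eta)}-1|$ is absorbed by \eqref{eq-y65}.

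For the static bounds \eqref{eq-y97} I will set $\eta=0$ and use $|e^{x}-1-x|\le Cx^{2}$ with $x=\frac{1}{2}g''(y_{1})y_{2}^{2}=O(\rho^{-1+o(1)})$ (by \eqref{eq-y16}), together with $|e^{h(0)}-1|\le C|h(0)|\le C\rho^{-2+o(1)}$ (by \eqref{eq-y66}); the prefactor $G/(\rho g'(y_{1}))$ then produces the final factor $\rho^{-1-1/q+o(1)}$. The main obstacle is exponent bookkeeping: each factor scales like $\rho^{a/q+b+o(1)}$ with $a,b$ of both signs, and the target bounds are sharp in the sense that overshooting by a single power of $\rho$ would break the contraction argument in the next subsection. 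Particular care is needed for \eqref{eq-y45}, \eqref{eq-y98} and the first half of \eqref{eq-y97}, where the quadratic (rather than linear) nature of the Taylor remainder in $g''(y_{1})h_{1}$ must be preserved; a naive linear-remainder estimate would lose exactly one power of $\rho$ and fail to reach $\rho^{-3+o(1)}$.
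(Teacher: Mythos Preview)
Your plan is correct and follows essentially the same route as the paper: each $N_i$ is expressed as a product of the prefactor $G(\rho)/(\rho g'(y_1))$ with combinations of $h_1,h_2,h_3,e^{h}-1$, differences are handled by telescoping $AB-CD=A(B-D)+D(A-C)$ together with Lemmata~\ref{lem3-4}--\ref{lem3-5}, and the key gain of one extra power of $\rho$ in \eqref{eq-y45}, \eqref{eq-y98}, \eqref{eq-y97} comes from retaining the quadratic Taylor remainder $e^{x}-1-x=O(x^{2})$ with $x=\tfrac12 g''(y_1)(\eta+y_2)^2=O(\rho^{-1+o(1)})$. The paper writes the $N_3$ difference explicitly as a three-factor telescoping of $(1+h_2)(1+h_3)(e^{h}-1)$, which you should do as well rather than relying only on \eqref{eq-y65}; otherwise the approaches coincide.
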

\begin{proof}

We first show \eqref{eq-y43}. 
Note that by \eqref{eq-y59},  
assumption (G1) and \eqref{eq-y52-2}, when $q=1$, one has 
\begin{equation} \label{eq-y60}
    \frac{G(\rho)}{\rho 
    g'(y_{1})} g''(y_{1}) 
    \leq \frac{G(\rho)g'(y_{1})}{\rho^{2}} 
    \frac{g(y_{1})g''(y_{1})}{(g'(y_{1}))^{2}}
    \leq C \frac{g'(y_{1})}{\rho^{2}}
    \leq \frac{C}{\rho^{2-\frac{1}{q}+o(1)}}.
    \end{equation}
%Similarly, when $q > 1$, 
%by Lemma \ref{lem3-2}, we have
%\begin{equation}\label{eq-y60-1}
% \frac{G(\rho)}{\rho 
%    g'(y_{1})} g''(y_{1})\leq \frac{g'(y_{1})}{\rho^{2}}\le \frac{C}{\rho^{2-\frac{1}{q}+o(1)}}
%\end{equation}
This together with \eqref{eq-y51-2} and \eqref{eq-daiji-1}
yields that  
    \[
    \begin{split}
    |N_{2, 1}(\eta_{1}, \rho) - 
    N_{2, 1}(\eta_{2}, \rho)|
    & \leq \frac{G(\rho)}{\rho g'(y_{1})} 
    g''(y_{1})
    \left(|\eta_{1} + \eta_{2}||\eta_{1} - \eta_{2}| 
    + 2 y_{2} |\eta_{1} - \eta_{2}|
    \right) \\ 
    & = \frac{C}{\rho^{2-\frac{1}{q}+o(1)}}
    \left(|\eta_{1} + \eta_{2}| + 2 y_{2} 
    \right) |\eta_{1} - \eta_{2}| \\
    & \leq \frac{C}{\rho^{2-\frac{1}{q}+o(1)}} y_{2}|\eta_{1} - \eta_{2}| \\
    & \leq C \rho^{-2+ o(1)} 
    |\eta_{1} - \eta_{2}|. 
    \end{split}
    \]
Next, we give the proof of \eqref{eq-y44}.
Note that $N_{2, 2}(\eta, \rho) = \frac{G(\rho)}{2\rho 
    g'(y_{1})} g''(y_{1}) 
    h_{1}(\eta, \rho) h_{2}(\eta, \rho)$. 
This together with \eqref{eq-y60}, 
\eqref{eq-y54}, 
\eqref{eq-y61} and \eqref{eq-y48}
yields that 
     \[
    \begin{split}
     |N_{2, 2}(\eta_{1}, \rho) - 
    N_{2, 2}(\eta_{2}, \rho)|
    & \leq 
    \frac{G(\rho)}{2 \rho 
    g'(y_{1})} g''(y_{1}) 
   |h_{1}(\eta_{1}, \rho)| 
   |h_{2}(\eta_{1}, \rho) - h_{2}(\eta_{2}, \rho)| 
    \\
    & \quad + \frac{G(\rho)}{
    2\rho g'(y_{1})} g''(y_{1}) 
    |h_{2}(\eta_{2}, \rho)| 
   |h_{1}(\eta_{1}, \rho) - h_{1}(\eta_{2}, \rho)| \\
   & \leq C \rho^{\frac{1}{q}-2+o(1)}
   \rho^{-\frac{2}{q} + o(1)} (\log \rho) 
   \rho^{\frac{1}{q} + o(1)} |\eta_{1} - \eta_{2}| \\
   & \quad+ C  \rho^{\frac{1}{q}-2+o(1)}
   \rho^{-1+ \e}  
   \rho^{-\frac{1}{q} + o(1)} (\log \rho) |\eta_{1} - \eta_{2}| \\
   & \leq C \rho^{-2+ o(1)}
   (\log \rho)|\eta_{1} - \eta_{2}|. 
    \end{split}
    \]

Next, we show \eqref{eq-y45}. 
By \eqref{eq-daiji-1}, \eqref{eq-y53} and \eqref{eq-y16}, we obtain 
    \[
    \begin{split}
    |N_{2, 3}(\eta_{1}, \rho) - 
    N_{2, 3}(\eta_{2}, \rho)|
    & = \frac{G(\rho)}{\rho g'(y_{1})} \biggl|e^{\frac{1}{2}g''(y_{1}) (\eta_{2}+ y_{2})^{2}} 
    (e^{\frac{1}{2}g''(y_{1}) \left((\eta_{1}+ y_{2})^{2} 
    - (\eta_{2}+ y_{2})^{2}\right)} - 1) 
    - \frac{1}{2}g''(y_{1}) \left((\eta_{1}+ y_{2})^{2} 
    - (\eta_{2}+ y_{2})^{2}\right) \biggl| \\
    & \leq \frac{G(\rho)}{\rho g'(y_{1})} \biggl|e^{\frac{1}{2}g''(y_{1})(\eta_{2}+ y_{2})^{2}} 
    - 1 \biggl| \biggl|
    e^{\frac{1}{2}g''(y_{1}) \left((\eta_{1}+ y_{2})^{2} 
    - (\eta_{2}+ y_{2})^{2}\right)} - 1\biggl| \\
    & \quad 
    + \frac{G(\rho)}{\rho g'(y_{1})} \biggl|e^{\frac{1}{2}g''(y_{1}) \left((\eta_{1}+ y_{2})^{2} 
    - (\eta_{2}+ y_{2})^{2}\right)} - 1
    - \frac{1}{2}g''(y_{1}) \left((\eta_{1}+ y_{2})^{2} 
    - (\eta_{2}+ y_{2})^{2}\right) \biggl| \\
    & \leq C \frac{G(\rho)}{\rho g'(y_{1})} |g''(y_{1})(\eta_{2}+ y_{2})^{2}|
    |g''(y_{1}) \left((\eta_{1}+ y_{2})^{2} 
    - (\eta_{2}+ y_{2})^{2}\right)| \\
    & \quad + C \frac{G(\rho)}{\rho g'(y_{1})} |g''(y_{1}) \left((\eta_{1}+ y_{2})^{2} 
    - (\eta_{2}+ y_{2})^{2} \right)|^{2} \\
    & \leq C \frac{G(\rho)}{\rho g'(y_{1})} g''(y_{1})^2 y_{2}^{2} 
    |2y_{2} + \eta_{1} + \eta_{2}||\eta_{1} - \eta_{2}| 
    + C \frac{G(\rho)}{\rho g'(y_{1})} (g''(y_{1}) (2y_{2} + \eta_{1} + \eta_{2}) 
    (\eta_{1} - \eta_{2}))^2.
    \end{split}
    \]
It follows from \eqref{eq-y60}, \eqref{eq-daiji-1}, \eqref{eq-y54},
\eqref{eq-y16} and \eqref{eq-y51} that 
    \[
    \begin{split}
    |N_{2, 3}(\eta_{1}, \rho) - 
    N_{2, 3}(\eta_{2}, \rho)|
    & \leq C \rho^{-2 + \frac{1}{q}+o(1)} 
    \rho^{\frac{2}{q} -1+o(1)}
    \rho^{- \frac{3}{q} + o(1)} |\eta_{1} - \eta_{2}| 
    \\
    & \quad + C \rho^{-1-\frac{1}{q}+o(1)} 
    \rho^{\frac{4}{q}-2+o(1)}
    \rho^{-\frac{2}{q}+o(1)}
    \rho^{-\frac{1}{q}-1+\e+o(1)}
    |\eta_{1} - \eta_{2}| \\
    & \leq C  \rho^{-3+o(1)} |\eta_{1} - \eta_{2}|. 
    \end{split}
    \]
    
Next, we will prove \eqref{eq-y97}. 
It follows from Lemmata \ref{lem3-2} and 
\eqref{eq-y51-2} that 
\begin{align*}
    N_{2,3}(0,\rho)&=\frac{G(\rho)}{\rho g'(y_1)}(e^{\frac{1}{2}g''(y_1)y_{2}^2}-1-\frac{1}{2}g''(y_1)y_{2}^2)\\
&\le C\frac{G(\rho)}{\rho g'(y_1)} g''(y_1)^2 y_{2}^4\\
&\le C \rho^{-\frac{1}{q}-1} 
\rho^{\frac{4}{q}-2} \rho^{- \frac{4}{q}+o(1)} 
\le C\rho^{-\frac{1}{q}-3+o(1)}.
\end{align*}

Observe that 
    \[
    N_{2, 4} (\eta, \rho) = 
    h_{2}(\eta, \rho) N_{2, 3}(\eta, \rho). 
    \]
We have by \eqref{eq-y51}, 
\eqref{eq-y48} and 
\eqref{eq-y97} that 
    \[
    \begin{split}
    |N_{2, 4} (\eta_{1}, \rho) - 
    N_{2, 4} (\eta_{2}, \rho)| 
    & \leq |h_{2}(\eta_{1}, \rho)| 
    |N_{2, 3} (\eta_{1}, \rho) - 
    N_{2, 3} (\eta_{2}, \rho)| 
    + |h_{2}(\eta_{1}, \rho) - 
    h_{2}(\eta_{2}, \rho)| |N_{2, 3}(\eta_{2}, \rho)| 
    \\
    & \leq C \rho^{-1+\e}\rho^{-3+o(1)} 
    |\eta_{1} - \eta_{2}| + 
    C \rho^{\frac{1}{q}+ o(1)}
    \rho^{-\frac{1}{q}-3 + o(1)}|\eta_{1} - \eta_{2}| \\
    & \leq C  \rho^{-3+o(1)}|\eta_{1}-\eta_{2}|. 
    \end{split}
    \]
Thus, we obtain \eqref{eq-y98}. 
    
We show \eqref{eq-y63}. 
It follows from \eqref{eq-y7} that 
    \[
    N_{1}(\eta, \rho) 
    = \frac{G(\rho)}{\rho g'(y_{1})} 
    (e^{g'(y_{1}) \eta} - 1 - g'(y_{1}) \eta). 
    \]
Then, by \eqref{eq-y64}, we obtain 
    \[
    \begin{split}
    |N_{1} (\eta_{1}, \rho) - 
    N_{1} (\eta_{2}, \rho)| 
    & \leq \frac{G(\rho)}{\rho g'(y_{1})} 
    |e^{g'(y_{1}) \eta_{1}} - e^{g'(y_{1}) \eta_{2}} 
    - g'(y_{1}) (\eta_{1} - \eta_{2})| \\
    & = \frac{G(\rho)}{\rho g'(y_{1})} 
    |e^{g'(y_{1}) \eta_{2}} (e^{g'(y_{1}) 
    (\eta_{1} - \eta_{2})} - 1) 
    - g'(y_{1}) (\eta_{1} - \eta_{2})| \\
    & \leq 
    \frac{G(\rho)}{\rho g'(y_{1})} 
    |e^{g'(y_{1}) \eta_{2}} - 1|
    |e^{g'(y_{1}) 
    (\eta_{1} - \eta_{2})} - 1| \\
    & \quad + \frac{G(\rho)}{\rho g'(y_{1})}
     |e^{g'(y_{1}) 
    (\eta_{1} - \eta_{2})} - 1 
    - g'(y_{1}) (\eta_{1} - \eta_{2})| \\
    & \leq C \frac{G(\rho)}{\rho g'(y_{1})} 
    |g'(y_{1})\eta_{2}||g'(y_{1}) 
    (\eta_{1} - \eta_{2})| 
    + C \frac{G(\rho)}{\rho g'(y_{1})} 
    |g'(y_{1})(\eta_{1} - \eta_{2})|^{2} \\
    & \leq C \frac{G(\rho)}{\rho} |g'(y_{1})|
    |\eta_{2}||\eta_{1} - \eta_{2}| 
    + C \frac{G(\rho)}{\rho} |g'(y_{1})| 
    |\eta_{1} - \eta_{2}|^{2} \\
    & \leq C\delta\rho^{- 2+ \e}
    |\eta_{1} - \eta_{2}|.
    \end{split}
    \]
Thus, we see that \eqref{eq-y63} holds. 
    
In addition, we recall from 
\eqref{eq-y58}, \eqref{eq-y7},
\eqref{eq-y66}, \eqref{eq-daiji-1} and \eqref{eq-y65} that
\begin{align*}
    N_{3}(\eta,\rho)=\frac{1}{4}e^{g'(y_1)(\eta+y_2)+\frac{1}{2}g''(y_1)(\eta+y_2)^2}(e^{h(\eta)}-1)=\frac{G(\rho)}{\rho g'(y_1)}(1+h_2(\eta,\rho))(1+h_3(\eta,\rho))
    (e^{h(\eta)}-1) 
\end{align*}
and
\begin{equation*}
|h(\eta)|\le |h(0)|+|h(\eta)-h(0)|\le C\rho^{-2+o(1)}+C\rho^{\frac{1}{q}-1+o(1)}|\eta|\le C\rho^{-2+\e+o(1)}. 
\end{equation*}
Therefore, we have
 \[\begin{split}
    |N_{3}(\eta_{1}, \rho) - N_{3}(\eta_{2}, \rho)| 
    & \leq \frac{G(\rho)}{\rho g'(y_1)}(1+h_3(\eta_{1},\rho))|(e^{h(\eta_{1},\rho)}-1)||h_2(\eta_{1},\rho)-h_2(\eta_{2},\rho)|  \\
    & \quad + \frac{G(\rho)}{\rho g'(y_1)}(1+h_2(\eta_{2},\rho))|(e^{h(\eta_{1},\rho)}-1)||h_3(\eta_{1},\rho)-h_3(\eta_{2},\rho)|\\
    & \quad + \frac{G(\rho)}{\rho g'(y_1)}(1+h_2(\eta_{2},\rho))(1+h_3(\eta_{2},\rho))e^{h(\eta_{2},\rho)}|e^{h(\eta_{1},\rho)-h(\eta_{2},\rho)}-1| \\
    &\le \frac{C|\eta_{1}-\eta_{2}|}{\rho g'(y_1)}(\rho^{-2+\e+o(1)}\rho^{\frac{1}{q}+o(1)}+\rho^{-2+\e+o(1)}\rho^{\frac{1}{q}-1+o(1)}+\rho^{\frac{1}{q}-1+o(1)})\\
    &\le\rho^{-2+o(1)}|\eta_{1}-\eta_{2}|. 
    \end{split}
    \]
This yields \eqref{eq-y46}. 
    
Finally,  by \eqref{eq-y66}, we obtain that
\begin{align*}
    |N_3(0,\rho)|&=\frac{1}{4}e^{g'(y_1)y_2 +\frac{1}{2}g''(y_1)y_{2}^2}|
    e^{h(0. \rho)}-1|\\
    &\le C\frac{G(\rho)}{\rho g'(y_1)}e^{\frac{1}{2}g''(y_1)y_{2}^2}|h(0, \rho)|\\
    &\le \frac{C}{\rho g'(y_1)}(1+\frac{1}{2}g''(y_1)y_{2}^2)|h(0, \rho)|\\
    &\le \frac{C}{\rho^{3+\frac{1}{q}+o(1)}}.
\end{align*}
This completes the proof. 
\end{proof}
\begin{remark}
\label{rem3-1}
\rm{We point out that by the proofs of Lemmata \ref{lem3-4}, \ref{lem3-5} and \ref{lem-nest}, we can also obtain the following. For any $\e_0>0$, there exists $\Lambda$ sufficiently large such that if
$g'(y_1)\rho^{1-\e}|\eta_{i}|\le \delta$ for $i=1,2$ and all $\rho\ge \Lambda$ with some $0<\e\le\frac{1}{2}$ and $\delta>0$, then, all estimates in Lemma \ref{lem-nest} are satisfied with $o(1)=\e_0$ for any $\rho\ge \Lambda$.}
\end{remark}

\begin{proof}[Proof of Proposition 
\ref{prop-es}]
Observe that \eqref{eq-y23} can be written 
by the following:
\begin{equation*}
\eta(\rho) = \mathcal{T}[\eta](\rho),
\end{equation*}
in which
\begin{equation*}
\mathcal{T}[\eta](\rho)
%\pt =
%\int_{\rho}^{\infty} (\Phi(s) \Psi(\rho) - \Phi(\rho) \Psi(s))
%F(s, \eta) ds
%\pr
= \int_{\rho}^{\infty} 
a(\rho) a(\tau)
\sin ((b(\rho) - b(\tau)))
F(\tau, \eta) d \tau,
\end{equation*}
where
\begin{equation*}
F[\rho, \eta] = 
\frac{d^{2} y_2}{d \rho^2}(\rho)
+ \frac{G(\rho)}{2\rho g'(y_{1})} g''(y_{1}) y_{2}^{2}+\frac{1}{a(\rho)} \frac{d^2 a}{d \rho^2}(\rho)\eta 
+ N_{1}(\eta, \rho) 
+ \sum_{i = 1}^{4} N_{2, i} (\eta, \rho)
+ N_{3}(\eta,\rho).
\end{equation*}
Fix $0<\e\le \frac{1}{2}$. Let $\Lambda>0$ and $\delta>0$ be constants defined later. We define $\Sigma$ as a space of continuous functions on $[\Lambda, \infty)$ equipped with the following norm:
\begin{equation*}
\|\xi\| := \sup\left\{g'(y_1)|\rho|^{1- 
\e} |\eta(\rho)|
\colon \rho \geq \Lambda \right\}.
\end{equation*}
Then, we define the subspace
\begin{equation*}
\mathcal{X} = \left\{\xi \in \Sigma \colon \|\xi\| \leq \delta \right\}.
\end{equation*}
Then, it follows from \eqref{eq-y52}, \eqref{eq-y4} and Lemma \ref{lem3-1} that we obtain that
\begin{equation} \label{eq-contra-1}
\biggl| \int_{\rho}^{\infty}  
a(\rho) a(\tau)
\sin ((b(\rho) - b(\tau)))
\left(\frac{d^{2} y_2}{d \rho^2}(\tau)
+ \frac{G(\tau)}{\rho g'(y_{1})} g''(y_{1}) y_{2}^{2} \right) d\tau
\biggl|
\leq \frac{\delta}{3} g'(y_1(\rho))^{-1}\rho^{-1+\e}
\end{equation}
for any $\rho\ge \Lambda$ if $\Lambda$ is sufficiently large. Moreover, 
by Lemmata \ref{lem3-0}, \ref{lem-nest} and Remark \ref{rem3-1} we obtain
\begin{equation}
\label{nagai-1}
\biggl|\frac{1}{a(\rho)} \frac{d^2 a}{d \rho^2}(\rho)\eta 
+ N_{1}(\eta, \rho) 
+ \sum_{i = 1}^{4} N_{2, i} (\eta, \rho)
+ N_{3}(\eta,\rho)\biggl|
\leq C\delta \rho^{-2 + \e} |\eta| 
\leq C\delta^2 g'(y_1)^{-1}\rho^{-3+2\e}. 
\end{equation}
and
\begin{align}
\label{nagai-2}
\begin{split}
\biggl|\frac{1}{a(\rho)} \frac{d^2 a}{d \rho^2}(\rho)(\eta_1-\eta_2)\biggl|
+ \biggl|N_{1}(\eta_1, \rho)-N_{1}(\eta_1,\rho
)\biggl|+ \sum_{i = 1}^{4} \biggl|N_{2, i} (\eta_1, \rho)-N_{2, i} (\eta_2, \rho)\biggl|&+ \biggl|N_{3}(\eta_1,\rho)-N_{3}(\eta_2, \rho)\biggl|\\
&\leq C\delta\rho^{-2 + \e} |\eta_1-\eta_2|  
\end{split}
\end{align}
for any $\eta,\eta_1,\eta_2\in \mathcal{X}$ for any  $\rho\ge \Lambda$ if $\Lambda$ is sufficiently large.
We define $\Lambda$ so that the above three estimates are satisfied. 

We shall show that $\mathcal{T}$ maps $\mathcal{X}$
to itself. By \eqref{nagai-1}, we obtain
\begin{equation*}
\begin{split}
& \quad \biggl| \int_{\rho}^{\infty}
a(\rho) a(\tau)
\sin ((b(\rho) - b(\tau)))
\left(
\frac{1}{a(\rho)} \frac{d^2 a}{d \rho^2}(\rho)\eta + 
N_{1}(\eta, \tau) 
+ \sum_{i = 1}^{4} N_{2, i} (\eta, \tau)
+ N_{3}(\eta, \tau) \right)
d\tau \biggl|\\
& 
\leq C\delta^2 \int_{\rho}^{\infty} \rho^{\frac{1}{4}} \tau^{\frac{1}{4}}g'(y_1(\tau))^{-1}
\tau^{-3+ 2\e}  d\tau
\leq C\delta^2\rho^{\frac{1}{4}}g'(y_1(\rho))^{-1} \int_{\rho}^{\infty} \tau^{-\frac{11}{4}+ 2\e} d\tau
\leq C\delta^2 g'(y_1)^{-1}\rho^{-\frac{3}{2}+ 2 \e}.
\end{split}
\end{equation*}
for $\rho \geq \Lambda$. This together with \eqref{eq-contra-1} yields 
\begin{equation*}
|\mathcal{T}[\eta](\rho)| \leq \delta g'(y_1)^{-1} 
\rho^{-1+ \e}(\frac{1}{3}+C\delta \rho^{-1/2+\e})
\end{equation*}
for $\eta \in \mathcal{X}$. Thus, by choosing $\delta$ so that $C\delta<\frac{2}{3}$, we have $\mathcal{T}[\eta] \in \mathcal{X}$. 

Next, we will show that
$\mathcal{T}$ is a contraction mapping.
For $\eta_{1}, \eta_{2} \in \mathcal{X}$, we have
\begin{equation*}
\begin{split}
|\mathcal{T}[\eta_{1}](\rho) - 
\mathcal{T}[\eta_{2}](\rho)|
& \leq 
\int_{\rho}^{\infty}
|a(\rho) a(\tau) \sin (
(b(\rho) - b(\tau)))|
|
\frac{1}{a(\rho)} \frac{d^2 a}{d \rho^2}(\rho)
(\eta_{1}(\tau) - \eta_{2}(\tau))| d \tau \\
& \quad 
+ \int_{\rho}^{\infty}
|a(\rho) a(\tau) \sin (
(b(\rho) - b(\tau)))|
|N_{1}(\eta_{1}, \tau) - 
N_{2}(\eta_{2}, \tau)| d\tau \\
& \quad 
+ \sum_{i=1}^{4} \int_{\rho}^{\infty}
|a(\rho) a(\tau) \sin (
(b(\rho) - b(\tau)))|
|
N_{2, i} (\eta_{1}, \rho)
- N_{2, i} (\eta_{2}, \rho)| d\tau \\ 
& \quad + \int_{\rho}^{\infty}
|a(\rho) a(\tau) \sin (
(b(\rho) - b(\tau)))|
|N_{3}(\eta_{1}, \tau) - 
N_{3}(\eta_{2}, \tau)| d\tau.\\
\end{split}
\end{equation*}
It follows from \eqref{nagai-2} that
\begin{equation*}
\begin{split}
|\mathcal{T}[\eta_{1}](\rho) - \mathcal{T}[\eta_{2}](\rho)|
& \leq C\delta
\int_{\rho}^{\infty}
|a(\rho) a(\tau) \sin (
(b(\rho) - b(\tau)))|
\tau^{-2 + \e} |\eta_{1} - \eta_{2}| 
d\tau\\
& \leq
C \delta^2 \int_{\rho}^{\infty} \tau^{\frac{1}{4}} \rho^{\frac{1}{4}} g'(y_1(\tau))^{-1}
\tau^{-3+2\e}
\|\eta_{1} - \eta_{2}\| d \tau\\
& \leq
C\delta^2 g'(y_1(\rho))^{-1}\rho^{-\frac{3}{2}+2\e}
\|\eta_{1} - \eta_{2}\|
\end{split}
\end{equation*}
for any $\rho\ge \Lambda$.
This yields that
\begin{equation*}
\|\mathcal{T}[\eta_{1}](\rho) - \mathcal{T}[\eta_{2}](\rho)\|
\leq C\delta \rho^{-\frac{1}{2}+ \e} 
\|\eta_{1} - \eta_{2}\| 
< \frac{1}{2} \|\eta_{1} - \eta_{2}\|
\end{equation*}
for ant $\eta_1,\eta_2\in \mathcal{X}$
by choosing $\delta$ so that $C\delta<\frac{1}{2}$.
Thus, we find that $\mathcal{T}$ is a contraction mapping.
This completes the proof.    
\end{proof}
\begin{proof}[Proof of Corollary \ref{cor}]
By Proposition \ref{prop-es}, we have $y_{\infty} = 
y_1+y_2+o(g'(y_1)^{-1}\rho^{-1+\varepsilon})$. It follows from 
\eqref{eq-y4},  
\eqref{shape-y2} and 
\eqref{eq-y52-2} that 
\begin{align*}
    g(y_{\infty}) 
    & = g(y_1)+g'(y_1)(y_2+o(g'(y_1)^{-1}\rho^{-1+\varepsilon}))+\frac{1}{2}g''(\hat{y})(y_2 
    + o(g'(y_1)^{-1}\rho^{-1+\varepsilon}))^2\\
    & = \rho+(-2\log \rho + \log ((g/g')(y_{1}))+\log 4G)+J_0+o(\rho^{-1+\varepsilon})
\end{align*}
Here, 
$\hat{y} = y_{1} + 
\theta (y_{2} + o(g'(y_1)^{-1}\rho^{-1+\varepsilon}))$ for some 
$\theta \in (0, 1)$ and 
$2J_0=g''(\hat{y})(y_2+o(g'(y_1)^{-1}\rho^{-1+\varepsilon}))^2$. We remark that $o(g'(y_1)^{-1}\rho^{-1+\e})=o(|y_2|)$. Thus,
it follows from the mean value theorem, the assumption (G1),
Lemma \ref{lem2.7} and 
Lemma \ref{lem3-2} that 
    \begin{align*}
    |g''(\hat{y}) - g''(y_{1})| 
    \leq |g'''(\tau y_{1} + 
    (1 - \tau) 
    \hat{y})| |\hat{y} - y_{1}|
    & \leq C 
    \frac{g'^3}{g^2} 
    (\tau y_{1} + (1- \tau) 
    \hat{y}) |y_{2}| \\
    & \leq C g^{-2}(\tau y_{1} + (1- \tau) 
    \hat{y})g'^3(y_1)|y_2|\\
    &\le C (\log \rho)g(y_1)g''(y_1)\frac{g'(y_1)^2}{g''(y_1)g(y_1)} g^{-2}(\tau y_{1} + (1- \tau)) \\
    & \le C\frac{\log \rho}{\rho}g''(y_1) 
    \end{align*}
with some $0<\tau<1$. Here, the last inequality follows from the fact that
\begin{equation*}
   |g(y_1)-g(\tau y_1+(1-\tau)\hat{y})|\le Cg'(y_1)|y_2|\le o(g(y_1)).
\end{equation*}
    Then, using this,  
    \eqref{shape-y2},  
    the assumption (G1) and 
    \eqref{eq-y52-2}, we obtain 
\begin{equation*}
    J=g''(\hat{y})(y_2+o(g'(y_1)^{-1}\rho^{-1- \varepsilon}))^2\le 
    2g''(y_{1})(1+O(\log\rho/\rho))y^2_2\le \frac{C}{\rho}
    \frac{g''(y_{1})g(y_{1})}
    {g'^2(y_{1})}(\log \rho)^2 
    \le 
    \frac{C}{\rho^{1-\frac{\e}{2}}}.
\end{equation*}
%\textcolor{{Here, 
%we have used the convexity of $g$ for large %$u$ 
%(see Lemma \ref{lem2-2}).}
Therefore, using $g(y_{1}) = \rho$ 
(i.e. $g^{-1}(\rho) = y_{1}$), we have
\begin{align*} 
    g(y_{\infty}) 
    = \rho-2\log\rho+\log [\frac{g}{g'}(g^{-1}(\rho))]+\log 4H(g^{-1}(\rho))+o(\rho^{-1+\varepsilon}).
\end{align*}   
\end{proof}

We are now in a position to prove 
Theorem \ref{thm-sing}.
\begin{proof}[Proof of Theorem \ref{thm-sing}]
Put $U_{\infty}(r) = y_{\infty}(\rho)$, where $y_{\infty}(\rho)$
is the solution to \eqref{ODE} obtained in 
Proposition \ref{prop-es}.
Then, $U_{\infty}$ satisfies the following:
\begin{equation}\label{eq-y67} 
- \frac{d^{2} U_{\infty}}{d r^{2}} - \frac{1}{r} \frac{d U_{\infty}}{d r}
= f(U_{\infty}) \hspace{4mm} \mbox{for
$r \in (0, R_{\infty})$},
\end{equation}
where $R_{\infty} = e^{- \Lambda_{\infty}/2}$.
Note that $U_{\infty}$ is a solution to the ordinary differential
equation \eqref{eq-y65}. 
Thus, as long as $U_{\infty}$ remains bounded, 
we can extend $U_{\infty}$ in the direction 
of $r$. We shall show $U_{\infty}(r)$ exists for all 
$r > 0$. To this end, we define 
    \[
    E(r):= \frac{1}{2} \left(\frac{d U_{\infty}}{d r}
    (r)\right)^2 + \int_{r_0}^{U_{\infty}(r)} f(s) ds, 
    \]
where $r_{0} \in (0, R_{\infty})$. 
Then, we have 
    \[
    \frac{d E}{d r}(r) 
    = \frac{d U_{\infty}}{d r}
    (r)\frac{d^2 U_{\infty}}{d r^2}
    (r) + f(U_{\infty}(r)) \frac{d U_{\infty}}{d r}
    (r)
    = - \frac{1}{r} 
    \left(\frac{d U_{\infty}}{d r}(r) \right)^{2} 
    \leq 0. 
    \]
Thus, $E(r)$ is decreasing in $r>0$. 
This together with the positivity of $f$ implies 
that 
    \begin{equation} \label{eq-y109}
    E(r_{0}) \geq E(r) 
    \geq \frac{1}{2} \left(\frac{d U_{\infty}}{d r}
    (r)\right)^2. 
    \end{equation}
Therefore, there exists a constant $C_{0}>0$ 
such that $|\frac{d U_{\infty}}{d r}
    (r)| \leq C_{0}$. 
This yields that 
    \[
    |U_{\infty}(r)| \leq |U_{\infty}(r_0)| 
    + \int_{r_{0}}^{r} 
    \biggl|\frac{d U_{\infty}}{d s}(s)\biggl| ds
    \leq |U_{\infty}(r_0)| + C_{0} r. 
    \]
From this, we see that $U_{\infty}(r)$ exists 
for all $r > 0$. 

Then, all we have to show for Theorem \ref{thm-sing}
is that $U_{\infty}$ 
must have a zero at some point by contradiction.
Suppose to the contrary that 
$U_{\infty}(r) >0$ for all $0 < r < \infty$.
This yields that $U_{\infty}$ is monotone decreasing.
Indeed, otherwise, $U_{\infty}$ has a local 
minimum point at $r = r_{*} \in (0,
\infty)$. This implies that
$\partial^{2}_{r} U_{\infty}(r_{*}) \geq 0$ and
$\partial_{r} U_{\infty}(r_{*}) = 0$.
Then, from the equation \eqref{eq-y67}, we obtain
\begin{equation*}
0 \leq \frac{d^{2} U_{\infty}}{d r^{2}}(r_{*})
= - f(U_{\infty} (r_{*})) 
= e^{g(U_{\infty}(r_{*}))}< 0,
\end{equation*}
which is absurd.
\par
Note that $U_{\infty}$ is positive and monotone decreasing for $r > 0$. 
Thus, there exists a constant $C_{\infty} \geq 0$
such that $U_{\infty}(r) \to C_{\infty}$
as $r \to \infty$. 
This together with \eqref{eq-y67} and 
\eqref{eq-y109} yields that 
there exists a sufficiently large $r_{1} > 0$ 
such that 
    \[
    \frac{d^{2} U_{\infty}}{d r^{2}}(r) 
    \leq - \frac{1}{2} f(C_{\infty})
    \]
for all $r \geq r_{1}$. 
It follows that 
    \[
    \frac{d U_{\infty}}{d r}
(r) \leq \frac{d U_{\infty}}{d r}
(r_0) - \frac{1}{2} f(C_{\infty}) (r - r_{0}), 
    \]
which contradicts \eqref{eq-y109}. 
Therefore, there exists $R_{\infty} > 0$ such that
$U_{\infty}(r)$ has a zero at $r = R_{\infty}$.
%Then, if we put $U_{\infty}(r) = U_{\infty} 
%(R_{\infty} r)$, we find that $U_{\infty}$ 
%is a singular solution to \eqref{eq-expo} 
%with $\lambda = R_{\infty}^{2}$. 
This completes the proof.
\end{proof}

\section{Properties of singular solutions to 
\eqref{ODE} and generalized Emden-type transformation} 
\label{sec-emden}
\begin{theorem} \label{thm4-1}
Assume that $1 \leq q < 2$. 
Let $\varepsilon>0$. Let $z_{\infty}$ be a singular solution 
to \eqref{ODE} and $y_{\infty}$ be the one obtained by Theorem \ref{thm-sing}. 
We put 
\begin{equation} \label{func-h}
    \mathcal{F}(s) 
    :=g(s) +2\log g(s) 
    -\log \left( \frac{g}{g'}(s) \right) 
    -\log (4 H(s)) 
    -\frac{C}{g^{1-\varepsilon}}(s).
    \end{equation}
for $C > 0$.
There exists $C_1 > 0$ depending 
only on $f$ such that 
one of the following holds.
\begin{itemize}
    \item[{(i)}] there exists a sequence $\rho_n\to\infty$ such that 
    $g(y_{\infty}(\rho_n)) \leq g(z_{\infty}(\rho_n))$,
    \item[{(ii)}] 
    $\mathcal{F}(z_{\infty}(\rho)) - 
    \rho > -C_{1}$ for all 
    $\rho > 0$ sufficiently large,
    \item[{(iii)}] there exists a sequence $\rho_n\to\infty$ such that $\mathcal{F}(z_{\infty}(\rho_n))>\rho_n\ge
    \mathcal{F}(y_{\infty}(\rho_n))$ . 
\end{itemize}
\end{theorem}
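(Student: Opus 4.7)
The plan is to argue by contradiction: assume (i), (ii), and (iii) all fail and derive an impossibility, with the main analytic burden being an oscillatory Lyapunov analysis for a quantity associated to $z_\infty$.

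First I would pin down the asymptotics of $\mathcal{F}(y_\infty(\rho))$ using Corollary \ref{cor}. Writing $y_\infty = y_1 + y_2 + \eta$ with $y_1 = g^{-1}(\rho)$, and Taylor-expanding each of the terms $g(y_\infty)$, $2\log g(y_\infty)$, $\log(g/g')(y_\infty)$, $\log(4H(y_\infty))$ about $y_1$, the correction terms $-2\log\rho + \log((g/g')(y_1)) + \log(4H(y_1))$ appearing in $g(y_\infty)$ are designed to cancel against $2\log g(y_\infty) - \log(g/g')(y_\infty) - \log(4H(y_\infty))$. This should yield
\[
\mathcal{F}(y_\infty(\rho)) = \rho - \frac{C}{\rho^{1-\varepsilon+o(1)}} + o(\rho^{-1+\varepsilon}),
\]
so by choosing $C$ large enough (depending only on $f$), one has $\mathcal{F}(y_\infty(\rho)) \leq \rho$ for all large $\rho$. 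Simultaneously, differentiation gives $\mathcal{F}'(s) = g'(s)(1+o(1)) > 0$ for large $s$, so $\mathcal{F}$ is strictly increasing on its tail, and comparisons of $\mathcal{F}$-values reduce to comparisons of arguments.

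Now suppose the three conditions all fail. Failure of (i) gives $z_\infty(\rho) < y_\infty(\rho)$ for all large $\rho$, and thus $\mathcal{F}(z_\infty(\rho)) < \mathcal{F}(y_\infty(\rho)) \leq \rho$ by monotonicity. Hence the second inequality in (iii) is automatic, and the failure of (iii) would follow from $\mathcal{F}(z_\infty(\rho)) \leq \rho$—which already holds. Finally, failure of (ii) forces $\liminf_{\rho\to\infty}[\mathcal{F}(z_\infty(\rho)) - \rho] = -\infty$. The main step is to rule out this last possibility.

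For this, set $w(\rho) := \mathcal{F}(z_\infty(\rho)) - \rho$ and compute $w''$ using that $z_\infty$ solves \eqref{ODE}. The choice of $\mathcal{F}$ is made precisely so that, after substitution, the equation for $w$ takes the form
\[
w'' + I(\rho)\, w = R(\rho, w, w'),
\]
with the same rotating coefficient $I(\rho) = G(\rho)/\rho$ that appears in \eqref{eq-y7}, and a remainder $R$ dominated by $o(w/\rho)$. Using the Liouville-type change of variables $a, b$ from \eqref{eq-y22}, the linear part has oscillating solutions $a(\rho)\sin b(\rho),\ a(\rho)\cos b(\rho)$, whose slowly decaying amplitudes make it impossible for $w$ to tend monotonically to $-\infty$; any deep dip must be followed by a return above $0$, producing a sequence on which $w > 0$, i.e.\ $\mathcal{F}(z_\infty(\rho)) > \rho$. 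Combined with $\mathcal{F}(y_\infty) \leq \rho$ this gives case (iii), contradicting its assumed failure.

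The hard part will be the derivation and control of the ODE for $w$: one must verify that the correction term $-C/g^{1-\varepsilon}$ in the definition of $\mathcal{F}$ cancels exactly the resonant contributions coming from the Taylor remainder of $g$ at $z_\infty$, so that $R$ is indeed a genuine remainder. This reduces to the same nonlinear estimates used in Section \ref{sec-sing} (Lemmata \ref{lem3-0}--\ref{lem-nest}), now reorganized around the Lyapunov variable $w$ rather than around the remainder $\eta$; the combinatorial bookkeeping needed to extract the $I(\rho)w$ term cleanly is expected to be the most delicate calculation.
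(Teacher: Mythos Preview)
Your reduction is sound up to the point where you have $w(\rho):=\mathcal{F}(z_\infty(\rho))-\rho<0$ for all large $\rho$ and you must rule out $\liminf w=-\infty$. The claimed equation $w''+I(\rho)\,w=R$ with $I(\rho)=G(\rho)/\rho$ and $R=o(w/\rho)$ is not what one actually gets. Computing directly from \eqref{ODE} and the definition of $\mathcal{F}$ yields (cf.\ \eqref{eq-y104} in the paper)
\[
L(z_\infty)\,w'' \;-\; \frac{\mathcal{F}''(z_\infty)L(z_\infty)}{\mathcal{F}'(z_\infty)^2}\,(w'+1)^2 \;+\; H(z_\infty)\,e^{w}\;=\;0,
\]
with $L\sim g(z_\infty)$. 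The nonlinearity is $e^{w}$, not $w$; the coefficient $I(\rho)=H(y_1)/\rho$ from Section~\ref{sec-sing} is attached to $y_1$, not to $z_\infty$, so the Liouville basis $a\sin b,\ a\cos b$ from \eqref{eq-y22} does not govern this equation. More importantly, in the very regime you must exclude, $w\to-\infty$, the term $H(z_\infty)e^{w}$ vanishes and the equation reduces to $w''\approx \mathcal{F}''/(\mathcal{F}')^2\,(w'+1)^2\ge 0$: there is no linear restoring force and no oscillation mechanism that forces $w$ back above $0$. So your intended contradiction via rotation does not go through.

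The paper closes this gap with two ingredients you are missing. First, a \emph{nonlinear} Lyapunov function in a reparametrized time $t$ (with $d\rho=L^{1/2}dt$),
\[
\mathcal{H}=\tfrac12\Bigl(\tfrac{dW}{dt}\Bigr)^2+\tfrac1q\,e^{W}-\tfrac1q(1-\varepsilon_0)W,
\]
is shown to be nondecreasing on intervals where $w'\ge 0$; this compares local minima of $w$ to local maxima, once the maxima are a priori bounded above. Second, and essential for the monotone case, Lemma~\ref{lower} establishes independently that \emph{every} singular solution satisfies $\limsup_{\rho\to\infty}(g(z_\infty)-g(y_\infty))\ge -4$; this is proved by a direct comparison/contradiction argument using Lemma~\ref{basiclem} and the growth estimates of Section~\ref{sec-pre}, and it supplies a sequence on which $w$ is already bounded below (Proposition~\ref{prop-alt}), anchoring the Lyapunov argument (Proposition~\ref{prop-alt2}). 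Without this a priori input there is nothing preventing $w$ from drifting to $-\infty$ monotonically, and your oscillation heuristic cannot substitute for it.
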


\begin{remark}\label{rem4-1}
\rm{
Since $\mathcal{F}$ is non-decreasing function 
(see Lemma \ref{lem4-3} below), 
This together with the 
monotonicity of $g$ (see Lemma 
\ref{lem2-2}) implies
$(iii)$ means $(i)$. 
Thus,  
$(i)$ or $(iii)$ means that
$y_{\infty}(\rho_n)\le z_{\infty}(\rho_n)$ for all $\rho_n\to\infty$. 
Moreover, by using 
Lemma \ref{lem4-3}, 
$\mathcal{F}(y_{\infty}(\rho_n))
\le \mathcal{F}(z_{\infty}(\rho_n))$ is satisfied in this case.}
\end{remark}

\subsection{Properties of 
singular solutions to \eqref{ODE}}

\begin{lemma}
\label{basiclem}
Assume that $g$ satisfies $(G1)$. 
Let $z_{\infty}$ be a singular solution to 
\eqref{ODE}. We have 
 $\lim_{\rho \to \infty} z_{\infty}'(\rho) = 0$, 
$z_{\infty}'\ge 0$ and $g(z_{\infty})\le \rho+\log 4$ and
\begin{equation*}
    \frac{d z_{\infty}}{d \rho}(\rho)=\int_{\rho}^{\infty}\frac{1}{4}e^{g(z_{\infty}(s))-s}\,ds.
\end{equation*}
\end{lemma}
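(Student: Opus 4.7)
The plan is to exploit the concavity of $z_\infty$ that follows immediately from the ODE, then integrate to extract the formula and the limit at infinity. From \eqref{ODE} and the positivity of $f$ we have $z_\infty''(\rho) = -\tfrac{1}{4} e^{g(z_\infty(\rho)) - \rho} < 0$ on the domain of definition, so $z_\infty'$ is strictly decreasing.

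First I would show $z_\infty' \geq 0$ by contradiction. If $z_\infty'(\rho_0) < 0$ for some $\rho_0$, then monotonicity of $z_\infty'$ gives $z_\infty'(\rho) \leq z_\infty'(\rho_0) < 0$ for all $\rho > \rho_0$, so $z_\infty(\rho) \leq z_\infty(\rho_0) + z_\infty'(\rho_0)(\rho - \rho_0) \to -\infty$, contradicting the assumption that $z_\infty$ is a singular solution (i.e.\ $z_\infty(\rho) \to \infty$ as $\rho \to \infty$). Since $z_\infty'$ is monotone decreasing and bounded below by $0$, the limit $L := \lim_{\rho \to \infty} z_\infty'(\rho) \geq 0$ exists. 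Integrating the identity $-z_\infty'' = \tfrac{1}{4} e^{g(z_\infty) - \rho}$ on $[\rho, T]$ and sending $T \to \infty$ yields
\[
z_\infty'(\rho) - L = \int_\rho^\infty \frac{1}{4} e^{g(z_\infty(s)) - s}\,ds.
\]

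The main obstacle is to show $L = 0$. Suppose for contradiction $L > 0$; then $z_\infty(\rho) \geq L\rho/2$ for all sufficiently large $\rho$. By Lemma \ref{lem3-2}, the function $g$ grows super-linearly (indeed $g(s) = s^{1/(1 - 1/q) + o(1)}$ when $q > 1$, and $g(s) > s^M$ for any $M > 0$ when $q = 1$), so $g(z_\infty(s)) - s \to \infty$, and $e^{g(z_\infty(s)) - s}$ fails to be integrable near $+\infty$. This contradicts the finiteness of the left-hand side above. Hence $L = 0$, which simultaneously yields $\lim_{\rho \to \infty} z_\infty'(\rho) = 0$ and the displayed integral formula.

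For the remaining inequality $g(z_\infty) \leq \rho + \log 4$, I would use the monotonicity of $z_\infty$ just established. Since $g(z_\infty(\cdot))$ is non-decreasing, for any $s \geq \rho$ in the domain we have $g(z_\infty(s)) \geq g(z_\infty(\rho))$, so
\[
z_\infty'(\rho) \geq \frac{1}{4} e^{g(z_\infty(\rho))} \int_\rho^\infty e^{-s}\,ds = \frac{1}{4} e^{g(z_\infty(\rho)) - \rho}.
\]
Since $z_\infty'(\rho) \to 0$, we have $z_\infty'(\rho) \leq 1$ for $\rho$ sufficiently large, which rearranges to $g(z_\infty(\rho)) \leq \rho + \log 4$. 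All parts of the lemma are thus reduced to the one nontrivial step ($L=0$), which rests on the super-linear growth of $g$ forced by the convergence of $\int^\infty du/g(u)$ in (G1).
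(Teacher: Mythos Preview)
Your argument is correct and follows essentially the same path as the paper for the first three assertions: concavity of $z_\infty$, nonnegativity of $z_\infty'$, and the contradiction argument for $L=0$ via the super-linear growth of $g$ supplied by Lemma~\ref{lem3-2}. (A minor formal point: Lemma~\ref{lem3-2} is stated under (G1) and (G2), but the growth estimates \eqref{eq-y52}--\eqref{eq-y52-1} you invoke actually rely only on \eqref{eq-y41}, which needs just (G1); the paper's own proof makes the same citation.)

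The one place you diverge is the bound $g(z_\infty)\le\rho+\log 4$. The paper integrates $e^{-g(z_\infty)}z_\infty'$ and combines the resulting inequality with the elementary fact $e^{-g(s)}\ge\int_s^\infty e^{-g(\tau)}\,d\tau$ (valid because $g'>1$ eventually), obtaining the bound for all $\rho$. Your route is shorter: you already have $z_\infty'(\rho)\ge\tfrac14 e^{g(z_\infty(\rho))-\rho}$ from monotonicity, and since $z_\infty'(\rho)\to 0$ you simply cap the left side by $1$ once $\rho$ is large. This yields the inequality only for sufficiently large $\rho$, but that is all the subsequent applications (e.g.\ Lemma~\ref{lower}) require, so nothing is lost.
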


\begin{proof}
We first show that 
    \begin{equation} \label{eq-y55}
    \lim_{\rho \to \infty} z_{\infty}'(\rho) = 0.
    \end{equation}
Observe from \eqref{ODE} 
that $z_{\infty}'(\rho)$ is a decreasing function 
of $\rho$, 
so that the limit of $z_{\infty}'(\rho)$ exists. 
Since $\lim_{\rho \to \infty} z_{\infty}(\rho) = \infty$, 
the limit $\lim_{\rho \to \infty} z_{\infty}'(\rho)$ must be non-negative. 
Suppose to the contrary that  
\eqref{eq-y55} does not hold and  
$z_{\infty}'(\rho) \to c$ as 
$\rho\to\infty$ for some $c > 0$.
Then, we have $z_{\infty}(\rho) 
\ge c_1\rho-C$ for some $c_1<c$. By Lemma \ref{lem3-2}, we have
$g(z_{\infty})>z_{\infty}^{1+\e_0}$ with some $\e_0>0$. 
This together with \eqref{ODE} implies that 
    \[
    - z_{\infty}''(\rho) \geq \frac{1}{4} e^{c_{1}^{1+\e_0} \rho^{1+\e_0} 
    - \rho - C} \to \infty 
    \hspace{4mm} \mbox{as $\rho \to \infty$}. 
    \]
This is absurd because $\lim_{\rho \to \infty} 
z_{\infty}'(\rho) = c$. Thus, \eqref{eq-y55} holds.  

It follows from \eqref{eq-y55} and 
the equation \eqref{ODE} that 
    \[
    z_{\infty}^{\prime}(\rho) = \int_{\rho}^{\infty} 
    \frac{1}{4} f(z_{\infty}(s)) e^{-s}\, ds > 0.
    \]
Namely, we find that $z_{\infty}$ is non-decreasing. 
%Note that we can rewrite the equation \eqref{ODE} by 
%    \[
%    - \frac{d^{2} y}{d \rho^{2}} 
%    = \frac{1}{4} e^{g(y) - \rho} 
%    \hspace{4mm} \mbox{for $-2 \log R < \rho < \infty$}. 
%    \]
By the above equation, 
$f(u) = e^{g(u)}$, 
$\lim_{\rho \to \infty} z_{\infty}^{\prime}
(\rho) = 0$, 
the monotonicity of $g$ and $z_{\infty}$, 
we have
\begin{equation} \label{eq-y68}
    z_{\infty}'(\rho)= \int_{\rho}^{\infty}\frac{1}{4}e^{g(z_{\infty}(s))-s}\,ds
    \ge \frac{e^{g(z_{\infty}(\rho))}}{4} 
    \int_{\rho}^{\infty} e^{-s} \,ds
    =  \frac{1}{4}e^{g(z_{\infty}(\rho))-\rho}.
\end{equation}

Finally, we show $g(z_{\infty})\le \rho+\log 4$.  
It follows from \eqref{eq-y68} that 
\begin{equation} \label{eq-y57}
\int_{z_{\infty}(\rho)}^{\infty} e^{-g(\tau)}\,d\tau = 
\int_{z_{\infty}(\rho)}^{z_{\infty}(\infty)}e^{-g(\tau)}\,d\tau
= \int_{\rho}^{\infty} 
e^{- g(z_{\infty}(s))} z_{\infty}'(s)\, ds 
\ge \frac{1}{4} \int_{\rho}^{\infty} e^{-s} \,ds
= \frac{e^{-\rho}}{4}.
\end{equation}
In addition, we put 
    \[
    \varphi(s) := e^{- g(s)} - 
    \int_{s}^{\infty} 
    e^{- g(\tau)}\, d \tau.  
    \]
Then, one has 
    \[
    \varphi'(s) = - e^{- g(s)}g'(s) 
    - e^{- g(s)} < 0. 
    \]    
This together with $\lim_{s \to \infty} \varphi(s) 
= 0$ yields that $\varphi(s) \geq 0$, so that 
    \begin{equation}\label{eq-y56}
    e^{- g(z_{\infty})} \geq 
    \int_{z_{\infty}(\rho)}^{\infty} e^{-g(\tau)}\,d\tau.
    \end{equation}
Thus, by \eqref{eq-y57} and \eqref{eq-y56}, 
we get 
    \[
    e^{-g(z_{\infty})} \geq \frac{e^{- \rho}}{4}, 
    \]
which implies that 
$g(z_{\infty})\le \rho+\log 4$. 
This completes the proof. 
\end{proof}
\begin{lemma}\label{lower}
Assume that $1 \leq q$. 
Let $z_{\infty}$ be a singular solution 
to \eqref{eq-expo}.
Then, one has  
\begin{equation*}
   \limsup_{\rho\to\infty} g(z_{\infty}(\rho)) - g(y_{\infty}(\rho))) \ge -4.
\end{equation*}
\end{lemma}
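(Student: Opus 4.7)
The plan is a contradiction argument. Suppose the conclusion fails; then there exist $\delta > 0$ and $\rho_{0}$ such that $g(z_{\infty}(\rho)) - g(y_{\infty}(\rho)) < -4 - \delta$ for every $\rho \ge \rho_{0}$. Substituting this pointwise bound into the integral representation from Lemma \ref{basiclem} immediately gives
\[
z_{\infty}'(\rho) = \int_{\rho}^{\infty} \frac{1}{4} e^{g(z_{\infty}(s)) - s}\,ds \le e^{-4 - \delta}\, y_{\infty}'(\rho) \qquad \text{for all } \rho \ge \rho_{0}.
\]

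Next I would upgrade this derivative-level comparison to a bound on $z_{\infty}$ itself. Integrating from $\rho_{0}$ to $\rho$ yields $z_{\infty}(\rho) \le z_{\infty}(\rho_{0}) + e^{-4 - \delta}[y_{\infty}(\rho) - y_{\infty}(\rho_{0})]$. Since $y_{\infty}(\rho) \to \infty$ by Theorem \ref{thm-sing}, the additive constant is absorbed and, for any fixed $t \in (e^{-4 - \delta}, 1)$ and all $\rho$ sufficiently large, one has $z_{\infty}(\rho) \le t\, y_{\infty}(\rho)$. Applying the monotone function $g$, together with the sub-linearity estimate $g(t y_{\infty}) \le t\, g(y_{\infty})$ from Lemma \ref{lem2-3}, and the asymptotic $g(y_{\infty}(\rho)) \le \rho$ for $\rho$ large (which follows from Corollary \ref{cor}, where the dominant correction term $-2\log\rho$ is negative), I conclude $g(z_{\infty}(\rho)) \le t \rho$ for all sufficiently large $\rho$.

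The final step is to feed this improved upper bound back into the integral formula for $z_{\infty}'$. This yields
\[
z_{\infty}'(\rho) \le \int_{\rho}^{\infty} \frac{1}{4} e^{-(1 - t) s}\,ds = \frac{e^{-(1 - t)\rho}}{4(1 - t)},
\]
and since $1 - t > 0$, the right-hand side is integrable at infinity. One further integration then bounds $z_{\infty}(\rho)$ uniformly as $\rho \to \infty$, contradicting $z_{\infty}(\rho) \to \infty$, which is the singularity condition after the Emden-Fowler change of variables $\rho = -2\log r$.

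The main obstacle I anticipate is the passage from the additive comparison $z_{\infty} \le e^{-4 - \delta} y_{\infty} + C$ to the \emph{multiplicative} bound $z_{\infty} \le t\, y_{\infty}$ needed before invoking Lemma \ref{lem2-3}; this is handled by choosing $t$ strictly larger than $e^{-4 - \delta}$ so that the additive constant is absorbed once $y_{\infty}(\rho)$ is large. Apart from this, the argument is transparent. In fact the exponent $-4$ plays only a cosmetic role in the proof: any fixed negative bound would close the loop, reflecting that a uniform deficit of $g(z_{\infty})$ below $g(y_{\infty})$ forces $z_{\infty}$ to grow too slowly to be a singular solution.
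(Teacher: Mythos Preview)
Your argument is correct and, in fact, cleaner than the paper's. The key observation you exploit is that $y_{\infty}$ is itself a singular solution, so Lemma~\ref{basiclem} applies to it and gives $y_{\infty}'(\rho)=\int_{\rho}^{\infty}\tfrac14 e^{g(y_{\infty}(s))-s}\,ds$; the assumed gap $g(z_{\infty})\le g(y_{\infty})-4-\delta$ then yields $z_{\infty}'\le e^{-4-\delta}y_{\infty}'$ in one line. The paper does not use this comparison with $y_{\infty}'$. Instead it substitutes the explicit asymptotic \eqref{eq-y24} for $g(y_{\infty})$ into the integrand and then compares $z_{\infty}$ with the reference function $y_{1}=g^{-1}(\rho)$; this forces a case split according to whether $q=1$ or $q>1$, each case requiring a separate integration-by-parts or monotonicity argument (using \eqref{eq-y29} in the second case) to reach the intermediate bound $z_{\infty}\le e^{-1}y_{1}$. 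From that point both proofs coincide: apply Lemma~\ref{lem2-3} to get $g(z_{\infty})\le t\rho$ with $t<1$, feed this back into the integral for $z_{\infty}'$, and conclude that $z_{\infty}$ stays bounded. Your route is shorter, avoids the case analysis entirely, and makes it visible that the constant $-4$ is inessential; the paper's route, on the other hand, does not rely on the integral representation for $y_{\infty}'$ and works directly from the asymptotic formula, which might be preferable if one later wanted sharper quantitative information.
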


\begin{proof}
Assume by contradiction that 
\begin{equation*}
    g(z_{\infty}(\rho))- 
    g(y_{\infty}(\rho)) \le 
    -\frac{7}{2} \hspace{2mm}\text{for all $\rho>\rho_0$}. 
\end{equation*}
It follows from \eqref{eq-y24} and (G1) that
\begin{align}   \label{eq-y25-2}
\begin{split}
    \frac{1}{4}e^{g(z_{\infty}(\rho))-\rho}
    \le \frac{1}{4}e^{g(y_{\infty}(\rho))-\rho 
    - 7/2}
   &= \frac{1}{4} \exp\left[-2\log\rho +
    \log \left[\frac{g}{g'}(g^{-1}
    (\rho))\right]+\log 4 
    - \log q
    +o(1)-7/2\right]\\
    & \leq \frac{1}{qe^3\rho^2}\frac{g}{g'}(g^{-1}(\rho)).
  \end{split}
\end{align}

We derive a contradiction by dividing 
into (Case 1)\; $q = 1$ and 
(Case 2)\; $1<q$. 

We fist consider (Case 1)\; $q = 1$. 
Here, by Lemma \ref{basiclem}, we have 
$\frac{d z_{\infty}}{d \rho}(\rho) \ge 0$ and 
$\lim_{\rho \to \infty} 
\frac{d z_{\infty}}{d \rho}(\rho) = 0$. 
Thus, thanks to \eqref{eq-y25-2}, we have 
\begin{equation} \label{eq-y26}
    \frac{d z_{\infty}}{d \rho}(\rho) 
    = \int_{\rho}^{\infty}\frac{1}{4}e^{g(z_{\infty}(s))-s}\,ds
    \le e^{- 3}
    \int_{\rho}^{\infty}\frac{g}{g'}(g^{-1}(s))\frac{1}{s^2}\,ds.
%    \le 
%    e^{- 3}\frac{g}{g'}(g^{-1}
%(\rho))\frac{1}{\rho}=e^{-3}
%    \frac{1}{g'(g^{-1}(\rho))}
\end{equation}
Here, we recall that $g(y_1(\rho)) 
= \rho$ (see \eqref{eq-y4}). Then, we see that $g'(y_1) 
\frac{d y_1}{d \rho}=1$. Thus, by integrating by parts, 
we obtain that
\begin{align*}
\int_{\rho}^{\infty}\frac{g}{g'}(g^{-1}(s))\frac{1}{s^2}\,ds&=[-(\frac{g}{g'})(y_1(s))s^{-1}]_{\rho}^{\infty}+\int_{\rho}^{\infty}(\frac{g}{g'})'(y_1)\frac{1}{g'(y_1)s}\,ds\\
&=\frac{g}{g'}(g^{-1}(\rho))\frac{1}{\rho}+\int_{\rho}^{\infty}(1-H(y_1))\frac{g}{g'}(y_1)s^{-2}\,ds. 
\end{align*}
Since it follows from 
(G1) with $q = 1$ that 
$H(y_1)-1=o(1)$, combining the above with \eqref{eq-y26}, we obtain that
\begin{equation*}\label{eq-y26-1}
 \frac{d z_{\infty}}{d \rho}(\rho)\le (e^{-3}+o(1))\frac{g}{g'}(g^{-1}(\rho))\frac{1}{\rho}\le e^{-2}
    \frac{1}{g'(g^{-1}(\rho))}
    = e^{-2}
    \frac{1}{g'(y_{1}(\rho))}. 
\end{equation*}
Therefore, it follows from \eqref{eq-y26} 
and $g'(y_1) 
\frac{d y_1}{d \rho}=1$ and 
$g(y_{1}(\rho)) = \rho$ that
\begin{equation*}
\frac{d}{d \rho}
(e^{z_{\infty}(\rho)-e^{-2}y_1(\rho)}) 
=e^{z_{\infty}(\rho)-e^{-2}y_1(\rho)}
(\frac{d z_{\infty}}{d \rho}(\rho) 
-e^{-2} \frac{d y_1}{d \rho} 
(\rho)) = e^{z_{\infty}(\rho)-e^{-2}y_1(\rho)}
\left(\frac{d z_{\infty}}{d \rho} 
(\rho) - e^{-2}\frac{1}{g'(y_1(\rho))} \right) \le 0.
\end{equation*}
Therefore, since $\lim_{\rho \to \infty} y_{1}
(\rho) = \infty$, 
we have 
$z_{\infty} (\rho) \le e^{-2}y_1(\rho) + 
C\le e^{-1}y_1(\rho)$ for 
sufficiently large $\rho > 0$. 
\normalsize
Thus, we have by \eqref{eq-y26}, 
\eqref{tec2} and $g(y_{1}(\rho)) = \rho$
that 
\begin{equation*}
    \frac{d z_{\infty}}{d \rho} (\rho) = \int_{\rho}^{\infty}\frac{1}{4}e^{g(z_{\infty}(s))-s}\,ds 
    \le \frac{1}{4}\int_{\rho}^{\infty}e^{g(e^{-1}y_1(s))-s}\,ds
    \leq \frac{1}{4}\int_{\rho}^{\infty}e^{e^{-1} g(y_1(s))-s}\,ds
    \le Ce^{-(1-e^{-1})\rho}. 
\end{equation*}
Thus, we have
\begin{equation*}
\sup_{\rho > \rho_0}z_{\infty}(\rho) 
\leq 
\sup_{\rho > \rho_0} \left\{
z_{\infty}(\rho_0)+\int_{\rho_0}^{\rho}Ce^{-(1-e^{-1})s} \right\} \le C
\end{equation*}
It is a contradiction because $z_{\infty}$ is 
a singular solution. 

Next, we consider (Case 2)\; $1<q$. Let $\e_0>0$ be sufficiently small.
Then, by Lemma \ref{basiclem}, we have $\frac{d z_{\infty}}{d\rho}=o(1)$. Hence, thanks to \eqref{eq-y25-2}, 
\eqref{eq-y13} and \eqref{eq-y29}, we have 
\begin{equation} \label{eq-y108}
\begin{split}
    \frac{d z_{\infty}}{d \rho}(\rho)
    = \int_{\rho}^{\infty} 
    \frac{1}{4} e^{g(z_{\infty}(s)) 
    - s} \, ds
    & \le \int_{\rho}^{\infty}
    \frac{g(y_1(s))}{g'(y_1(s))y_1(s)}
    \frac{y_1(s)}{qe^3 s^2}\,ds \\
    &
    = (1-\frac{1}{q}+o(1)) 
    \int_{\rho}^{\infty} 
    \frac{y_1(s)}{qe^3 s^2}\,ds
    \\
    & = (1-\frac{1}{q}+o(1)) 
    \int_{\rho}^{\infty} 
    \frac{y_1(s)}{g^{1 - 
    \frac{1}{q} + \e_0}(y_{1}(s))} 
    \frac{1}{qe^3
    g^{- 1 + 
    \frac{1}{q} - \e_0}(y_{1}(s))
    s^2}\,ds \\
    & \le (1-\frac{1}{q}+o(1))\frac{y_1(\rho)}{qe^3 g^{1-\frac{1}{q}+\varepsilon_0}(y_1(\rho))}
    \int_{\rho}^{\infty}\frac{1}{s^{1+\frac{1}{q}-\varepsilon_0}}\,ds\\
    &=\frac{(1-\frac{1}{q}+o(1))y_1(\rho)}{qe^3(\frac{1}{q}-\varepsilon_0) g^{1-\frac{1}{q}+\varepsilon_0}(y_1(\rho))\rho^{\frac{1}{q}-\varepsilon_0}}\\
    &=\frac{(1-\frac{1}{q}+o(1))y_1(\rho)}
    {e^3q(\frac{1}{q}-\varepsilon_0)g (y_{1}(\rho))} 
    \le  
    \frac{e^{-2}}{g'(y_1)}.
\end{split}
\end{equation}
Here, we have used \eqref{eq-y13} in 
the last equality. Thus, it follows 
from $g'(y_1) 
\frac{d y_1}{d \rho}=1$ and 
\eqref{eq-y108} that
\begin{equation*}
\frac{d}{d \rho} 
(e^{z_{\infty}-e^{-2}y_1}) = 
e^{z_{\infty}-e^{-2}y_1}
( \frac{d z_{\infty}}{d \rho} - 
e^{-2} \frac{d y_1}{d \rho}) 
\le e^{z_{\infty}-e^{-2}y_1} 
(\frac{d z_{\infty}}{d \rho} 
-e^{-2}\frac{1}{g'(y_1)})\le 0.
\end{equation*}
Therefore, we have $z\le e^{-2}y_1+C\le e^{-1}y_1$. 
Here, by using \eqref{tec2}. we have
\begin{equation*}
    z'(\rho)\le \frac{1}{4}\int_{\rho}^{\infty}e^{g(e^{-1}y_1(s))-s}\,ds\le Ce^{-(1-e^{-1})\rho}.
\end{equation*}
Thus, we have
\begin{equation*}
\sup_{\rho > \rho_0}z_{\infty}(\rho) \leq  
\sup_{\rho > \rho_0} \left\{
z_{\infty}(\rho_0)+\int_{\rho_0}^{\rho}Ce^{-(1-e^{-1})s} \right\} \le C.
\end{equation*}
It is a contradiction because $z_{\infty}$ is 
a singular solution. 
Thus, we obtain the desired result. 
\end{proof}
\subsection{Generalized Emden-type transformation}
\label{energysec}

\begin{proposition} \label{prop-alt}

Let
$z_{\infty}$ be a singular solution to
equation \eqref{ODE} and $y_{\infty}$ 
be the one obtained
by Theorem \ref{thm-sing}.   
Assume that $ \limsup_{\rho\to\infty} g(z_{\infty}(\rho)) - g(y_{\infty}(\rho))) 
< 1$. 
Then there exists a sequence
     $\{\rho_{n}\}$ with $\lim_{n \to 
\infty} \rho_n =\infty$ such that $\mathcal{F}(z_{\infty}(\rho_n))-\rho_n\ge -C$ for some $C>0$. 
\end{proposition}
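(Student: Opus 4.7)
The plan is to argue by contradiction, realizing the Lyapunov/Emden idea flagged in Remark 1.3. Assume toward contradiction that $\mathcal{F}(z_{\infty}(\rho)) - \rho \to -\infty$ as $\rho \to \infty$, and set $w(\rho) := \mathcal{F}(z_{\infty}(\rho)) - \rho$. It will be enough to derive a contradiction by showing that in fact $w$ is bounded below for $\rho$ large, which yields the claim a fortiori.

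The first step is to derive a self-contained ODE for $w$. The cornerstone is the algebraic identity
\begin{equation*}
e^{g(s) - \mathcal{F}(s)} \;=\; \frac{4 g''(s)}{(g'(s))^{3}}\,\exp\!\bigl(C g^{-(1-\varepsilon)}(s)\bigr),
\end{equation*}
which follows at once from \eqref{func-h} together with $H = g g''/(g')^{2}$. Combined with the expansions $\mathcal{F}'(s) = g'(s)\bigl(1 + O(1/g(s))\bigr)$ and $\mathcal{F}''(s) = g''(s)\bigl(1 + O(1/g(s))\bigr)$, which are straightforward from \eqref{func-h} and (G2), and with \eqref{ODE}, one arrives at
\begin{equation*}
w''(\rho) \;=\; A(\rho)\,\bigl[(w'(\rho)+1)^{2} - e^{w(\rho)}\bigr] \;+\; R(\rho), \qquad A(\rho) := \frac{H(z_{\infty}(\rho))}{g(z_{\infty}(\rho))},
\end{equation*}
where $R$ collects lower-order corrections. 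Because $g(z_{\infty}) \le \rho + \log 4$ by Lemma \ref{basiclem} and $H \to 1/q$, one has $A(\rho) \to 0$ and $\sqrt{A(\rho)} \gtrsim \rho^{-1/2}$.

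Next I would perform the Liouville-type time change $d\xi/d\rho := \sqrt{A(\rho)}$; the lower bound just noted forces $\xi \to \infty$ as $\rho \to \infty$. In the $\xi$-variable the equation becomes an asymptotically autonomous perturbation of $W_{\xi\xi} = 1 - e^{W}$, whose standard Lyapunov function is
\begin{equation*}
E(\xi) \;:=\; \tfrac{1}{2} w_{\xi}^{2} + e^{w} - w.
\end{equation*}
The hypothesis $\limsup_{\rho \to \infty}\bigl(g(z_{\infty}) - g(y_{\infty})\bigr) < 1$ now enters through the pointwise bound $e^{g(z_{\infty}) - \rho} \le e \cdot e^{g(y_{\infty}) - \rho}$, which by Corollary \ref{cor} simplifies to $e^{g(z_{\infty}) - \rho} \le C \rho^{-2}(g/g')(g^{-1}(\rho))$ and controls every occurrence of $e^{w}$ appearing in $R$. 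The $-C g^{-(1-\varepsilon)}$ correction built into $\mathcal{F}$ is placed precisely so that the leading non-integrable contribution to $dE/d\xi$ cancels, leaving an integrable residual on $[\xi_{0},\infty)$. Once $E$ is bounded above by some constant $M$, one obtains $-w \le M - \tfrac{1}{2} w_{\xi}^{2} - e^{w} \le M$, i.e.\ $w(\rho) \ge -M$ for all $\rho$ large, contradicting the assumption $w \to -\infty$.

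The main obstacle lies in the third step: controlling $dE/d\xi$ in $L^{1}$. The perturbation involves $w_{\xi}$, $w_{\xi}^{2}$ and the non-autonomous drift $A'/A^{3/2}$, and the cancellation arranged by the $-C g^{-(1-\varepsilon)}$ correction must be verified using (G2), Lemma \ref{lem2.7} and the comparison $g/g' \sim g^{1-1/q + o(1)}$ from Lemma \ref{lem3-2}. The bookkeeping parallels, but is lighter than, the contraction estimates carried out in Section \ref{sec-sing}.
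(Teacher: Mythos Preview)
Your approach is quite different from the paper's and has a genuine gap. The paper's proof of Proposition~\ref{prop-alt} is short and purely algebraic: by Lemma~\ref{lower} one already knows $\limsup_{\rho\to\infty}\bigl(g(z_\infty)-g(y_\infty)\bigr)\ge -4$, so combined with the hypothesis one extracts a sequence $\{\rho_n\}$ along which $|g(z_\infty(\rho_n))-g(y_\infty(\rho_n))|\le 5$. Lemma~\ref{tec} then gives $|z_\infty(\rho_n)-y_\infty(\rho_n)|\le C/g'(y_1)$, and a term-by-term comparison of $\mathcal{F}(z_\infty(\rho_n))$ with $\mathcal{F}(y_\infty(\rho_n))$, together with $\mathcal{F}(y_\infty(\rho))-\rho=o(1)$ from Lemma~\ref{lem4-5}, yields the bound. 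No ODE for $w$ is needed; you never invoke Lemma~\ref{lower}, which is the crux.

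Your Lyapunov scheme is essentially an attempt to prove Proposition~\ref{prop-alt2} directly, bypassing Proposition~\ref{prop-alt}. But the paper's energy argument for Proposition~\ref{prop-alt2} uses a \emph{sign} condition on $d\mathcal{H}/d\rho$ (monotonicity when $w'\ge 0$), not integrability, and it explicitly feeds Proposition~\ref{prop-alt} into Step~3. Your claim that the residual $dE/d\xi$ is integrable is the whole difficulty and is not carried out. In fact it appears problematic: with $d\xi=\sqrt{A}\,d\rho$ and $A\sim\rho^{-1}$ one has $W_\xi=w'/\sqrt{A}$, while the hypothesis only yields $|w'+1|=g'(z_\infty)z_\infty'(1+o(1))=O(\rho^{o(1)})$, so $|W_\xi|$ can grow like $\rho^{1/2+o(1)}\sim\xi^{1+o(1)}$. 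The term $(w'+1)^2$ in your equation then contributes $AW_\xi^{3}\sim\xi^{1+o(1)}$ to $dE/d\xi$, which is not integrable. Your assertion that the $-C/g^{1-\varepsilon}$ correction in $\mathcal{F}$ is ``placed precisely'' to cancel this is not what the paper does: that term is inserted so that $\mathcal{F}(y_\infty(\rho))\le\rho$ (Lemma~\ref{lem4-5}) and so that $L'\ge 0$ in \eqref{eq-y111}; it does not kill the quadratic drift $(w'+1)^2$.
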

In order to prove Proposition \ref{prop-alt}, 
we prepare the following lemma. 
\begin{lemma}
\label{lem4-5}
Let $\mathcal{F}$ be the function defined by \eqref{func-h}. 
Then, by taking $C > 0$ sufficiently large in \eqref{func-h}, 
we have 
    \begin{align}
    &\mathcal{F}(y_{\infty}(\rho))-\rho \leq 0 \hspace{4mm} 
    \mbox{for all $\rho$ sufficiently large}, 
     \label{eq-y34}\\ 
    & \mathcal{F}(y_{\infty}(\rho)) 
    = \rho+O(\rho^{-1+\varepsilon}) \hspace{4mm} 
    \mbox{as $\rho \to \infty$}.
    \label{eq-y69} 
    \end{align}
\end{lemma}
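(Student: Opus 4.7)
The plan is to substitute the asymptotic from Corollary \ref{cor} into the definition \eqref{func-h} of $\mathcal{F}$ and to verify that every term built into $\mathcal{F}$ to offset $g(y_\infty)$ cancels up to an error of order $o(\rho^{-1+\varepsilon})$, leaving the designed negative piece $-C/g^{1-\varepsilon}(y_\infty)$ as the controlling correction.

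First I would invoke Corollary \ref{cor} with the given $\varepsilon$ to write, with $y_1:=g^{-1}(\rho)$,
\[
g(y_\infty(\rho))=\rho-2\log\rho+\log\!\Big(\tfrac{g}{g'}(y_1)\Big)+\log\!\big(4H(y_1)\big)+o(\rho^{-1+\varepsilon}).
\]
Subtracting $\rho$ and rearranging the terms of $\mathcal{F}(y_\infty)$ around the arguments $y_1$ versus $y_\infty$ gives
\[
\mathcal{F}(y_\infty)-\rho
= \big[2\log g(y_\infty)-2\log\rho\big]
-\big[\log\tfrac{g}{g'}(y_\infty)-\log\tfrac{g}{g'}(y_1)\big]
-\big[\log H(y_\infty)-\log H(y_1)\big]
-\tfrac{C}{g^{1-\varepsilon}(y_\infty)}+o(\rho^{-1+\varepsilon}).
\]

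Next I would show each of the three bracketed differences is $o(\rho^{-1+\varepsilon})$. For the first, Lemma \ref{lem3-2} gives $\log(g/g')(y_1)=O(\log\rho)$, so $g(y_\infty)/\rho=1+O(\log\rho/\rho)$, and a single expansion of $\log$ yields $2\log g(y_\infty)-2\log\rho=O(\log\rho/\rho)=o(\rho^{-1+\varepsilon})$. For the second and third differences I would apply the mean value theorem using the control $|y_\infty-y_1|\le C\rho^{-1/q+o(1)}\log\rho$ obtained from $y_\infty-y_1=y_2+o(g'(y_1)^{-1}\rho^{-1+\varepsilon})$ and \eqref{eq-y51-2}. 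The relevant derivatives are
\[
\Big|\big(\log\tfrac{g}{g'}\big)'(s)\Big|=\tfrac{g'(s)}{g(s)}|1-H(s)|,\qquad
|(\log H)'(s)|=\tfrac{|H'(s)|}{H(s)}\le C\,\tfrac{g'(s)}{g(s)},
\]
where the second bound uses (G2) together with $H(s)\to 1/q>0$. Combining with $(g'/g)(s)\le C\rho^{-1+1/q+o(1)}$ from \eqref{eq-y52-2} and the boundedness of $|1-H|$ gives, for any intermediate point between $y_1$ and $y_\infty$, a product bound of $C\rho^{-1+o(1)}\log\rho=o(\rho^{-1+\varepsilon})$.

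Finally, since $g(y_\infty)/\rho=1+o(1)$, one has $-C/g^{1-\varepsilon}(y_\infty)=-C\rho^{-1+\varepsilon}(1+o(1))$. Assembling all pieces gives
\[
\mathcal{F}(y_\infty(\rho))-\rho=-C\rho^{-1+\varepsilon}\big(1+o(1)\big)+o(\rho^{-1+\varepsilon}),
\]
which proves \eqref{eq-y69} directly, and yields \eqref{eq-y34} as soon as $C>0$ is fixed and $\rho$ is taken large enough, since the explicit negative coefficient dominates the $o(\rho^{-1+\varepsilon})$ remainder. The main (though routine) obstacle is verifying that the mean-value-theorem errors in the $\log(g/g')$ and $\log H$ terms are genuinely $o(\rho^{-1+\varepsilon})$: this requires combining the decay of $y_2$ from \eqref{eq-y51-2} with the derivative estimates of Lemmata \ref{lem2-1}, \ref{lem3-2} and \ref{lem2.7} and assumption (G2), and a uniformity check for the intermediate point between $y_1$ and $y_\infty$.
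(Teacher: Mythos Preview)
Your proof is correct and follows essentially the same approach as the paper: substitute the asymptotic \eqref{eq-y24} from Corollary~\ref{cor} into $\mathcal{F}(y_\infty)-\rho$, then use the mean value theorem together with the estimates $|y_\infty-y_1|\le C\rho^{-1/q+o(1)}\log\rho$, $(g'/g)(s)\le C\rho^{-1+1/q+o(1)}$, and the bound $|H'|\le C g'/g$ from (G2) to show that the three bracketed differences are $O(\log\rho/\rho)=o(\rho^{-1+\varepsilon})$, leaving the designed term $-C/g^{1-\varepsilon}(y_\infty)\sim -C\rho^{-1+\varepsilon}$ to force the sign. The paper carries out exactly this computation in \eqref{eq-y35}--\eqref{eq-y34-1}, with the only cosmetic difference that it first estimates $(g/g')(y_\infty)-(g/g')(y_1)$ and $H(y_\infty)-H(y_1)$ and then takes logarithms, whereas you apply the mean value theorem directly to $\log(g/g')$ and $\log H$; the resulting bounds and the uniformity check at the intermediate point are the same.
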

\begin{proof}
Observe from $g(y_{\infty}) 
= \rho \times (1 + \rho^{-1} 
(g(y_{\infty}) - \rho))$ and \eqref{eq-y24} that 
\begin{align} \label{eq-y35}
\log g(y_{\infty}) 
=\log \rho+\log (1+\rho^{-1}(g(y_{\infty}) 
-\rho))
= \log \rho + O(\rho^{-1}(g(y_{\infty}) - 
\rho))
=\log\rho+O(\frac{\log\rho}{\rho}).
\end{align}
Moreover, we see from our assumption (G1)  that 
\begin{equation} \label{eq-y35-1}
    (\frac{g}{g'}(s))' = 
    \frac{g'(s)^2 - 
    g(s)g''(s)}{g'(s)^2} 
    \to 1 - \frac{1}{q} 
    \hspace{4mm} \mbox{as $s \to \infty$}.   
\end{equation}
This together with the 
mean value theorem and $y_{\infty} 
= y_{1} + y_{2} 
+ o(\rho^{-1-\frac{1}{q}+ \e})$ yields that  
\begin{equation} \label{eq-y30-1}
    \frac{g}{g'}(y_{\infty}) = 
    \frac{g}{g'}(y_1) + 
    (\frac{g}{g'})'(\hat{y})(y_{\infty} 
    - y_1), 
\end{equation}
where $\hat{y} = \theta y_{\infty} 
+ (1 - \theta) y_{1} 
= y_{1} + \theta y_{2} + 
o(\rho^{- 1-\frac{1}{q}+\e})$ with some $\theta\in (0,1)$. 
Then, it follows from \eqref{eq-y35-1} and 
\eqref{eq-y30-1} that 
    \begin{equation} \label{eq-y30}
    \frac{g}{g'}(y_{\infty}) 
    = \frac{g}{g'}(y_1)+O(y_2).  
    \end{equation}
Moreover, it follows from $g(y_{1}) = \rho$, 
\eqref{eq-y77} and Lemma \ref{lem3-2} that
\begin{equation} \label{eq-y31}
\begin{split}
   \frac{g'(y_1)}{g(y_1)} y_2
    &= \frac{1}{g(y_1)} 
    (-2\log\rho - \log\frac{g'(y_{1})}{g(y_{1})} 
    +\log G(\rho) + \log 4)\\
    & = \frac{1}{\rho} 
    (-2\log\rho - \log\frac{g'(y_{1})}{g(y_{1})} 
    +\log G(\rho) + \log 4) \\
    & = O\left(\frac{\log \rho}{
    \rho} \right). 
\end{split}
\end{equation}
Therefore, by \eqref{eq-y30} and \eqref{eq-y31}, 
we have
\begin{align} \label{eq-y33}
\log \frac{g}{g'}(y_{\infty}) 
= \log \frac{g}{g'}(y_1) 
+ \log \left(1 + \frac{g'(y_1)}{g(y_1)} y_2 
\right)
= \log \frac{g}{g'}(y_1) + O(\frac{\log\rho}{\rho}). 
\end{align}
Moreover, we obtain by using a similar argument and the condition (G2) that
\begin{align*}
|H(y_{\infty})-H(y_1)|&\le |H'(\hat{y})||(y_{\infty}-y_1)|\le C 
[(\frac{g}{g'})(\hat{y})]^{-1}|y_2|\le C[(\frac{g}{g'})(y_1)+O(y_2)]^{-1}|y_2|\\
&\le C\frac{g'}{g}(y_1)[1+O(\rho^{-1}\log\rho)]^{-1}|y_2|\\
&\le O\left(\frac{\log \rho}{\rho} \right). 
\end{align*}
Hence it follows that
\begin{equation}
\label{eq-y33-1}
|\log H(y_{\infty})-\log H(y_{1})|=|\log (1+\frac{H(y_{\infty})-H(y_1)}{H(y_1)})|\le O\left(\frac{\log \rho}{\rho}\right). 
\end{equation}
Thus, it follows 
from \eqref{func-h}, 
\eqref{eq-y24}, \eqref{eq-y33}, \eqref{eq-y33-1} and 
\eqref{eq-y35} 
that for a large $C >0$, we obtain
\begin{equation} \label{eq-y34-1}
\begin{split}
    \mathcal{F}(y_{\infty}(\rho))-\rho
    & = 
    g (y_{\infty}(\rho)) + 
    2\log g(y_{\infty}(\rho))  
    -\log \frac{g}{g'}(y_{\infty}(\rho)) 
    - \log 4 H(y_{\infty}(\rho))
    -\frac{C}{g^{1- \varepsilon}(y_{\infty}(\rho))} 
    - \rho\\
    &=-2\log\rho+\log[\frac{g}{g'}(y_1)]+\log 4 H(y_1) 
    +2\log g(y_{\infty}(\rho)) \\
    &\hspace{6mm}-\log \frac{g}{g'}(y_{\infty}(\rho)) -\log 4H(y_{\infty})-
    \frac{C}{g^{1-\varepsilon}(y_{\infty}(\rho))}+o(\rho^{-1+\varepsilon})\\
    & = O(\frac{\log \rho}{\rho}) 
    - \frac{C}{g^{1-\varepsilon}(y_{\infty}(\rho))}+o(\rho^{-1+\varepsilon}) \\
    &\le  -\frac{C}{g^{1-\varepsilon}(y_{\infty}(\rho))} 
    + o(\rho^{-1+\varepsilon})\\
    &\le 0.
\end{split}
\end{equation}
This implies \eqref{eq-y34}. 
Moreover, we see 
from \eqref{eq-y34-1} and \eqref{eq-y24} 
that \eqref{eq-y69} holds.
\end{proof}
\begin{lemma}
\label{tec}
Let $z_{\infty}$ be a singular solution of \eqref{eq-me-o}. Then,
There exists $\rho_0>0$ such that if $g(z_{\infty}(\rho))-g(y_{\infty}(\rho))\le 5$ for some $\rho>\rho_0$, then we have 
$z_{\infty}(\rho)-y_{\infty}(\rho)\le C\frac{1}{g'(y_1)}$ with some $C>0$ independent of $\rho$. In addition to the above assumption, if $g(y_{\infty}(\rho))-g(z_{\infty}(\rho))\le 5$, then we have 
$|y_{\infty}(\rho)-z_{\infty}(\rho)|\le C\frac{1}{g'(y_1)}$.
\end{lemma}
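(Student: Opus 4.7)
The strategy is to apply the mean value theorem to convert the given bound on $g$-values into a bound on the difference of function values, and then handle the factor $g'$ at the intermediate point via a uniform comparison with $g'(y_1)$. Since $g$ is convex for large arguments by Lemma \ref{lem2-2}, $g'$ is monotone increasing there; this together with a change of variables based on $H = g g''/(g')^2$ will give the needed comparison.

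First I would reduce the proof of the first assertion to the case $z_{\infty}(\rho) > y_{\infty}(\rho)$ (the opposite case making $z_{\infty} - y_{\infty} \le 0 \le C/g'(y_1)$ trivial). In that case, the mean value theorem produces $\xi \in (y_{\infty}, z_{\infty})$ with $g(z_{\infty}) - g(y_{\infty}) = g'(\xi)(z_{\infty} - y_{\infty})$; by convexity $g'(\xi) \ge g'(y_{\infty})$, so the hypothesis yields $z_{\infty}(\rho) - y_{\infty}(\rho) \le 5/g'(y_{\infty}(\rho))$. The task is then to prove $g'(y_{\infty}(\rho)) \ge \tfrac{1}{2} g'(y_1(\rho))$ uniformly for large $\rho$. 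Using $g''/g' = H \cdot g'/g$ and the substitution $t = g(s)$, I would write
\[
\log g'(y_{\infty}) - \log g'(y_1) = \int_{y_1}^{y_{\infty}} \frac{g''(s)}{g'(s)}\, ds = \int_{\rho}^{g(y_{\infty})} \frac{H(g^{-1}(t))}{t}\, dt.
\]
Since Corollary \ref{cor} gives $g(y_{\infty}(\rho)) = \rho + o(1)$ and $H$ is bounded near infinity by (G1), the right-hand side is $O(1/\rho) = o(1)$, proving the first claim with, say, $C = 10$.

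For the second assertion, the additional hypothesis $g(y_{\infty}) - g(z_{\infty}) \le 5$ combined with the first gives $|g(z_{\infty}(\rho)) - \rho| \le 6$ for $\rho$ large. When $z_{\infty} < y_{\infty}$, the same mean value argument with $\xi \in (z_{\infty}, y_{\infty})$ and $g'(\xi) \ge g'(z_{\infty})$ gives $y_{\infty} - z_{\infty} \le 5/g'(z_{\infty})$; I would then apply the same change-of-variables identity to $\log g'(z_{\infty}) - \log g'(y_1) = \int_{\rho}^{g(z_{\infty})} H(g^{-1}(t))/t\, dt$, whose absolute value is again $O(1/\rho)$, to obtain $g'(z_{\infty}(\rho))/g'(y_1(\rho)) \to 1$. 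Combined with the first assertion this yields $|y_{\infty} - z_{\infty}| \le C/g'(y_1)$.

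The only technical point that needs care is the regime $q = 1$, where $g'/g$ can grow without bound (as for $g(s) = e^{s^{2}}$) and a naive estimate $|y_{\infty} - z_{\infty}| \cdot \sup g''/g'$ on the relevant interval would blow up; the change of variables $t = g(s)$ evades this by trading the potentially unbounded integrand for $H(g^{-1}(t))/t$, which stays uniformly bounded on a $t$-range of length $O(1)$ sitting near $t \sim \rho \to \infty$. This is the one place where the uniformity of $C$ in $\rho$ has to be checked with some attention.
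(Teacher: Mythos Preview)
Your argument is correct, and the integral identity
\[
\log g'(b) - \log g'(a) \;=\; \int_{g(a)}^{g(b)} \frac{H(g^{-1}(t))}{t}\,dt
\]
is a neat way to compare derivatives of $g$ at nearby points: it needs only that $H$ is bounded and that the $g$-values differ by $O(\log\rho)$, whereas the paper instead applies the mean value theorem to $g'$ directly and estimates $g''(\hat y)\,|y_2|$ via $g'' = H\,(g')^2/g$ together with \eqref{shape-y2}. Both routes yield $g'(y_\infty)/g'(y_1)=1+o(1)$ and $g'(z_\infty)/g'(y_1)=1+o(1)$; your substitution handles the $q=1$ case more transparently, as you note.

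One small correction: Corollary~\ref{cor} gives $g(y_\infty(\rho)) = \rho - 2\log\rho + O(\log\rho)$, not $\rho + o(1)$, so your integral bound should read $O(\log\rho/\rho)$ rather than $O(1/\rho)$; this does not affect the conclusion since either is $o(1)$. Likewise your ``$|g(z_\infty)-\rho|\le 6$'' should be $O(\log\rho)$, with the same harmless consequence.
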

\begin{proof}
We assume that $g(z_{\infty}(\rho))-g(y_{\infty}(\rho))\le 2$ for some $\rho$ sufficiently large. 
Since $g$ is convex (see Lemma \ref{lem2-2}), 
we obtain 
\begin{align}
  5 \ge g(z_{\infty}(\rho))-g(y_{\infty}(\rho))&=g'(y_{\infty}(\rho))(z_{\infty}(\rho)-y_{\infty}(\rho))+\frac{1}{2}g''(s)(z_{\infty}(\rho)-y_{\infty}(\rho))^2\notag\\
%   &=g'(y_{1})(z_{\infty}-y_{\infty})
%(\frac{g'(y_{\infty})}%{g'(y_1)}+\frac{g''(s)}
%{g'(y_1)}(z_{\infty}(\rho)-%y_{\infty}(\rho)))\notag \\
    &\ge g'(y_{1})(z_{\infty}-y_{\infty})\frac{g'(y_{\infty})}{g'(y_1)}\label{eq-imp-2}
\end{align}
with some $s=z_{\infty}(\rho)+\theta(y_{\infty}(\rho)-z_{\infty}(\rho))$. 
Here, by using \eqref{shape-y2}, we obtain
\begin{equation*}
    \frac{|g(y_1)-g(y_1+y_2)|}{g(y_1)}\le \frac{g'(y_1)|y_2|}{g(y_1)}= O(\frac{\log\rho}{\rho}).
\end{equation*}
Hence, it follows from (G1), \eqref{shape-y2}, $y_{\infty}=y_1+y_2+o(y_2) (< y_{1})$ and the convexity of $g$ that
\begin{equation}
\label{eq-imp-3}
    \frac{g'(y_{\infty})}{g'(y_1)}
    =  \frac{g'(y_{1}) 
    + g''(y_{1} + \tau y_{2} 
    + \tau o(y_{2})) (y_{2} + o(y_{2}))
    }{g'(y_1)}
    \ge 1-C\frac{g'^2(y_1)}{g(y_1+y_2)}|y_2|\ge 1-O(\rho^{-1}\log \rho),  
\end{equation}
where $\tau \in (0, 1)$. 
Combining the above with \eqref{eq-imp-2}, we deduce that
\begin{equation*}
z_{\infty}(\rho)-y_{\infty}(\rho)\le C\frac{1}{g'(y_1)}. 
\end{equation*}
In particular, $z_{\infty}\le y_1+y_2+o(y_2)$. Thus, by using the same argument with the proof of \eqref{eq-imp-3}, we have
\begin{equation*}
    \frac{g'(s)}{g'(y_1)}
    \ge 1-O(\frac{\log\rho}{\rho}) 
\end{equation*}
for any $s$ between $y_{\infty}(\rho)$ and $z_{\infty}(\rho)$. Therefore, if additionally $g(y_{\infty}(\rho))-g(z_{\infty}(\rho))\le 5$ is satisfied, we obtain 
\begin{align*}
  5 \ge g(y_{\infty}(\rho))-g(z_{\infty}(\rho))&=g'(s)(y_{\infty}(\rho)-z_{\infty}(\rho))\\
    &=g'(y_{1})(y_{\infty}-z_{\infty})\frac{g'(s)}{g'(y_1)}\\
    &\ge \frac{1}{2}g'(y_{1})(y_{\infty}-z_{\infty}),
\end{align*}
where $s=z_{\infty}(\rho)+\theta(y_{\infty}(\rho)-z_{\infty}(\rho))$ with some $\theta\in(0,1)$. Then, we can obtain the result.
\end{proof}

We now give the proof of 
Proposition \ref{prop-alt}. 
\begin{proof}[Proof of Proposition \ref{prop-alt}]

From our assumption $\limsup_{\rho \to \infty} 
    (g(z_{\infty}(\rho)) - 
    g(y_{\infty}(\rho))) \leq 1$ 
    and Lemma \ref{lower}, 
    there exists a 
sequence $\{\rho_{n}\}$ with 
$\lim_{n \to \infty} \rho_{n} = \infty$
such that 
\begin{equation} \label{eq-y36}
  2 \geq g(z_{\infty}(\rho_{n}))- 
  g(y_{\infty}(\rho_{n})) 
  \ge -5 \hspace{4mm}\text{for all 
  $n \in \N$}. 
\end{equation}
Then, we have by \eqref{eq-y24} that 
\begin{equation} \label{eq-y100}
    \log g(z_{\infty})\ge \log (g(y_{\infty})-5) 
    %=\log\rho+O(\frac{\log\rho}{\rho})
    =\log g(y_{\infty}) + O(\rho^{-1}).
\end{equation}
In addition, thanks to Lemma \ref{tec}, 
one has 
\begin{equation} \label{eq-y70}
 |y_{\infty}-z_{\infty}| 
 \le \frac{C}{g'(y_1)}.
\end{equation}
Moreover, by \eqref{eq-y35-1}, \eqref{eq-y70}, $y_{\infty} = y_{1} + y_{2} + o(\rho^{-\frac{1}{q}-1+ \e})$ and \eqref{eq-y31}, using $g(y_{1}) = \rho$,
we obtain 
    \begin{equation} \label{eq-y99}
    \begin{split}
    \frac{g}{g'}(z_{\infty})
     =
     \frac{g}{g'}(y_1)+
     (\frac{g}{g'})'(\widetilde{y})(z_{\infty}-y_1)
     & =  \frac{g}{g'}(y_1)+
     (\frac{g}{g'})'(\widetilde{y})(z_{\infty}-y_{\infty}) 
     +  (\frac{g}{g'})'(\widetilde{y})
     (y_{\infty} - y_{1}) \\
     & =  \frac{g}{g'}(y_1)+
     O(\frac{1}{g'(y_{1})})
     +  O(y_{2}) \\
      & =  \frac{g}{g'}(y_1)(1+O(\frac{\log \rho}{g(y_1)})). 
    \end{split}
    \end{equation}
Finally, we have by 
\eqref{eq-y36}, 
\eqref{eq-y100}, 
 \eqref{eq-y99},  
\eqref{eq-y33-1}, 
\eqref{eq-y35}
and \eqref{eq-y24}
that 
    \[
    \begin{split}
     \mathcal{F}
(z_{\infty}(\rho_n))-\rho_n 
    & = 
    g (z_{\infty}(\rho_n)) 
    + 2\log g(z_{\infty}(\rho_n))  
    -\log \frac{g}{g'}(z_{\infty}(\rho_n)) 
    -\log 4H(z_{\infty}(\rho_n)) 
    -\frac{C}{g^{1-\varepsilon}(z_{\infty}(\rho_n))} - \rho_n 
    \\
    & \geq 
    g (y_{\infty}(\rho_n)) - 5 
    + 
    2\log g(y_{\infty}(\rho_n))  
    -\log \frac{g}{g'}(y_{1} (\rho_n)) 
    -\log 4H(y_{\infty}(\rho_n)) 
    -\frac{C}{g^{1-\varepsilon}(y_{\infty}(\rho_n))} - \rho_n + O(1)
    \\
    & \geq g (y_{\infty}(\rho_n)) - 5 
    + 
    2\log \rho_n  
    -\log \frac{g}{g'}(y_{1} (\rho_n)) 
    -\log 4H(y_{1}(\rho_n)) 
    -\frac{C}{g^{1-\varepsilon}
    (y_{1}(\rho_n))} - \rho_n +O(1)
    \\
    & \geq - C.
    \end{split}   
    \]
This completes the proof. 
\end{proof}
Next, we shall show the following: 
\begin{proposition}\label{prop-alt2}
    Let
$z_{\infty}$ be a singular solution to
equation \eqref{ODE} and $y_{\infty}$ 
be the one obtained
by Theorem \ref{thm-sing}. 
We put 
\begin{equation}\label{eq-w1}
    \eta(\rho):= \mathcal{F}(z_{\infty}(\rho)), 
    \hspace{4mm} 
    w(\rho) := \eta(\rho) -\rho,
    \end{equation}
Assume that $ \limsup_{\rho\to\infty} g(z_{\infty}(\rho)) - g(y_{\infty}(\rho))) \leq 1$ and $\limsup_{\rho \to \infty} w(\rho) 
\le 1$. 
Then, there exists a constant $C > 0$ such that 
$w(\rho) \geq - C$
for sufficiently large $\rho > 0$. 
\end{proposition}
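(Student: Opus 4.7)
My plan is to convert Proposition \ref{prop-alt2} into a Lyapunov analysis for a second-order ODE satisfied by $w$, following the strategy announced in the authors' remark after Theorem \ref{thm-bi}.

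First I would derive the governing ODE. Differentiating $\eta = \mathcal{F}(z_\infty)$ twice and inserting the equation $-z_\infty'' = \tfrac{1}{4} e^{g(z_\infty) - \rho}$ gives
\[
w''(\rho) \;=\; \mathcal{F}''(z_\infty)(z_\infty')^2 \;-\; \mathcal{F}'(z_\infty)\cdot\tfrac{1}{4}e^{g(z_\infty)-\rho}.
\]
The defining identity of $\mathcal{F}$, namely
\[
\tfrac{1}{4} e^{g(s) - \rho} \;=\; \frac{H(s)}{g(s)\,g'(s)}\,e^{\,w \,+\, C g^{\varepsilon - 1}(s)},
\]
combined with the expansions $\mathcal{F}'(z) = g'(z)(1+o(1))$, $\mathcal{F}''(z) = g''(z)(1+o(1))$, $H(z) \to 1/q$, and $g(z_\infty) \sim \rho$ (from Lemma \ref{lower}, the hypothesis $\limsup(g(z_\infty)-g(y_\infty)) \le 1$, and \eqref{eq-y24}), yields
\[
w'' \;=\; \frac{1}{q\rho}\bigl[(w'+1)^2 - e^w\bigr] \;+\; R_0(\rho),
\]
with $R_0$ a controlled remainder. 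A crucial structural identity is $w'+1 = \eta' = \mathcal{F}'(z_\infty)\,z_\infty' \ge 0$ (Lemma \ref{basiclem}).

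Second, I would rescale time via $s = 2\sqrt{\rho/q}$, so that $(ds/d\rho)^2 = 1/(q\rho)$, rendering the equation asymptotically autonomous with equilibrium $(w,\dot w) = (0,0)$ corresponding to $y_\infty$. Introducing the potential $V(w) := e^w - w - 1 \ge 0$, which satisfies $V(w)\to\infty$ as $|w|\to\infty$, together with the energy
\[
\mathcal{L}(s) \;:=\; \tfrac{1}{2}\dot w^2 + V(w),
\]
I would establish a differential inequality
\[
\frac{d\mathcal{L}}{ds} \;\le\; \frac{C_1}{s}\,\mathcal{L}(s) + E(s), \qquad E\in L^{1}([s_{0},\infty)),
\]
exploiting the nonlinear structure $(w'+1)^2 - e^w$ together with the one-sided bound $w' \ge -1$ and the assumption $\limsup w \le 1$ to absorb the first-derivative term produced by the change of variables.

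Finally, Proposition \ref{prop-alt} applies under the hypothesis $\limsup(g(z_\infty) - g(y_\infty)) \le 1$, providing a sequence $\rho_n \to \infty$ with $w(\rho_n) \ge -C$. Combined with $\limsup w \le 1$ and the bound $w' \ge -1$, this gives $\mathcal{L}(s_n) \le C'$ along $s_n = 2\sqrt{\rho_n/q}$. A Gronwall argument on $[s_n,\infty)$ then yields $\mathcal{L}(s) \le C''$ for all large $s$, hence $V(w(\rho)) \le C''$, whence $w(\rho) \ge -C'''$. I expect the principal obstacle to lie in establishing the differential inequality for $\mathcal{L}$ with usable monotonicity: the linearization of the rescaled equation around $(0,0)$ has a negative effective damping coefficient $-(1+4/q)/s$, so the Lyapunov estimate cannot rely on standard dissipation but must instead exploit the full nonlinear structure $(w'+1)^2-e^w$, the structural constraint $w' \ge -1$ (i.e., $z_\infty' \ge 0$), and the cancellations engineered by the correction $-C g^{\varepsilon - 1}(s)$ in $\mathcal{F}$ that was designed in Lemma \ref{lem4-5} to guarantee $\mathcal{F}(y_\infty) - \rho \le 0$.
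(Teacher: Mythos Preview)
Your derivation of the ODE for $w$ and the identification of the anti-damping are correct, but the Gronwall step does not close. With $d\mathcal{L}/ds \le (C_1/s)\mathcal{L} + E$ and $C_1 = 1+4/q > 0$, the integrating factor is $s^{-C_1}$, so Gronwall on $[s_n,\infty)$ only yields $\mathcal{L}(s) \lesssim \mathcal{L}(s_n)(s/s_n)^{C_1} + \int_{s_n}^{s}(s/\sigma)^{C_1}E(\sigma)\,d\sigma$, which grows polynomially in $s$; it does not give $\mathcal{L}\le C''$. Since Proposition~\ref{prop-alt} provides no control on the gaps $s_{n+1}/s_n$, you cannot patch this by interpolating between consecutive $s_n$. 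A second difficulty: bounding $\mathcal{L}(s_n)$ requires bounding $\dot w(s_n)^2$, but $\dot w = \tfrac{qs}{2}w'$ and you only know $w'\ge -1$; an upper bound on $w'$ along the sequence is not available from the hypotheses.

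The paper avoids the anti-damping altogether by two devices. First, it \emph{tilts} the potential: the energy is $\mathcal{H}=\tfrac12 W_t^2+\tfrac1q e^W-\tfrac1q(1-\varepsilon_0)W$, with $\varepsilon_0>0$ small. After the precise time change $d\rho = L(z_\infty)^{1/2}\,dt$ (with $L=gg'/\mathcal{F}'\cdot e^{-C/g^{1-\varepsilon}}$, so that $L'\ge0$ exactly), the derivative $d\mathcal{H}/d\rho$ factors as $w'$ times a bracket containing a nonnegative quadratic in $w'$ plus a scalar that, thanks to the $\varepsilon_0$-tilt and $L\mathcal{F}''/(\mathcal{F}')^2\to 1/q$, is eventually $\ge \varepsilon_0/(2q)>0$. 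Hence $d\mathcal{H}/d\rho\ge 0$ \emph{whenever $w'\ge 0$}. Second, the paper does not run Gronwall: it performs a case split on the sign of $w'$. If $w'$ oscillates, the one-sided monotonicity of $\mathcal{H}$ on each increasing interval $[\rho_n^-,\rho_n^+]$ (between a local minimum and the next local maximum) forces $\mathcal{P}(w(\rho_n^-))\le \mathcal{P}(w(\rho_n^+))$ for $\mathcal{P}(x)=e^x-(1-\varepsilon_0)x$; since $\limsup w\le 1$ bounds $\mathcal{P}(w(\rho_n^+))$, this bounds $w(\rho_n^-)$ from below, and the minima control all of $w$. If $w'$ is eventually of one sign, the bound is immediate (increasing case) or follows from Proposition~\ref{prop-alt} plus monotonicity (decreasing case). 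The $\varepsilon_0$-tilt is not cosmetic: with your untilted $V(w)=e^w-w-1$ the scalar part of the bracket is $o(1)$ rather than bounded below by a positive constant, and the sign of $d\mathcal{H}/d\rho$ on $\{w'\ge0\}$ is lost.
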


We prepare the following lemma to prove 
Proposition \ref{prop-alt2}. 
\begin{lemma}\label{lem4-3}
  Let $\mathcal{F}(s)$ 
  be the function defined 
  by \eqref{func-h}. Then, 
  $\mathcal{F}$ is non-decreasing 
  and convex for sufficiently 
  large $s > 0$.  
  In addition, we have 
    \begin{equation}
    \label{eq-y105}
    \lim_{s \to \infty} 
    \frac{\mathcal{F}'(s)}{g'(s)} = 
     \lim_{s \to \infty} 
    \frac{\mathcal{F}''(s)}{g''(s)} 
    =
    1. 
    \end{equation}
\end{lemma}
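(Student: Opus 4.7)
The plan is to differentiate the explicit formula \eqref{func-h} twice and check term by term that, after dividing by $g'(s)$ (respectively by $g''(s)$), every contribution other than the leading one vanishes as $s \to \infty$. The monotonicity and convexity then follow from the fact that $g' > 0$ and $g'' > 0$ for large $s$ (Lemma \ref{lem2-2}), together with the two limits in \eqref{eq-y105}.

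First I would compute
\[
\mathcal{F}'(s) = g'(s) + \frac{g'(s)}{g(s)} + \frac{g''(s)}{g'(s)} - \frac{H'(s)}{H(s)} + C(1-\varepsilon)\frac{g'(s)}{g^{2-\varepsilon}(s)},
\]
using $(\log(g/g'))' = g'/g - g''/g'$. Dividing by $g'(s)$ and using the key identity $g''/(g')^{2} = H/g$ from \eqref{eq-y11}, one gets
\[
\frac{\mathcal{F}'(s)}{g'(s)} = 1 + \frac{1}{g(s)} + \frac{H(s)}{g(s)} - \frac{H'(s)}{H(s)\, g'(s)} + \frac{C(1-\varepsilon)}{g^{2-\varepsilon}(s)}.
\]
Since $g(s)\to\infty$ (Lemma \ref{lem2-2}), $H(s)\to 1/q>0$ (assumption (G1)), and $|H'(s)|\le C g'(s)/g(s)$ (assumption (G2)), each correction is $o(1)$, giving the first limit in \eqref{eq-y105}.

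Next I would differentiate once more, writing
\[
\mathcal{F}''(s) = g''(s) + \Bigl(\tfrac{g'}{g}\Bigr)'(s) + \Bigl(\tfrac{g''}{g'}\Bigr)'(s) - \Bigl(\tfrac{H'}{H}\Bigr)'(s) + C(1-\varepsilon)\Bigl(\tfrac{g'}{g^{2-\varepsilon}}\Bigr)'(s),
\]
and divide by $g''(s)$. Using the same identity $(g')^{2}/g'' = g/H$, the first two correction terms reduce to combinations of $1/g$, $H/g$ and $g'''/(g'g'')$; the third reduces to combinations of $H''/(Hg'')$ and $(H')^{2}/(H^{2}g'')$; and the last to $1/(g^{2-\varepsilon}H)$ and $g^{-(2-\varepsilon)}$. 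Applying Lemma \ref{lem2.7} (to bound $g'''$ by $C(g')^{3}/g^{2}$) and assumption (G2) (to bound $H'$, $H''$ by $C(g'/g)^{k}$), every one of these expressions is of the form $O(1/(g H^{k}))$ for some $k\ge 1$ and hence tends to $0$. This yields the second limit in \eqref{eq-y105} and in particular $\mathcal{F}''(s) > 0$, $\mathcal{F}'(s) > 0$ for all sufficiently large $s$.

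The only mild obstacle is bookkeeping in the $\mathcal{F}''$ computation: one must be careful that the apparent $1/g$ appearing from $(g'/g)'$ and $(g''/g')'$ is not the leading term but a genuine remainder once divided by $g''$, because $g''$ may itself be small (e.g.\ when $q=1$, $g''/(g')^{2} = H/g$ is of order $1/g$); this is resolved by always forming the quotient with $g'/g''$ via the identity $(g')^{2}/g'' = g/H$, which converts every power of $g'$ in the numerator into a bounded quantity times a negative power of $g$, and then invoking (G2) and Lemma \ref{lem2.7}.
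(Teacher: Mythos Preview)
Your proof is correct and follows essentially the same approach as the paper: explicit differentiation of \eqref{func-h} followed by term-by-term estimation using (G1), (G2), and Lemma~\ref{lem2.7}, with the key identity $g''/(g')^{2}=H/g$ (which is just the definition of $H$, not \eqref{eq-y11}) to control the ratios. The paper organizes the computation slightly differently---proving $\mathcal{F}',\mathcal{F}''\ge 0$ directly via explicit lower bounds \eqref{eq-y101}--\eqref{eq-y102} and then reading off \eqref{eq-y105}---but the substance is identical.
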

\begin{proof}
We recall that
\begin{equation*}
    \mathcal{F}(s) 
    =g(s) +2\log g(s)-\log (g/g'(s)) - 
    \log (4H(s)) -\frac{C}{g^{1-\e}(s)}.
\end{equation*}
Then, it follows from the conditions 
(G1), (G2) and $g''(s) = H(s) 
(g'(s))^2/(g(s))$
that
\begin{equation} \label{eq-y101}
\begin{split}
    \mathcal{F}'(s) 
    & =g'(s)(1+\frac{2}{g(s)}) - 
    \frac{g'(s)(1-H(s))}{g(s)} 
    - \frac{H'(s)}{H(s)}
    +C(1-\e)g'(s) 
    g^{-2+\varepsilon}(s) \\
    & \geq g'(s)
    \left( 
    1+\frac{2}{g(s)}- 
    \frac{(1-H(s))}{g(s)}
    - \frac{C}{g(s)H(s)}
    + C (1-\e) 
    g^{-2+\varepsilon}(s)
    \right)
    \ge 0, 
\end{split}
\end{equation}
\begin{equation} \label{eq-y102}
\begin{split}
    \mathcal{F}''(s) 
    & = g''(s)(1+\frac{2}{g(s)}) - 
    \frac{2g'^2(s)}{g^2(s)}+\frac{g'(s)H'(s)}{g(s)}
    - \frac{(g''(s)g(s)-g'^2(s)) (1-H(s))}{g^2(s)} 
    \\
    & \hspace{4mm} 
    - \frac{H''(s)H(s) - (H'(s))^2}{H^{2}(s)}
    + C(1-\e) (g''(s)g^{-2+\e}(s)-(2-\e)g^{-3+\e}(s)g'^2(s)) \\
    & \geq 
    \frac{g'^2(s)}{q g(s)} 
    - 
    C \frac{g'^2(s)}{g^2(s)}
    \ge 0.
\end{split}
\end{equation}
We can easily see that \eqref{eq-y105} 
follows 
from the above. 
This completes the proof. 
\end{proof}
Using Lemma \ref{lem4-3}, we shall show 
Proposition \ref{prop-alt2}. 
\begin{proof}[Proof of Proposition 
\ref{prop-alt2}]
Since the proof is rather long, 
we divide the proof into three steps. 

\textbf{(Step 1).} 
We introduce an energy function $\mathcal{H}$ 
(see \eqref{eq-y75} below) and obtain some monotonicity of $\mathcal{H}$. 

We define 
\begin{equation} \label{eq-y110}
    L(s) :=\frac{g(s)g'(s)}
    {\mathcal{F}'(s)} 
    \exp\left[- \frac{C}{g^{1-\e}
    (s)}\right], 
\end{equation}
where $C > 0$ is the constant given in 
\eqref{func-h}. 
Then, by \eqref{eq-y101} and \eqref{eq-y102}, 
we obtain
\begin{equation} \label{eq-y111}
\begin{split}
L'(s) 
& = 
\exp\left[- \frac{C}{g^{1-\varepsilon}(s)}\right](\frac{(g(s) g''(s) + g'^2(s))\mathcal{F}'(s) - g(s)g'(s)\mathcal{F}''(s)}{\mathcal{F}'^2(s)}+\frac{C(1- \e)}
{g^{2-\varepsilon}(s)}\frac{g(s)(g'(s))^2}{\mathcal{F}'(s)}) \\
& \geq 
\frac{1}{\mathcal{F}'^2(s)}
\exp\left[- \frac{C}{g^{1-\varepsilon}(s)}\right]((g(s) g''(s) + g'^2(s))\mathcal{F}'(s) - g(s)g'(s)
\mathcal{F}''(s)) \\ 
& \geq \frac{1}{\mathcal{F}'^2(s)}
\exp\left[- \frac{C}{g^{1-\varepsilon}(s)}\right](g'^3(s) -C\frac{g'^3}{g}(s)) 
\ge 0,  
\end{split}
\end{equation}

%\begin{equation}
%\label{tec3}
%L'\ge 0, h''\ge 0.
%\end{equation}
\begin{equation*} \label{eq-y71}
    \frac{d^{2} \eta}{d \rho^2}-\frac{\mathcal{F}''(z_{\infty})}
    {\mathcal{F}'^2(z_{\infty})}
    \left(\frac{d \eta}{d \rho}\right)^2 +  
    \frac{\mathcal{F}'
    (z_{\infty})e^{g(z_{\infty})-\rho}}{4}=0. 
\end{equation*}
Here, we remark that
by \eqref{func-h}, we have 
    \[
    g(s) = 
    \mathcal{F}(s) 
    - \log g(s) - \log g'(s) 
    + \log (4 H(s))
    + C/g^{1-\varepsilon}(s), 
    \]
so that 
\begin{equation*} \label{eq-y72}
\frac{\mathcal{F}'(z_{\infty}) 
e^{g(z_{\infty}) - \rho}}{4}= 
\frac{\mathcal{F}'(z_{\infty})}
{g(z_{\infty})g'(z_{\infty})}
H(z_{\infty})
e^{C/g^{1-\e}(z_{\infty})}
e^{\mathcal{F}(z_{\infty}) 
-\rho} = 
\frac{H(z_{\infty}) 
e^{\mathcal{F}(z_{\infty}) - \rho}}{L(z_{\infty})}.   
\end{equation*}
Observe that      
$w$ defined by \eqref{eq-w1} satisfies
\begin{equation} \label{eq-y104}
 L(z_{\infty}) 
 \frac{d^{2} w}{d \rho^2} -\frac{\mathcal{F}''(z_{\infty}) 
 L(z_{\infty})}{
 (\mathcal{F}' (z_{\infty}))^2} 
 (\frac{d w}{d \rho} +1)^2 + 
 H(z_{\infty}) e^{w} =0.   
\end{equation}
Here, we set 
    \begin{equation*}\label{eq-W2}
    W(t)=w(\rho)
    \end{equation*}
with $d\rho=L^{1/2} (z_{\infty}(\rho))dt$.
Then, 
since 
    \[
    \frac{d w}{d \rho} = 
\frac{d \eta}{d \rho} - 1 
= \mathcal{F}'(z_{\infty}(\rho)) 
\frac{d z_{\infty}}{d \rho} - 1 
    \]
we have 
\begin{equation*}
    \frac{d W}{d t} = \frac{d w}{d \rho} 
    L^{1/2}(z_{\infty}), 
    \hspace{4mm} 
    \frac{d^2 W}{d t^{2}} = 
    L (z_{\infty}) \frac{d^2 w}{d \rho^2} 
    +\frac{1}{2} 
    L'(z_{\infty}) \frac{d w}{d \rho} \frac{d z_{\infty}}{d \rho}
    = L(z_{\infty}) \frac{d^2 w}{d \rho^2}  
    +\frac{L'(z_{\infty})}{2\mathcal{F}'(z_{\infty})}
    \frac{d w}{d \rho}(\frac{d w}{d \rho} + 1). 
\end{equation*}
Thus, we deduce from \eqref{eq-y104} that
\begin{equation*}
\frac{d^2 W}{d t^2} - 
\frac{L'(z_{\infty})}
{2\mathcal{F}'(z_{\infty})} 
(\frac{d w}{d \rho}+1) 
\frac{d w}{d \rho} -
\frac{\mathcal{F}''
(z_{\infty}) L(z_{\infty})}{
(\mathcal{F}'(z_{\infty}))^2}
\left(\frac{d w}{d \rho} + 1\right)^2
+ H(z_{\infty}) e^{W} = 0. 
\end{equation*} 
The above equation can be written by 
    \begin{equation}\label{eq-W}
    \frac{d^2 W}{d t^2} - 
\left[\frac{L'(z_{\infty})}
{2\mathcal{F}'(z_{\infty})} 
(\frac{d w}{d \rho}+1)+
\frac{\mathcal{F}''(z_{\infty}) 
L(z_{\infty})}
{(\mathcal{F}'(z_{\infty}))^2}
\left(\frac{d w}{d \rho} + 2\right)
\right]\frac{d w}{d \rho} + 
H(z_{\infty}) e^{W} - 
\frac{\mathcal{F}''(z_{\infty}) 
L(z_{\infty})}
{(\mathcal{F}'(z_{\infty}))^2} = 0. 
    \end{equation}
By the assumption $\limsup_{\rho \to \infty} 
(g(z_{\infty}) - g(y_{\infty})) < 1$, \eqref{shape-y2} and Lemma \ref{tec}, we have $|z_{\infty}-y_{\infty}|\le \frac{C}{g'(y_1)}$ and $z_{\infty}\le y_1$. Hence, 
using this, \eqref{eq-y34} and the convexity of $\mathcal{F}$ (see Lemma 
\ref{lem4-3}), we have
    \[
    W(t) = 
    \mathcal{F}(z_{\infty}(\rho)) - 
    \rho
    \leq 
    \mathcal{F}'(y_\infty(\rho)) 
    (z_{\infty}(\rho)- 
    y_{\infty}(\rho)) +C
    \le  C \hspace{4mm} 
    \mbox{for all $\rho$ sufficiently large}.  
    \]
This yields that 
    \[
    e^{W} \le  C 
    \hspace{4mm} 
    \mbox{for all $\rho$ sufficiently large}.
    \]
Thus, we have
\begin{equation*}
\left(\frac{1}{q}-H (z_{\infty})
\right)e^W=o(1) 
 \hspace{4mm} \mbox{as $\rho \to \infty$}
\end{equation*}
and by \eqref{eq-y110} and 
Lemma \ref{lem4-3} that 
\begin{equation*}
\begin{split}
    \frac{L (z_{\infty})
    \mathcal{F}''(z_{\infty})}
    {\mathcal{F}'^2(z_{\infty})}-\frac{1}{q}
    & = \frac{g(z_{\infty})g'(z_{\infty})
    \mathcal{F}''(z_{\infty})}{\mathcal{F}'(z_{\infty})^3} 
    e^{- C/g^{1-\e}(z_{\infty})} -
    \frac{1}{q} + o(1) \\
    & = 
    \frac{g''(z_{\infty})g(z_{\infty})}{g'^2(z_{\infty})}
     - \frac{1}{q} + o(1) \to 0 
    \hspace{4mm} \mbox{as $\rho \to \infty$}. 
\end{split}
\end{equation*}
%\textcolor{
%\begin{equation*}
%\begin{split}
%    \frac{L (z_{\infty})h''(z_{\infty})}
%    {h'^2(z_{\infty})}-G(z_{\infty}) e^W 
%    & = \frac{g(z_{\infty})g'(z_{\infty})
%    h''(z_{\infty})}{h'(z_{\infty})^3} 
%    e^{- C/g^{\frac{1}{2}}}-G(z_{\infty}) + o(1) \\
%    & = 
%    \frac{g''(z_{\infty})g(z_{\infty})}{g'^2(z_{\infty})}
%     - G(z_{\infty}) + o(1) \to 0 
%    \hspace{4mm} \mbox{as $\rho \to \infty$}. 
%\end{split}
%\end{equation*}
%}
Moreover, by Lemma \ref{lem4-3} and 
\eqref{eq-y111}, we can verify that
\begin{equation} \label{eq-y112} 
    \frac{L(z_{\infty})
    \mathcal{F}''(z_{\infty})}{\mathcal{F}'^2(z_{\infty})} 
    \ge 0,\hspace{4mm}
    \frac{L'(z_{\infty})}{2\mathcal{F}'(z_{\infty})} 
    \ge 0.
\end{equation}
Let $\e_0>0$ be a small constant. We put 
\begin{equation} \label{eq-y75}
    \mathcal{H}(\rho) := 
    \frac{1}{2}\left(\frac{d W}{d t}(t)\right)^2 
    +\frac{1}{q} e^{W(t)} - \frac{1}{q}(1-\varepsilon_0)W(t). 
\end{equation}
Then, if $\frac{d w}{d \rho}\ge 0$, 
we see from \eqref{eq-W} and 
\eqref{eq-y112} that 
    \begin{equation} \label{eq-y73}
    \begin{split}
    \frac{d \mathcal{H}}{d \rho} 
 & = \frac{d t}{d \rho} 
    \left\{\frac{d W}{d t} \frac{d^2 W}{d t^2} 
    + \frac{1}{q} e^{W} \frac{d W}{d t} 
    - \frac{1}{q}(1 - \e_0) 
    \frac{d W}{d t} 
    \right\} \\
    & =\frac{d t} {d \rho} 
    \frac{d W}{d t}
    \left\{ 
    \frac{d^2 W}{d t^2} 
    + qe^{W} 
    - \frac{1}{q}(1 - \e_0)  
    \right\} \\
    & =
    \frac{d w}{d \rho}\Big\{
    \left[\frac{L'(z_{\infty})}
{2\mathcal{F}'(z_{\infty})} 
(\frac{d w}{d \rho}+1)+
\frac{
L(z_{\infty})
\mathcal{F}''(z_{\infty})}
{(\mathcal{F}'(z_{\infty}))^2}
\left(\frac{d w}{d \rho} + 2\right)
\right]\frac{d w}{d \rho} \\
& \hspace{4mm}
+ (\frac{1}{q}-   H(z_{\infty}))e^{W} 
- \frac{1}{q}(1 - \e_0) + \frac{L(z_{\infty}) 
\mathcal{F}''(z_{\infty})}
{(\mathcal{F}'(z_{\infty}))^2}
\Big\} \\
  &\ge  \frac{d w}{d \rho}
((\frac{1}{q}-   H(z_{\infty}))e^{W} 
- \frac{1}{q}(1 - \e_0) + \frac{ L(z_{\infty}) \mathcal{F}''(z_{\infty})}
{(\mathcal{F}'(z_{\infty}))^2}) 
\geq \frac{\e_0}{2 q} \frac{d w}{d \rho}
\ge 0.
    \end{split}    
    \end{equation}

\textbf{(Step 2).}
We consider the case that the sign of $w'$ changes infinitely many times and obtain 
the desired result.
In this case, $w$ is non-decreasing in $[\rho^{-}_n, \rho^{+}_n]$, where $\rho^{\pm}_{n}$ are local max/min of $w$ so that $\rho^{\pm}_{n}<\rho^{\pm}_{n+1}$ for any $n\in\mathbb{N}$, $\rho^{\pm}_1$ are sufficiently large and $\rho^{\pm}_{n}\to\infty$ as $n\to\infty$.
We set $-\delta:=\log (1-\varepsilon_0)$. Then, 
we claim that $-\delta\le w(\rho^{+}_{n})$. 
Suppose to the contrary that $-\delta >  w(\rho^{+}_{n})$. 
Setting $\mathcal{P}(x) = e^{x} - (1 - \e_0)x$,   
we can easily verify that 
$\mathcal{H}(\rho) = 
\frac{1}{2} \left(d W/d t (t)\right)^2 
+ \frac{1}{q} \mathcal{P}(W(t))$, 
$\mathcal{P}(-\delta) 
= \min_{x \in \R} \mathcal{P}(x)$ and 
$\mathcal{P}'(x) < 0$ for $x < - \delta$. 
This together with our hypothesis 
$-\delta >  w(\rho^{+}_{n})$,  
$\rho^{-}_n <\rho^{+}_n$ and 
the monotonicity of $w$ 
in $[\rho^{-}_n, \rho^+_n]$
implies that 
    \begin{equation} \label{eq-y74}
    \mathcal{P}(w(\rho^{-}_{n})) > 
\mathcal{P}(w(\rho^{+}_{n})).
    \end{equation}
On the other hand, since 
$\frac{d w}{d \rho} \geq 0$ 
in $[\rho^{-}_n, \rho^{+}_n]$, 
we have by \eqref{eq-y73} that 
$\mathcal{H}(\rho^{-}_{n}) \leq 
\mathcal{H}(\rho^{+}_{n})$. 
Observe that 
$\frac{d w}{d \rho}(\rho^{\pm}_{n}) = 0$ 
because $\rho^{\pm}_{n}$ are local max/min points of $w$. This implies 
that $\mathcal{H}(\rho^{\pm}_{n}) 
= \mathcal{P}(\rho^{\pm}_n)$, so that 
$\mathcal{P}(\rho^{-}_{n}) \leq 
\mathcal{P}(\rho^{+}_{n})$ which contradicts 
\eqref{eq-y74}. Thus, our claim holds.

Then, since 
$w(\rho^{+}_n)\le 2$~\footnote{recall that we assume that 
 $\limsup_{\rho \to \infty} w(\rho) 
\le 1$.} and 
$\rho^{+}_n \geq - \delta$ for all $n \in \N$, we have by \eqref{eq-y73} that 
\begin{equation*}
e^{w(\rho^{-}_{n})}-(1-\varepsilon_0)w(\rho^{-}_{n}) = \mathcal{P}(w(\rho^{-}_{n})) 
\leq \mathcal{P}(w(\rho^{+}_{n}))
\le \mathcal{P}(2).
\end{equation*}
Therefore, we get 
$w(\rho) \ge 
- \frac{\mathcal{P}(2)} 
{1-\varepsilon}$ for all
$\rho > 0$ sufficiently large.

\textbf{(Step 3).}
Finally, we consider the case $w'\ge 0$ or $w'\le 0$ in $[\rho_{*}, \infty)$ for 
some $\rho_{*} > 0$. 
For the first case, we deduce that $\lim_{\rho\to\infty} w(\rho) 
\ge w(\rho_{*})$. Secondly, we consider the case $w'\le 0$. 
Observe from Proposition \ref{prop-alt} that 
there exists a sequence $\{\rho_{n}\}$ 
with $\lim_{n \to \infty} w(\rho_{n}) 
= \lim_{n \to \infty} 
(\mathcal{F}(z_{\infty}(\rho_{n})) 
- \rho_n)
\geq - C$ for some $C > 0$. 
This together with the monotonicity of 
$w$ that
    \[
    \inf_{\rho > \rho_{*}} w(\rho) 
    = \lim_{n \to \infty} w(\rho_{n}) 
    \geq - C.
    \]
Thus, in both cases, 
we have $w >-C$ in $[\rho_{*}, \infty)$.
This completes the proof. 
\end{proof}

We are now in position to prove 
Theorem \ref{thm4-1}. 
\begin{proof}[Proof of Theorem \ref{thm4-1}]
Clearly, we see that 
one of the following three cases occurs:
\begin{enumerate}
    \item[(Case 1).]
    $\limsup_{\rho\to\infty} g(z_{\infty}(\rho)) - g(y_{\infty}(\rho))) \leq 1$ and $\limsup_{\rho \to \infty} w(\rho) 
\le 1$. 
     \item[(Case 2).]
$ \limsup_{\rho\to\infty} g(z_{\infty}(\rho)) - g(y_{\infty}(\rho))) > 1$.
    \item[(Case 3).]
    $\limsup_{\rho \to \infty} w(\rho) 
> 1$.
\end{enumerate}
First, we consider (Case 1). 
Then, by  Proposition \ref{prop-alt2}, 
we see that \textrm{(ii)} holds. 
Next, suppose that (Case 2) occurs. 
Then, we see that \textrm{(i)} holds. 
Finally, if (Case 3) occurs, 
we see from  
\eqref{eq-y34} and 
\eqref{eq-w1} that \textrm{(iii)} holds. 
This completes the proof. 
\end{proof}

\section{Intersection result and proof 
of Theorem \ref{thm-bi}}
\label{sec-bif}
In this section, we obtain an intersection result and 
give a proof of Theorem \ref{thm-bi}. 
We first quote the following.
\begin{proposition}
\label{critical prop}
Let $R\ge 1$. Then, for any $\varphi\in C^{0,1}_{0}(B_1)$, we have
\begin{equation*}
    \frac{1}{4}\int_{B_1}\frac{\varphi^2}{|x|^2(\log(R/|x|))^2}\,dx \le \int_{B_1}|\nabla \varphi|^2\,dx.
\end{equation*}
Moreover, for any $\varepsilon>0$, there exist  sequences $\{\varphi_n\}_{n\in \mathbb{N}}\subset C^{0,1}_{0}(B_1)$ and $r_i \downarrow 0$ such that
$\mathrm{supp}(\varphi_i)=[r_{i+1},r_{i}]$ for all $i\in \mathbb{N}$ and
\begin{equation}
\label{reversecritical}
    \frac{1+\varepsilon}{4}\int_{B_1}\frac{\varphi^2_{i}}{|x|^2(\log(R/|x|))^2}\,dx > \int_{B_1}|\nabla \varphi_{i}|^2\,dx \hspace{4mm} \text{for any $i\in \mathbb{N}$}.
\end{equation}
\end{proposition}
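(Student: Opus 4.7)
My approach to the first inequality is an integration-by-parts (divergence) identity. Consider the vector field $F(x) := x/(|x|^2 \log(R/|x|))$; a direct computation in dimension two gives $\nabla \cdot F = 1/(|x|^2 \log^2(R/|x|))$. Multiplying by $\varphi^2$ and integrating on $B_1 \setminus B_\delta$, the boundary term on $\partial B_1$ vanishes since $\varphi \in C^{0,1}_0(B_1)$, while the flux through $\partial B_\delta$ is controlled by $C \|\varphi\|_\infty^2 / \log(R/\delta)$ and hence vanishes as $\delta \to 0$. This yields
\begin{equation*}
\int_{B_1} \frac{\varphi^2}{|x|^2 \log^2(R/|x|)}\,dx = -2 \int_{B_1} \varphi\,\nabla \varphi \cdot F\,dx,
\end{equation*}
and since $|\nabla \varphi \cdot F| \le |\nabla \varphi|/(|x| \log(R/|x|))$ (which handles the non-radial case automatically via $|\partial_r \varphi| \le |\nabla \varphi|$), Cauchy--Schwarz and rearrangement give the stated inequality with sharp constant $1/4$.

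For the second claim I construct quasi-optimizers modelled on the formal ground state. Indeed, $h(r) := (\log(R/r))^{1/2}$ satisfies $-\Delta h = h/(4|x|^2 \log^2(R/|x|))$, and the substitution $t := \log(R/r)$ (with $\varphi(r) = \psi(t)$ for radial $\varphi$) identifies the whole inequality with the classical one-dimensional Hardy inequality $\int_{\log R}^\infty (\psi')^2\,dt \ge \frac{1}{4}\int_{\log R}^\infty \psi^2/t^2\,dt$, whose sharp constant is attained in the limit by $\psi(t) = t^{1/2}$. Fix a rapidly growing sequence $t_1 < t_2 < \cdots$ with $t_1 > \log R$ and $t_{i+1}/t_i \to \infty$ (for instance $t_i = 2^{2^i}$), and set $r_i := R e^{-t_i}$, which decreases strictly to $0$. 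Define $\psi_i$ on $[t_i, t_{i+1}]$ to be the piecewise-linear function equal to $t^{1/2}$ on the middle piece $[2 t_i, t_{i+1}/2]$ and interpolating linearly to $0$ on $[t_i, 2 t_i]$ and on $[t_{i+1}/2, t_{i+1}]$. Then $\varphi_i(x) := \psi_i(\log(R/|x|))$ is Lipschitz with $\mathrm{supp}(\varphi_i) = [r_{i+1}, r_i]$.

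On the middle piece $(\psi_i')^2 = 1/(4t)$ and $\psi_i^2/t^2 = 1/t$, producing $\frac{1}{4}\log(t_{i+1}/(4 t_i))$ and $\log(t_{i+1}/(4 t_i))$ respectively, while each linear taper contributes only an $O(1)$ amount to $\int(\psi_i')^2\,dt$ and a nonnegative amount to $\int \psi_i^2/t^2\,dt$; hence
\begin{equation*}
\frac{\int_{B_1} |\nabla \varphi_i|^2\,dx}{\int_{B_1} \varphi_i^2/(|x|^2 \log^2(R/|x|))\,dx} \le \frac{O(1) + \frac{1}{4}\log(t_{i+1}/(4 t_i))}{\log(t_{i+1}/(4 t_i))},
\end{equation*}
which tends to $\frac{1}{4}$ as $i \to \infty$. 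Discarding finitely many indices and relabelling gives a sequence satisfying \eqref{reversecritical}. The most delicate point is the vanishing of the boundary term at the origin in the integration-by-parts identity, controlled uniformly by the logarithmic factor; once this is handled, everything else is elementary bookkeeping in the $t$ variable.
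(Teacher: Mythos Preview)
Your argument is correct. The paper does not prove this proposition itself; it simply cites \cite[Proposition~2.5]{kuma2025-2}, so there is no in-paper proof to compare against. The route you take is the standard one: the divergence identity for $F(x)=x/(|x|^{2}\log(R/|x|))$ in dimension two, together with Cauchy--Schwarz, yields the sharp constant $1/4$ directly, and the substitution $t=\log(R/r)$ reduces the construction of quasi-optimizers with disjoint annular supports to the one-dimensional Hardy inequality on $(\log R,\infty)$, where truncations of $t^{1/2}$ do the job. One small point worth recording: when $R=1$ the weight is also singular on $\partial B_{1}$, but the Lipschitz vanishing of $\varphi$ there gives $|\varphi(x)|\le L(1-|x|)$, so the flux through $\partial B_{1-\delta}$ is $O(\delta)$ and your integration-by-parts step goes through unchanged.
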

This proposition is 
obtained by the proof of \cite[Proposition 
2.5]{kuma2025-2}.

Next, we introduce a useful intersection result.
%\textcolor{
%{(instability ? or intersection ?)}
We say that the solution $u$ of \eqref{singulareq-expo} is  \textit{stable} if
%For $\Omega \subset B_{1}$ and a 
%solution $u$ to \textcolor{
%{\eqref{eq-expo}},  
%we say that $u$ is \textit{stable} 
%in $\Omega$ if 
    \[ 
    \int_{\R^2} f'(u) \xi^2 dx 
    \leq \int_{\R^2} |\nabla \xi|^2 dx 
    \hspace{4mm} \mbox{for all $\xi \in 
    C_{0}^{\infty}(\R^2)$}. 
    \]
Otherwise, we say that $u$ is unstable. 
Then, we show the following
\begin{lemma}
\label{unstable-singular}
Assume that $g$ satisfies (G1) and (G2) with some $q\ge 1$.
Let $V_{\infty}\in C^2(0, \infty)$ be a singular solution of \eqref{singulareq-expo} satisfying $\mathcal{F}(V_{\infty})+2\log r>-C_0$ with some $C_0>0$ for any $r>0$ sufficiently small. Then, we obtain 
\begin{equation*}
f'(V_{\infty})\ge \frac{2+o(1)}{qe^{C_0}r^2 (-\log r)}
\end{equation*}
as $r\to 0$. In particular, $V_{\infty}$ is unstable in $B_r$ for any $r>0$.
\end{lemma}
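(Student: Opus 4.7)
The plan is twofold. First, I invert the defining relation for $\mathcal{F}$ to express $f'(V_{\infty})$ directly in terms of $\mathcal{F}(V_{\infty})$, so that the standing hypothesis $\mathcal{F}(V_\infty)+2\log r>-C_0$ converts immediately into the desired pointwise estimate. Second, I combine this estimate with the reverse Hardy-type inequality in Proposition \ref{critical prop} to exhibit explicit test functions violating the stability inequality in any ball $B_{r}$.

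For the pointwise bound, I solve
\[
\mathcal{F}(s)=g(s)+2\log g(s)-\log\bigl((g/g')(s)\bigr)-\log(4H(s))-Cg^{-(1-\varepsilon)}(s)
\]
for $g(s)$, exponentiate, and multiply by $g'(s)$. Using $f'=g'e^{g}$, this produces the identity
\[
f'(V_{\infty})=\frac{4H(V_{\infty})}{g(V_{\infty})}\,e^{\mathcal{F}(V_{\infty})}\,\exp\!\Bigl(\frac{C}{g^{1-\varepsilon}(V_{\infty})}\Bigr).
\]
The hypothesis yields $e^{\mathcal{F}(V_{\infty}(r))}\ge e^{-C_{0}}r^{-2}$. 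Since $V_{\infty}(r)\to\infty$ as $r\to 0$, (G1) gives $H(V_{\infty})\to 1/q$ and the last exponential factor tends to $1$. Crucially, Lemma \ref{basiclem} provides the upper bound $g(V_{\infty})\le -2\log r+\log 4=(-\log r)(2+o(1))$; this is precisely what is needed so that the factor $1/g(V_{\infty})$ yields the constant $2$. Assembling these four ingredients in the identity gives
\[
f'(V_{\infty}(r))\ \ge\ \frac{2+o(1)}{q\,e^{C_{0}}\,r^{2}(-\log r)}\qquad\text{as }r\to 0,
\]
which is the first claim.

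For instability, fix $r_{*}\in(0,1)$ and apply Proposition \ref{critical prop} with $R=1$ and some $\varepsilon\in(0,1)$ to obtain a sequence $\{\varphi_{i}\}\subset C_{0}^{0,1}(B_{1})$ with $\operatorname{supp}\varphi_{i}=[r_{i+1},r_{i}]$ and $r_{i}\downarrow 0$ satisfying \eqref{reversecritical}. Choose $i$ so large that $r_{i}<r_{*}$ (hence $\varphi_{i}\in C_{0}^{0,1}(B_{r_{*}})$) and the pointwise bound above holds on $\operatorname{supp}\varphi_{i}$. Since $-\log|x|\ge -\log r_{i}$ on this support and
$\frac{1}{|x|^{2}(-\log|x|)}=\frac{-\log|x|}{|x|^{2}(-\log|x|)^{2}},$
the estimate of the previous paragraph gives
\[
\int_{B_{1}}f'(V_{\infty})\varphi_{i}^{2}\,dx\ \ge\ \frac{(2+o(1))(-\log r_{i})}{q\,e^{C_{0}}}\int_{B_{1}}\frac{\varphi_{i}^{2}}{|x|^{2}(-\log|x|)^{2}}\,dx\ >\ \int_{B_{1}}|\nabla\varphi_{i}|^{2}\,dx,
\]
where the last inequality follows from \eqref{reversecritical} once $-\log r_{i}$ exceeds $(1+\varepsilon)q\,e^{C_{0}}/(4(2+o(1)))$, which occurs for all $i$ sufficiently large. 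This violates stability in $B_{r_{*}}$, and since $r_{*}>0$ was arbitrary, $V_{\infty}$ is unstable in every ball. No step is conceptually hard; the only delicate point is checking that the leading constant $2$ emerges correctly, which hinges on the sharp form of the upper bound $g(V_{\infty})\le -2\log r+\log 4$ from Lemma \ref{basiclem}.
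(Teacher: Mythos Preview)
Your proof is correct and follows essentially the same approach as the paper: both invert the definition of $\mathcal{F}$ to obtain the identity $f'(V_\infty)=\dfrac{4H(V_\infty)}{g(V_\infty)}\,e^{\mathcal{F}(V_\infty)}\exp\bigl(C\,g^{\varepsilon-1}(V_\infty)\bigr)$, then feed in the hypothesis $\mathcal{F}(V_\infty)+2\log r>-C_0$, the limit $H\to 1/q$, and the bound $g(V_\infty)\le -2\log r+\log 4$ from Lemma~\ref{basiclem}. For instability the paper simply cites Proposition~\ref{critical prop}, whereas you spell out explicitly how the test functions $\varphi_i$ from \eqref{reversecritical} violate the stability inequality once $-\log r_i$ is large; this extra detail is correct and helpful but not a different method. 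One cosmetic point: you restrict to $r_*\in(0,1)$, which is harmless since instability in $B_{r_*}$ for small $r_*$ trivially propagates to all larger balls.
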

\begin{remark}
\label{rem5-1}
    \rm{From Lemma \ref{lem4-5} and Lemma \ref{unstable-singular}, we see that $U_{\infty}$ also satisfies the above assertions.}
\end{remark}

\begin{proof}[Proof of Lemma \ref{unstable-singular}.]
By the definition of $\mathcal{F}$ 
(see \eqref{func-h}) 
and Lemma \ref{basiclem}, we obtain 
    \begin{align*}
f'(V_\infty)&= e^{g(V_\infty)}
    g'(V_\infty)\\
    &= \exp\left\{
    \mathcal{F}(V_\infty) 
    - \log g(V_\infty) 
    - \log g'
    (V_\infty) 
    + \log (4 H(V_{\infty})) + 
    \frac{C}{g^{1-\e}
    (V_\infty)}\right\}
    g'(V_\infty)\\
    & \ge 
    4 H(V_{\infty})
    \frac{\exp\{\mathcal{F}(V_\infty)\}}
    {g(V_{\infty})}
    \exp\left\{\frac{C}{g^{1-\e}
    (V_\infty)} \right\}
    \\
    &\ge (\frac{4}{q}+o(1))\frac{1}{(-2\log r)}\exp\{\mathcal{F}(V_\infty)\} 
    \\&\ge (\frac{2}{q}+o(1))\frac{1}{e^{C_0} r^2(-\log r)}.
\end{align*}
Moreover, by Proposition \ref{critical prop}, we see that $V_{\infty}$ is unstable in $B_{r}$
for any $r>0$.
\end{proof}

\begin{lemma}
\label{unstablelem}
Let $U$ and $V$ be a regular or 
singular solution to 
\eqref{singulareq-expo}. We assume that $U$ is unstable in $B_{r_2}\setminus B_{r_1}$. Then, $V<U$ in $B_{r_2}\setminus B_{r_1}$ or $U(r)=V(r)$ for some $r\in [r_1,r_2]$. In particular, if $U$ and $V$ are \textit{unstable} in $[r_1,r_2]$, the latter case occurs.
\end{lemma}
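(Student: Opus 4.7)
The plan is to argue by contradiction using a Picone-type identity, which is the standard tool for ruling out the existence of a strictly positive supersolution of a linearized equation on an unstable domain. I would begin by supposing both alternatives fail: $U(r)\ne V(r)$ for every $r\in[r_1,r_2]$ and $V<U$ fails somewhere in $B_{r_2}\setminus B_{r_1}$. Since $U,V$ are radial and continuous, this forces $V>U$ on the whole closed annulus, so $w:=V-U$ is strictly positive and $C^2$ on $B_{r_2}\setminus B_{r_1}$.

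Next, I would subtract the two equations to get $-\Delta w=f(V)-f(U)$ and combine this with the convexity inequality
\[
f(V)-f(U)\ \ge\ f'(U)\,w,
\]
which holds as long as $f$ is convex on the range of $U$ and $V$ over the annulus. Under (G1)--(G2), Lemma \ref{lem2-2} gives convexity of $g$ (hence of $f=e^{g}$) for large arguments, which covers the regime in which this lemma is actually applied (singular or near-singular solutions in a small annulus near the origin). Thus $-\Delta w\ge f'(U)\,w$ in $B_{r_2}\setminus B_{r_1}$.

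The core step is then Picone's identity: for any $\xi\in C_0^{0,1}(B_{r_2}\setminus B_{r_1})$, the pointwise Cauchy--Schwarz estimate
\[
\nabla w\cdot\nabla(\xi^2/w)\ =\ 2\,\tfrac{\xi}{w}\nabla\xi\cdot\nabla w-\tfrac{\xi^2}{w^2}|\nabla w|^2\ \le\ |\nabla\xi|^2
\]
together with integration by parts (legitimate because $\xi$ is compactly supported inside the annulus and $w$ is bounded away from $0$ there) yields
\[
\int f'(U)\,\xi^2\,dx\ \le\ \int\frac{\xi^2}{w}(-\Delta w)\,dx\ =\ \int\nabla w\cdot\nabla(\xi^2/w)\,dx\ \le\ \int|\nabla\xi|^2\,dx.
\]
Since $\xi$ is arbitrary, $U$ is stable on $B_{r_2}\setminus B_{r_1}$, contradicting the hypothesis and completing the first part.

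For the ``in particular'' clause, if both $U$ and $V$ are unstable on $[r_1,r_2]$ but never coincide there, then the first alternative forces $V<U$ throughout the annulus; I would then rerun the Picone argument with the roles of $U$ and $V$ swapped, using $\widetilde w:=U-V>0$, to conclude $V$ is stable, a contradiction. The main obstacle I foresee is justifying the convexity bound $f(V)-f(U)\ge f'(U)\,w$ in full generality: this is automatic in the large-argument regime where the lemma will be applied, but a self-contained statement requires recording precisely how (G1)--(G2) imply $f''\ge 0$ on the relevant range, which can be done via $f''=e^{g}(g'^{2}+g'')$ together with the convexity of $g$ for large $s$ guaranteed by Lemma \ref{lem2-2}.
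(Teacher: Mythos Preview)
Your proof is correct and follows essentially the same route as the paper: assume $W:=V-U>0$ on the annulus, use convexity of $f$ to get $-\Delta W\ge f'(U)W$, and apply the Picone identity $\int f'(U)\phi^2\le\int\nabla W\cdot\nabla(\phi^2/W)\le\int|\nabla\phi|^2$ to contradict instability of $U$. You are in fact slightly more thorough than the paper, which neither spells out why negating both alternatives forces $V>U$ throughout nor writes the symmetric argument for the ``in particular'' clause; your remark that $f''=e^{g}(g'^{2}+g'')\ge 0$ for large $s$ via Lemma~\ref{lem2-2} is also a useful clarification the paper leaves implicit.
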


\begin{proof}
We assume by contradiction that $W:=V-U>0$ in $B_{r_2}\setminus B_{r_1}$. 
Then, by the convexity of $f$ (see 
Lemma \ref{lem2-2}), we have
\begin{equation*}
    -\Delta W=f(V)-f(U)\ge f'(U)W.
\end{equation*}
Let $\phi\in C^{\infty}_{c}
(B_{r_2}\setminus B_{r_1})$. 
Then, we have by the Young inequality 
that 
\begin{align*}
    \int_{B_{r_2}\setminus B_{r_1}}f'(U)\phi^2\,dx&\le   \int_{B_{r_2}\setminus B_{r_1}} \nabla W\cdot \nabla(\phi^2/W)\,dx\\
    &=\int_{B_{r_2}\setminus B_{r_1}}-\frac{|\nabla W|^2}{W^2}\phi^2+2\frac{\phi}{W}\nabla\phi\cdot\nabla W\,dx\\
    &\le \int_{B_{r_2}\setminus B_{r_1}} |\nabla \phi|^2\,dx.
\end{align*}
Thus, we deduce that $U$ is stable, which is a contradiction.
\end{proof}

\begin{proof}[Proof of Theorem \ref{thm-bi}]
We define $U_{\infty}(r)
:=y_{\infty}(\rho)$, where 
$y_{\infty}$ is the solution to 
\eqref{ODE} obtained by Proposition 
\ref{prop-es}. Let $v(r,\alpha)=u(\lambda(\alpha)^{-1/2}r,\alpha)$. Then, $v(r,\alpha)$ is a solution of \eqref{singulareq-expo} such that $v(0,\alpha)=\alpha$. Moreover,
by arguing exactly the same with that of the proof of \cite[Lemma 5]{Mi2015}, 
we deduce that 
in order to prove Theorem \ref{thm-bi}, 
it is sufficient to obtain 
the following: there exists a sequence 
$\{\alpha_m\}$ 
such that $\alpha_m\to\infty$ and $v_m(r):=v(r,\alpha_m)$ satisfies $Z_{[0,r_0]}(U_\infty-v_m)\to\infty$ as $m\to\infty$ for any fixed $r_0$, where
\[Z_{[0,r_0]}(U_\infty-v_m) := 
    \left\{
    r \in (0, r_{0}) \colon 
    (U_\infty - v_{m})(r) = 0
    \right\}. 
    \]
We remark that 
\cite[Theorem (i), (ii)]
{kuma2025} can be proved by only imposing $f\in C^1[0,\infty)$, $f>0$ in $[0,\infty)$, the convexity and 
monotonicity of $f$ for large $u$ and 
\eqref{supercritical}. 
Thus, thanks to Lemma \ref{lem2.6}, there exists $\alpha_m \to \infty$ such that $v_m=v(r,\alpha_m)\to V_{\infty}$ in $C^{2}_{\mathrm{loc}}(0,\infty)$ as $m\to\infty$ to some singular solution $V_{\infty}$~\footnote{
Here, 
we essentially use the condition 
$1 \leq q < 2$. 
Indeed,  the sequence 
$\{v_{m}\}$ does not converge 
to any singular solution in general 
if $q > 2$ (see 
\cite[Lemma 1]{MR823114} or \cite[Theorem 1.1]{MR1319012}).}. 

Now, we fix $r_0>0$ and prove that 
\begin{equation} \label{eq-y106}
Z_{[0,r_0]}(U_\infty-v_m)\to\infty
\hspace{4mm} \mbox{as $m\to\infty$}. 
\end{equation}
Thanks to Theorem \ref{thm4-1} and Remark \ref{rem4-1}, 
it is only possible for case $(i)$ or 
case $(ii)$ in Theorem \ref{thm4-1} to occur.

We first consider the case $(i)$ 
in Theorem \ref{thm4-1}
and $V_\infty \not\equiv U_\infty$. 
Then, thanks to Remark \ref{rem4-1} and Proposition \ref{critical prop},
we can choose $I_{n}=[r_{i(n)+1},r_{i(n)}]$ with $i(n)\to\infty$
such that $V_\infty(r_n)>U_\infty(r_n)$ for some $r_n\in I_n$ \footnote{Indeed, if 
$V_\infty(r_n)=U_\infty(r_n)$, 
we can deduce that 
$V_\infty'(r_n)\neq U_\infty'(r_n)$ 
or $V_\infty=U_\infty$ 
by the classical ODE argument.} and \eqref{reversecritical} is satisfied for some
$\varphi_n:=\varphi_{i_n}\in C^{0,1}_{0}(I_n)$. Let $M>0$. Then, there exists some $m_0$ such that 
$v_m(r_n)>U_\infty(r_n)$ for all $n\le M$ and $m>m_0$. Moreover, 
observe from Lemma \ref{unstable-singular} and Remark \ref{rem5-1} that
\begin{align*}
    \int_{I_n}f'(U_\infty)
    \varphi_n^2 r\,dr
     \ge \frac{1}{qe^{C_0}}\int_{I_n}\frac{1}{r^2(-\log r)}\varphi_n^2 r\,dr > \int_{I_n}|\nabla \varphi_n|^2 r\,dr.
\end{align*}
This implies that 
$U_\infty$ is unstable for all $I_n$. Thus, by Lemma \ref{unstablelem} and the fact that
$v_m(r_n)>U_\infty(r_n)$ for all $n\le M$, we deduce that $U_\infty$ intersects with $v_m$ in $I_n$ for all $m>m_0$ and $n\le M$. Since $M$ is arbitrary, we get \eqref{eq-y106}

Next, we consider the case 
$U_\infty \equiv V_\infty$ or 
case $(ii)$ in Theorem \ref{thm4-1}
occurs. In this case, we choose $I_n=[r_{n+1}, r_n]$ and $\varphi_n\in C^{0,1}_{0}(I_n)$ such that \eqref{reversecritical} is satisfied. Therefore, 
in this case, we have 
by Lemma \ref{unstable-singular} 
that 
\begin{align*}
    \int_{I_n}f'(V_\infty) 
    \ge \frac{1}{qe^{C_0}}\int_{I_n}\frac{1}{r^2(-\log r)}\varphi_n^2 r\,dr > \int_{I_n}|\nabla \varphi_n|^2 r\,dr.
\end{align*}
Let $M>0$. Then, thanks to the above discussion,  
we can deduce that $v_m$ is unstable in $I_n$ for all $n\le M$ if $m$ is sufficiently large. Thus, 
since for any $n \leq M$, 
both of the functions $U_{\infty}$ and 
$v_{m}$ are unstable in $I_{n}$, \eqref{eq-y106} follows from Lemma \ref{unstablelem}.
\end{proof}
\appendix

\section{The examples of nonlinearities} \label{ex-non}
Let $f_1$ and $f_2$ be those of \eqref{ex1-1}. Then, by a direct  computation, we see that the following examples of $g$ satisfy (G1), (G2) and \eqref{addition}. Moreover, $q$ and $H$ are computed as follows 
\begin{enumerate}
\item[\textrm{(i)}] $H(s)=\frac{1}{B}+\frac{-B'(B'-1)(2B'-1)\log s +\log (4/BB')+(2B'-1)^2-(2B'-1)^2 s^{-B'}\log s +[(2B'-1)\log (4/BB')-\frac{1}{B}(2B'-1)^2]s^{-B'}}{s^{B'}(B'-(2B'-1)s^{-B'})^2}$\\
\hspace{0.73cm}
$=\frac{1}{B}+O(\frac{\log s}{s^{B'}})$
and $q=B$ \hspace{2mm}
    if $g(s)=\log f_{1}=s^{B'}-(2B'-1)\log s+\log (4/BB')$;
\item[\textrm{(ii)}] $H(s)=1+\frac{i}{(1+i) s^{(1+i)}} 
= 1 + O(s^{-(1 + i)})$ 
    and $q = 1$
    \hspace{2mm}
    if $g(s) =e^{s^{i+1}}$ with $i>-1$;
    \item[\textrm{(iii)}] $H(s)=\frac{p-1}{p}+\frac{(p-2r)(p-1)+r(r-1)+r(r-1-\frac{(p-1)r}{p})s^{r-p}}{s^{p-r}(p+rs^{r-p})^2}$ and $q=\frac{p}{p-1}$ \\
    \hspace{0.73cm}
    $= \frac{p-1}{p} + O(s^{-(p-r)})$
    \hspace{2mm} if $g=s^{p}+s^{r}$ with $p>r>0$ and $p>1$; 
    \item[\textrm{(iv)}] $H(s)=\frac{p-1}{p}+\frac{i+(i(i-1)-\frac{p-1}{p}i^2)(\log s)^{-1}}{\log s
    (p+i(\log s)^{-1})^2} 
    = \frac{p-1}{p}+ O((\log s)^{-1})$
    and $q = \frac{p}{p-1}$
    \hspace{2mm}
    if 
    $g=s^p(\log s)^i$;
\item[\textrm{(v)}] $H(s)=1+\frac{(-2s+4+\log 4)-4e^{-s}}{e^s(1-2e^{-s})^2} 
= 1 + O(\frac{s}{e^{s}})$ 
and $q=1$ \hspace{2mm} if $g=\log f_2=e^{s}-2s+\log 4$;
    \item[\textrm{(vi)}] $H(s) 
    =1+\frac{1}{e^{s}}$
    and $q = 1$
    \hspace{2mm}
    if $g(s)=e^{e^s}$;  
    \item[\textrm{(vii)}] $H(s) =
    1 + \frac{1}{e^{e^{s}}} 
    + \frac{1}{e^{s} e^{e^{s}}}$ 
    and $q = 1$
    \hspace{2mm}
    if $g(s)=e^{e^{e^s}}$. 
\end{enumerate}
Moreover, by the direct computation, we see that $g$ satisfies \eqref{equiv-apendix} 
below for the cases (i) and (v); while $g$ does not satisfy \eqref{equiv-apendix}  
for the cases (vi) and (vii). For the cases (ii) and (iv), $g$ satisfies \eqref{equiv-apendix} if and only if $i=0$. For the case (iii), $g$ satisfies \eqref{equiv-apendix} if and only if $p>2r$.

\section{Relation between the asymptotic behavior
$g(\Tilde{u})$ and $g(U_{\infty})$}
Here, we study the relations 
between 
the asymptotic behavior
$g(\tilde{u})$ and 
$g(U_{\infty})$. 
Here, $\tilde{u}$ is given 
by \eqref{ex1-3}. 
We also study the relationship between the conditions 
(F1)--(F3) 
assumed in \cite{MR4876902} 
and our conditions 
(G1) and (G2). 

We first quote the following
\begin{lemma}
\label{hennalem}
Assume that $g\in C^3(u_0,\infty)$ satisfies (G1) and (G2).
Then, $f(s)=e^{g(s)}$ satisfies
\begin{equation} \label{asy-eq2}
     F(s)=\frac{1}{f(s)g'(s)}(1-(1+o(1))\frac{H(s)}{g(s)})\quad\text{as $s\to\infty$}, 
\end{equation}
where $F(s)$ is defined by 
\eqref{asf1}. 
In particular, we have $f'(s)F(s) 
\to 1$ as $s\to\infty$. 
Moreover, if $g$ satisfies the following additional condition
\begin{equation}
\label{addition}
\frac{g^{k}(s)|H^{(k)}(s)|}
{g'^{k}(s)}=o(1)\quad\text{as $s\to\infty$ for $k=1,2$,}
\end{equation} 
we obtain 
\begin{equation} \label{asy-eq3}
    F(s)=\frac{1}{f(s)g'(s)}
     \left[
     1-\frac{H(s)}{g(s)}- 
   \left(\frac{H'(s)}{g(s)g'(s)} 
    -\frac{H(s)}{g^2(s)} - 
    \frac{H^2(s)}{g^2(s)}
    \right) + 
    O(g^{-3}(s))
    \right] \quad\text{as $s\to\infty$}.
\end{equation}
\end{lemma}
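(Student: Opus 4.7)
The plan is to derive both asymptotic expansions by iterated integration by parts on
$$F(s) = \int_s^\infty e^{-g(t)}\, dt,$$
exploiting the identity $e^{-g(t)} = -(1/g'(t)) \frac{d}{dt} e^{-g(t)}$ together with the defining relation $g''/(g')^2 = H/g$. By Lemma \ref{lem2-2}, $g'(t) > 0$ for large $t$ and $g(t) \to \infty$, so every boundary term at infinity vanishes at each stage.

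For \eqref{asy-eq2}, I would integrate by parts once to obtain
$$F(s) = \frac{e^{-g(s)}}{g'(s)} - \int_s^\infty e^{-g(t)} \frac{H(t)}{g(t)}\,dt,$$
and then integrate by parts on the remaining integral, producing the leading correction $e^{-g(s)} H(s)/(g'(s) g(s))$ and a new integral with integrand
$$\Bigl(\frac{H}{g' g}\Bigr)' \;=\; \frac{H'}{g' g} - \frac{H^2}{g^2} - \frac{H}{g^2}.$$
Hypothesis (G2) yields $|H'/(g' g)| \leq C/g^2$, while (G1) makes $H$ eventually bounded (indeed $H \to 1/q \in (0,1]$), so the remaining two summands are $O(g^{-2})$ as well. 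A third integration by parts shows the remainder integral is $O(e^{-g(s)}/(g'(s) g^2(s)))$. Collecting terms,
$$F(s) = \frac{1}{f(s) g'(s)}\Bigl[1 - \frac{H(s)}{g(s)} + O(g^{-2}(s))\Bigr].$$
Since $H$ is bounded below, $O(g^{-2}) = o(H/g)$, which absorbs the error into the second term and yields exactly \eqref{asy-eq2}. The statement $f'(s) F(s) \to 1$ then follows at once from $f' = g' f$ and $H/g \to 0$.

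For the refined expansion \eqref{asy-eq3} I would perform one additional integration by parts. Setting $B := (H/(g'g))' = H'/(g' g) - H^2/g^2 - H/g^2$ and integrating by parts once more extracts the contribution $e^{-g(s)} B(s)/g'(s)$, and $-B(s)$ is precisely the bracketed correction $-(H'/(g g') - H/g^2 - H^2/g^2)$ in the target formula. The new error involves $(B/g')'$; a short calculation using the strengthened hypothesis \eqref{addition} (combined with (G2) applied through third order) gives $(B/g')' = O(g^{-3})$, and a further integration by parts then produces the remainder $O(g^{-3}(s))$ stated in \eqref{asy-eq3}.

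Conceptually the argument is straightforward: each integration by parts improves the order by a factor of $g^{-1}$. The only real work, and the main place where care is needed, is in the bookkeeping of the successive derivatives $(H/(g'g))'$ and $(B/g')'$. One must apply (G2) to bound $|H^{(k)}| \leq C(g'/g)^k$ with the right exponents, and for the second expansion use \eqref{addition} to control $H'', H'''$ with the required decay. This derivative bookkeeping is the main — though essentially computational — obstacle.
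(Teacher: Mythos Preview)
Your approach is correct and essentially coincides with the paper's: both proceed by iterated integration by parts on $F(s)=\int_s^\infty e^{-g}$, using $g''/g'^2=H/g$ to identify each successive correction, and arrive at exactly the same intermediate identity
\[
F(s)=\frac{e^{-g(s)}}{g'(s)}\Bigl(1-\frac{H(s)}{g(s)}\Bigr)-\int_s^\infty e^{-g(t)}\Bigl(\frac{H}{gg'}\Bigr)'(t)\,dt,
\]
after which both compute $(H/(gg'))'=H'/(gg')-H/g^2-H^2/g^2$ and bound it via (G2). Two small remarks: first, \eqref{addition} concerns $H'$ and $H''$ (the cases $k=1,2$), not $H'''$; the bound on $H'''$ comes from (G2) with $k=3$. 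Second, your last sentence ``a further integration by parts then produces the remainder $O(g^{-3})$'' still leaves a tail integral $\int_s^\infty e^{-g}(\varphi/g')'$ with $\varphi=(B/g')'$; to close, either observe (as the paper does) that under \eqref{addition} one has $\varphi=g^{-3}(H^3+3H^2+2H+o(1))$ so that $(\varphi/g')'=o(\varphi)$, or more directly use $g'$ increasing (Lemma~\ref{lem2-2}) to get $F(s)\le e^{-g(s)}/g'(s)$ and hence $\int_s^\infty e^{-g}O(g^{-4})\le g^{-4}(s)F(s)$. Either route completes the argument.
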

\begin{proof}
First, we will prove \eqref{asy-eq2}. 
By  the assumptions (G1), (G2) 
and \eqref{eq-y52-2}, 
we have 
\begin{equation*}
    \frac{1}{g(s)}\log g'(s)
    \to 0, \hspace{4mm} 
    \frac{H(s)}{g(s)}\to 0\hspace{4mm}\text{and} \hspace{4mm} 
    \frac{g(s)}{H(s)g'(s)}(\frac{H(s)}{g(s)})'\to 0 
    \quad\text{as $s\to\infty$}
    .
\end{equation*}
Moreover, Lemma \ref{lem2-2} implies 
that $\lim_{s \to \infty} g(s) 
= \infty$ and $g'(s)>0$ 
for sufficiently large $s > 0$.
Thus, it follows from the proof of \cite[Lemma 2.2] {kumagai2025classificationbifurcationstructuresemilinear} 
~\footnote{
We apply 
$b(s)$ and $c(s)$ defined there 
as $b(s) = g(s)$ and $c(s) = H(s)/g(s)$.
}
that
\begin{equation} \label{asy-eq1}
 \begin{split}
    F(s)
    & =e^{-g(s)}g'^{-1}(s)
    (1 - \frac{H(s)}{g(s)})-\int_{s}^{\infty}e^{-g(t)}(\frac{H(t)}{g(t)g'(t)})'\,dt.
\end{split}
\end{equation}
Since 
\begin{equation*}
(\frac{H(t)}{g(t)g'(t)})'=  \frac{H'(s)}{g(s)g'(s)} -
    \frac{H(s)}{g^2(s)} - 
    \frac{H^2(s)}{g^2(s)}=O(g^{-2})=o(H/g),
\end{equation*}
we get \eqref{asy-eq2}. 

Next, we shall show \eqref{asy-eq3}. 
If \eqref{addition} is additionally 
satisfied, we have by 
(G1) that 
\begin{equation*}
    \frac{H'(s)}{g(s)g'(s)} -
    \frac{H(s)}{g^2(s)} - 
    \frac{H^2(s)}{g^2(s)} 
    =-(\frac{q+1}{q^2}+o(1))
    g^{-2}(s) 
    \quad \mbox{as $s \to \infty$}.
\end{equation*} 
Thus, by \eqref{asy-eq1} and 
integrating by parts, 
we obtain that 
\begin{align*}
    F(s)
    & = e^{-g(s)}g'^{-1}(s)
    (1 - \frac{H(s)}{g(s)})
    + 
    \int_{s}^{\infty}
    (e^{-g(t)})' g'^{-1}(t)
    \left(
    \frac{H'(t)}{g(t)g'(t)} -
    \frac{H(t)}{g^2(t)} - 
    \frac{H^2(t)}{g^2(t)} \right)
    \,dt \\
    & = 
    e^{-g(s)}g'^{-1}(s)
    (1-\frac{H(s)}{g(s)} - 
    \frac{H'(s)}{g(s)g'(s)}+\frac{H(s)}{g^2(s)}+\frac{H^2(s)}{g^2(s)}) 
    -
    \int_{s}^{\infty}e^{-g(t)}
    \varphi(t)\,dt
    \\
    &=e^{-g(s)}g'^{-1}(s)
    (1-\frac{H(s)}{g(s)} - 
    \frac{H'(s)}{g(s)g'(s)} + 
    \frac{H(s)}{g^2(s)} + 
    \frac{H^2(s)}{g^2(s)}-\varphi(s)) 
    -\int_{s}^{\infty}e^{-g(s)}(\varphi(t) g'^{-1}(t))'\,dt,
\end{align*}
where 
    \[
    \varphi(s)
   := \left(g'^{-1}(s)
    \left(
    \frac{H'(s)}{g(s) g'(s)} -
    \frac{H(s)}{g^2(s)} - 
    \frac{H^2(s)}{g^2(s)} \right)
    \right)'\\
    =(\frac{H'(s)}{g(s)g'^2(s)} - 
    \frac{H(s)}{g'(s)g^2(s)} - 
    \frac{H^2(s)}{g'(s)g^2(s)})'. 
    \]
Note that 
    \[
    \left(\frac{1}{g'(s)}\right)' 
    = - \frac{H(s)}{g(s)}, \qquad 
    \frac{1}{g(s)} 
     \left(\frac{1}{g^{2}(s)}\right)' 
     = - \frac{2}{g^{3}(s)}. 
    \]
Using this, 
with a little bit of effort, we find that  
\begin{align*}
     \varphi(s) 
     &=\frac{H''(s)}{g(s)g'^2(s)} 
   -\frac{H'(s)}{g^2(s)g'(s)} - 
   \frac{2H'(s)g''(s)}{g(s)g'^{3}(s)} 
   -\frac{H'(s)+2H(s)H'(s)}{
   g'(s) g^2(s)}+\frac{(H(s)+H^2(s))g''(s)}{g'^2(s)g^2(s)}+\frac{2(H(s)+H^2(s))}
   {g^3(s)}\\
    &=\frac{H''(s)}{g(s)g'^2(s)} 
    -\frac{H'(s)}{g^2(s)g'(s)} 
    -\frac{2H'(s)H(s)}{g^2(s)g'(s)} 
    -\frac{H'(s)+2H(s)H'(s)}{g'(s) g^2(s)}+\frac{H(s)(H(s)+H^2(s))}{g^3(s)}+\frac{2(H(s)+H^2(s))}{g^3(s)}\\
&=\frac{1}{g(s)g'^2(s)}H''(s) - 
\frac{2H'(s)}{g^2(s)g'(s)}(1+2H(s)) + 
\frac{1}{g^3(s)}(H^3(s)+3H^2(s)+2H(s))\\
    &=g^{-3}(s)(H^3(s) +3H^2(s) +2H(s) +o(1)). 
\end{align*}
We see from the condition (G2) that
\begin{align*}
(\varphi g'^{-1})'&=\frac{1}{gg'^3}H'''-\frac{1}{g^2g'^2}(2H''+4H'^2+4HH'')+\frac{H'}{g^3g'}(3H^2+6H+2)\\
&\qquad-H''(\frac{1}{g^2g'^2}+\frac{3g''}{gg'^4})+4(\frac{1}{g^3g'}+\frac{g''}{g'^3 g^2})H'(1+2H)-
(H^3+3H^2+2H)(\frac{3}{g^4}+\frac{g''}{g^3g'^2})\\
&=O(g^{-4})=o(\varphi).
\end{align*}
Therefore, we can deduce that
\begin{equation*}
     F(s)=e^{-g}g'^{-1}(1-\frac{H}{g}-(\frac{H'}{gg'}-\frac{H}{g^2}-\frac{H^2}{g^2})+O(g^{-3}))\quad\text{as $s\to\infty$.}
\end{equation*}
Thus, \eqref{asy-eq3} holds. 
\end{proof}
From the above lemma, we can get the following
\begin{lemma}
\label{bnolem}
We assume that $g$ satisfies $(G1)$, $(G2)$ and \eqref{addition}. Then, $f$ satisfies (F1) and (F2) 
(see Section \ref{intro}). In particular, we have $B=q$, where 
$B = \lim_{s \to \infty}B_{2}[f]$.
Moreover, $f$ satisfies (F3) if and only if 
\begin{equation}
\label{equiv-apendix}
g^{1/2}(s)(H(s)-\frac{1}{B})\to 0 
\quad\text{and}\quad g^{1/2}
(s)\frac{H'g}{g'}(s)\to 0 
\hspace{4mm} \mbox{as $s \to 
\infty$}.
\end{equation}
\end{lemma}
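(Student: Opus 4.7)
The plan is to substitute the asymptotic expansion of $F$ from Lemma \ref{hennalem} directly into the definitions of $B_1[f]$ and $B_2[f]$, verify (F1) and (F2) together with the identity $B = q$ in one stroke, and then match the resulting expansion at $\tilde u(r)$ against the analogous one for $\phi = f_1$ at $v(r)$ to analyze (F3).

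For (F1) and the first limit of (F2), I observe that $f = e^g > 0$, $g \in C^5(s_0,\infty)$ by (G2), $f' = g' f > 0$ for large $s$ by Lemma \ref{lem2-2}, and Lemma \ref{hennalem} under \eqref{addition} gives $f'(s) F(s) = 1 - H(s)/g(s) + O(g^{-2}(s)) \to 1$. Using $f''/f' = g'(1 + H/g)$, one finds $ff'' F/f' = (1 + H/g)\, f' F$, and a short expansion yields
$$\frac{f(s) f''(s) F(s)}{f'(s)} - 1 = \frac{1}{g^2(s)}\left(H(s) - \frac{g(s) H'(s)}{g'(s)}\right) + O(g^{-3}(s)).$$
Combined with $-\log F(s) = g(s) + \log g'(s) + H(s)/g(s) + O(g^{-2}(s))$, this gives $1/B_2[f](s) = H(s) + o(1) \to 1/q$, so $B = q$. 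A parallel computation gives $1/B_1[f](s) = H(s) + o(1)$ as well.

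For (F3), I focus on $q > 1$; the case $q = 1$ is parallel, with $f_2$ and $u_{2,\infty}$ replacing $f_1$ and $u_{1,\infty}$. Set $\hat g := \log \phi = \log f_1$. A direct check (Appendix \ref{ex-non}, case (i)) shows that $\hat g$ satisfies (G1), (G2), and \eqref{addition}, and moreover $\hat H(s) - 1/B = O(\log s/s^{B'})$ together with $\hat g \hat H'/\hat g' = O(s^{-B'})$. Applying the computation above to $\hat g$ at $s = v(r) = (-2\log r)^{1/B'}$ produces $1/B_i[\phi](v(r)) = 1/B + o((-\log r)^{-1/2})$ for $i = 1, 2$. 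From $F(\tilde u) = \Phi(v)$ and the explicit form $-\log \Phi(v) = -2\log r + O(\log(-\log r))$, together with $-\log F(s) = g(s) + O(\log g(s))$, one deduces $g(\tilde u) = -2\log r + O(\log(-\log r))$, whence $\tilde u(r) \to \infty$ and $g(\tilde u) \sim -2\log r$. Running the expansion of $1/B_i[f]$ to the $o(g^{-1/2})$ precision at $s = \tilde u$ yields
$$\frac{1}{B_1[f](\tilde u)} = H(\tilde u) + o\!\left(g(\tilde u)^{-1/2}\right), \qquad \frac{1}{B_2[f](\tilde u)} = H(\tilde u) - \frac{gH'}{g'}(\tilde u) + o\!\left(g(\tilde u)^{-1/2}\right).$$
Therefore $(-\log r)^{1/2} R_1(x) \to 0$ is equivalent to $g^{1/2}(H - 1/B)(\tilde u) \to 0$, while $(-\log r)^{1/2} R_2(x) \to 0$ is equivalent to $g^{1/2}(H - 1/B - gH'/g')(\tilde u) \to 0$; the conjunction of the two is exactly \eqref{equiv-apendix}, establishing the claimed equivalence.

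The main obstacle lies in producing the two expansions of $1/B_i[f](s)$ accurate to $o(g(s)^{-1/2})$, and in particular isolating $-gH'/g'$ as the unique subleading survivor in $1/B_2[f]$ beyond $H$. This requires expanding each of $1 - f'F$, $-\log F$, and $ff''F/f' - 1$ to order $O(g^{-2})$ via \eqref{asy-eq3}, multiplying through, tracking the cancellations of the various $H/g$-level terms, and verifying that all $H''$-contributions are absorbed into the error thanks to the $k = 2$ case of \eqref{addition}.
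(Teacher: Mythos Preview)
Your proposal is correct and follows essentially the same approach as the paper's own proof: both expand $f'F$, $-\log F$, and $ff''F/f'-1$ via Lemma~\ref{hennalem} to obtain $1/B_1[f](s)=H(s)+O(\log g/g)$ and $1/B_2[f](s)=H(s)-gH'(s)/g'(s)+O(\log g/g)$, verify the same expansions for the model nonlinearity $\phi=f_j$ to show $1/B_i[\phi](v)=1/B+o((-\log r)^{-1/2})$, establish $g(\tilde u)\sim -2\log r$, and then read off the equivalence between (F3) and \eqref{equiv-apendix} by triangle inequality. The only point you leave implicit that the paper states explicitly is that $\tilde u(r)$ is monotone in $r$, so that the limit along $s=\tilde u(r)$ as $r\to 0$ is equivalent to the limit as $s\to\infty$; this is immediate from the definition \eqref{ex1-3}.
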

\begin{proof}
We first remark that (F1) follows from Lemma \ref{hennalem} and Lemma \ref{lem2-2}. In addition, Since $f'(s)=f(s) g'(s)$ and $f''(s)=f(s)^{-1}f'(s)^2+f(s)g''(s)$ are satisfied, it follows from the direct computation that
    \begin{align*}
    & f'(s) F(s) 
    = 1-\frac{H(s)}{g(s)}- 
   \left(\frac{H'(s)}{g(s)g'(s)} 
    -\frac{H(s)}{g^2(s)} - 
    \frac{H^2(s)}{g^2(s)}
    \right) + 
    O(g^{-3}(s)), \\
    & \frac{f(s)f''(s)}
    {f'(s)^2}=1+\frac{g''(s)}
    {g'(s)^2}= 1+\frac{H(s)}
    {g(s)}.
    \end{align*}
Moreover, by Lemma \ref{hennalem}, we have 
\begin{equation}\label{hyouji-b1}
\begin{split}
  (B_{1}[f](s))^{-1}
  &=(-\log F)(f'(s)F(s)-1) \\
  & =(g(s)+\log g'(s)+o(1))(\frac{H(s)}{g(s)}+\frac{H'(s)}{g(s)g'(s)} 
  -\frac{H(s)}{g^2(s)}-\frac{H^2(s)}{g^2(s)}+O(g^{-3}(s)))\\
  &=H(s)+O(\frac{\log g(s)}{g(s)}) \qquad\text{(by Lemma \ref{lem2-1})}
\end{split}
\end{equation}
and
\begin{equation}\label{hyouji-b2}
\begin{split}
(B_{2}[f](s))^{-1}=&f'(s)F(s) (-
\log F(s))^2 
(\frac{f(s)f''(s)F(s)}
{f'(s)}-1)\\
&\qquad=(g(s)+\log 
g'(s)+o(1))^2[(1+\frac{H(s)}
{g(s)})(1-\frac{H(s)}{g(s)} 
-\frac{H'(s)}{g(s)g'(s)} 
+ (\frac{H(s)}{g^2(s)} + 
\frac{H^2(s)}{g^2(s)})+ 
O(g^{-3}(s)))-1]\\
&\qquad=(1+O(\log g(s)/g(s)))
(H(s)-\frac{H'(s)g(s)}{g'(s)}+O(g^{-1}(s)))\\
&\qquad=H(s) -\frac{H'(s)g(s)}{g'(s)}+O(\log g(s)/g(s)).
\end{split}
\end{equation}
Therefore, the limit $B$ exists and $B=q$.
Moreover, we observe from \eqref{asy-eq3} and 
\eqref{ex1-3} 
that 
    \[
    g(\Tilde{u}) = -(1+o(1))\log F(\Tilde{u})=-(2+o(1))\log r
    \]
    for any $f$ such that $g=\log f$ satisfies (G1) and (G2). In particular, since $g_j:=\log f_{j}$ satisfies (G1), (G2) and \eqref{addition} (see Subsection 2.1), we have $g_{j}(u_{j,\infty})=-(2+o(1))\log r$ for $j=1,2$, where $f_1$, $f_2$, $u_{1,\infty}$ and $u_{2,\infty}$ are those in \eqref{ex1-1} and \eqref{ex1-2}. Therefore, since $g_j$ satisfies \eqref{equiv-apendix} (see Subsection 2.1) and $B=q$, we have
\begin{equation} \label{equiv-apendix3}
\begin{split}
(-\log r)^{1/2}&\sum_{i=1}^{2}|(B_i[f_j](u_{j,\infty}))^{-1}-\frac{1}{B}|\\
&\le Cg_{j}^{1/2}(u_{j,\infty})\left(|\frac{1}{q}-H(u_{j,\infty})|+|\frac{1}{q}-H(u_{j,\infty})+\frac{H'g_{j}}{g_{j}'}(u_{j,\infty})|\right)+o(\frac{\log g_{j}}{g^{1/2}_{j}}(u_{j,\infty}))\\
&\to 0  \qquad\text{as $r\to 0$ for any $j=1,2$.}
\end{split}
\end{equation}

We now prove \eqref{equiv-apendix}
implies (F3). 
Indeed, by \eqref{hyouji-b1} and 
\eqref{hyouji-b2}, we obtain 
    \[
    \begin{split}
    (- \log r)^{1/2}
    & \sum_{i = 1}^{2}
    \biggl|
    (B_{i}[f_{j}](u_{j, \infty}))^{-1} 
    - (B_{i}[f](\tilde{u}))^{-1}
    \biggl| \\
    & \leq 
     (- \log r)^{1/2}
    \sum_{i = 1}^{2}
    \biggl|
    (B_{i}[f_{j}](u_{j, \infty}))^{-1} 
    - \frac{1}{B}
    \biggl| +  (- \log r)^{1/2}
    \sum_{i = 1}^{2}
    \biggl|
   H(\tilde{u}) 
    - \frac{1}{B}
    \biggl| \\ 
    & \qquad + (- \log r)^{1/2}
    \sum_{i = 1}^{2}
    \biggl|
   H(\tilde{u}) 
    -  (B_{i}[f](\tilde{u}))^{-1}
    \biggl| \\
    & \leq 
     (- \log r)^{1/2}
    \sum_{i = 1}^{2}
    \biggl|
    (B_{i}[f_{j}](u_{j, \infty}))^{-1} 
    - \frac{1}{B}
    \biggl| +  C
    \biggl|
    g^{1/2}(\tilde{u})\left(
   H(\tilde{u}) - \frac{1}{B}\right)
    \biggl| \\ 
    & \qquad + 
    C (\log g(\tilde{u})/g^{1/2}(\tilde{u})) 
    + C g^{1/2}(\tilde{u}) 
    \biggl|\frac{H'(\tilde{u}) g(\tilde{u})}
    {g'(\tilde{u})}\biggl|. 
    \end{split}
    \]
From \eqref{equiv-apendix} and 
\eqref{equiv-apendix3}, we find that (F3) 
holds.

Next, we will show that if $f$ satisfies 
(F3), then \eqref{equiv-apendix} holds. 
Since $\tilde{u}$ is monotone decreasing 
in $r$, it suffices to show that  
\begin{equation*}
%\label{equiv-apendix2}
g^{1/2}(\tilde{u})(H(\tilde{u}) - 
\frac{1}{B})\to 0 
\quad\text{and}\quad g^{1/2}
(\tilde{u})\frac{H'g}{g'}(\tilde{u})\to 0 
\hspace{4mm} \mbox{as $r \to 0$}.
\end{equation*}
Since $B = q$ and 
$g(\tilde{u}) = - (2 + o(1)) \log r$, 
we have by \eqref{hyouji-b1} that 
    \begin{equation}\label{equiv-apendix4}
    \begin{split}
    g^{1/2}(\tilde{u}) 
    \biggl|(H(\tilde{u}) - 
\frac{1}{B}) \biggl|
    & \leq 3(- \log r)^{\frac{1}{2}}
       \biggl|
       (B_{1}[f](\tilde{u}))^{-1} - 
       O(\frac{\log g(\tilde{u})}
       {g(\tilde{u})})
       - \frac{1}{B})\biggl| \\
     &  \leq 3(- \log r)^{\frac{1}{2}}
       \biggl|
       (B_{1}[f](\tilde{u}))^{-1} 
       - (B_1[f_j](u_{j,\infty}))^{-1}
       \biggl| \\
     & \quad + 3(- \log r)^{\frac{1}{2}}
       \biggl|B_1[f_j](u_{j,\infty}))^{-1} 
       - \frac{1}{B}\biggl|
       + O(\frac{\log g(\tilde{u})}
       {g^{1/2}(\tilde{u})})
    \end{split}
    \end{equation}
This together with \eqref{equiv-apendix3} 
and (F3), we see that 
$\lim_{r \to 0}g^{1/2}(\tilde{u})(H(\tilde{u}) - \frac{1}{B}) = 0$. 
In addition, it follows from 
\eqref{hyouji-b2} that 
    \[
    \begin{split}
    g^{1/2}(\tilde{u})
    \biggl|\frac{H'g}{g'}(\tilde{u})\biggl|
& \leq g^{1/2}(\tilde{u})
\biggl|(B_{2}[f](\tilde{u}))^{-1} 
- H(\tilde{u}) + O(\log g(\tilde{u})
/g(\tilde{u}))
\biggl| \\
&  \leq 3 (- \log r)^{1/2}
\biggl|(B_{2}[f](\tilde{u}))^{-1} 
- (B_{2}[f_{j}](u_{j, \infty}))^{-1}
\biggl| \\
& \quad + 3(- \log r)^{1/2}
\biggl|(B_{2}[f_{j}](u_{j, \infty}))^{-1}
- \frac{1}{B}\biggl|
+ g^{1/2}(\tilde{u})
\biggl|H(\tilde{u}) - \frac{1}{B}\biggl|
+ O(\log g(\tilde{u})
/g^{1/2}(\tilde{u})). 
    \end{split}
    \]
Thus, we see from \eqref{equiv-apendix3}, 
\eqref{equiv-apendix4} and (F3) that 
$\lim_{r \to 0} g^{1/2}
(\tilde{u})\frac{H'g}{g'}(\tilde{u}) = 0$.

Therefore, we see that the condition (F3) is equivalent to \eqref{equiv-apendix}.
\end{proof}
We now prove the following
\begin{lemma}
\label{apenlem}
We assume that $g$ satisfies (G1), (G2) and \eqref{addition}. Then, $\Tilde{u}$ given in \eqref{ex1-3} with $B=1/q$ satisfies 
\begin{equation} \label{asy-eq8}
g(\Tilde{u})=-2\log r-2\log(-2\log r)+\log[\frac{g}{g'}(g^{-1}(-2\log r))]+\log 4-\log q+o((-\log r)^{-1+\e})\qquad\text{as $r\to 0$}
\end{equation}
for any $\e>0$. Moreover, if additionally (F3) is satisfied, 
the singular solution $u$ provided by \cite{MR4876902} (see \eqref{eq1-FI}) satisfies
\begin{equation}
\label{apeneq-1}
g(u)=g(\Tilde{u})+O(R)=g(\Tilde{u})+o((-\log r)^{-1/2})\qquad\text{as $r\to 0$},
\end{equation}
where $R$ is 
the function given in \eqref{eq1-FI}. In particular, $u$ corresponds with $U_{\infty}$ in Theorem \ref{thm-sing}.

\end{lemma}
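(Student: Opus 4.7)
The plan is to prove the two assertions in sequence. For the first, I would invert the defining relation $F(\tilde{u}) = (q/4) r^{2}(-2\log r + 1)$ (using $B = q$, which is Lemma \ref{bnolem}) by means of the refined expansion \eqref{asy-eq3} in Lemma \ref{hennalem}. Taking logarithms on the right-hand side gives
\[
-\log F(\tilde{u}) = -2\log r - \log(-2\log r) + \log(4/q) + O((-\log r)^{-1}),
\]
while \eqref{asy-eq3} yields
\[
-\log F(s) = g(s) + \log g'(s) + \frac{H(s)}{g(s)} + O(g^{-2}(s)).
\]
Using $\log g'(s) = \log g(s) - \log[(g/g')(s)]$ and equating, one obtains
\[
g(\tilde{u}) + \log g(\tilde{u}) = -2\log r - \log(-2\log r) + \log[(g/g')(\tilde{u})] + \log 4 - \log q + O((-\log r)^{-1}).
\]
The leading balance gives $g(\tilde{u}) = -2\log r + O(\log\log r)$, and a bootstrap substitution yields $\log g(\tilde{u}) = \log(-2\log r) + O(\log\log r / \log r)$. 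To replace $(g/g')(\tilde{u})$ by $(g/g')(g^{-1}(-2\log r))$, I would apply the mean value theorem together with $(g/g')'(s) \to 1 - 1/q$ (cf.\ \eqref{eq-y35-1}) and the estimate of $\tilde{u} - g^{-1}(-2\log r)$ coming from the bootstrap; the resulting error is $O(\log\log r/\log r)$, which is $o((-\log r)^{-1+\varepsilon})$ for any $\varepsilon > 0$. This yields \eqref{asy-eq8}.

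For the second assertion, the starting point is $u(x) = \tilde{u}(x) + O(f(\tilde{u}) F(\tilde{u}) R(x))$ from \eqref{eq1-FI}. By Lemma \ref{hennalem}, $f(s) F(s) = g'(s)^{-1}(1 + o(1))$, so $u - \tilde{u} = O(R / g'(\tilde{u}))$. The mean value theorem gives $g(u) - g(\tilde{u}) = g'(\xi)(u - \tilde{u})$ for some $\xi$ between $u$ and $\tilde{u}$; since the smallness of $u - \tilde{u}$ forces $g'(\xi)/g'(\tilde{u}) = 1 + o(1)$ (the same kind of Taylor argument used to obtain \eqref{eq-imp-3}), we conclude $g(u) = g(\tilde{u}) + O(R)$. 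Condition (F3) directly states $R(x) = o((-\log|x|)^{-1/2})$, which yields \eqref{apeneq-1}.

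To identify $u$ with the solution $U_{\infty}$ from Theorem \ref{thm-sing}, I would verify that $g(u)$ satisfies the expansion \eqref{eq-asy} with $\varepsilon = 1/2$. By Lemma \ref{bnolem}, assumption (F3) is equivalent to \eqref{equiv-apendix}, so in particular $g^{1/2}(\tilde{u}) (H(\tilde{u}) - 1/q) = o(1)$. Combined with $g(\tilde{u}) \sim -2\log r$, this gives $\log(qH(g^{-1}(-2\log r))) = o((-\log r)^{-1/2})$, where the substitution of $g^{-1}(-2\log r)$ for $\tilde{u}$ incurs only a negligible error by another mean value estimate controlled via (G2). Together with \eqref{asy-eq8} and \eqref{apeneq-1}, this shows that $g(u)$ matches the expansion \eqref{eq-asy} up to an $o((-\log r)^{-1/2})$ error, so that the corresponding Emden–Fowler transform lies in the space $\mathcal{X}$ of Proposition \ref{prop-es}; the uniqueness clause of that proposition then forces $u = U_{\infty}$.

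The main technical obstacle will be propagating the error terms through the nested logarithms while preserving the sharpness $o((-\log r)^{-1+\varepsilon})$ rather than the coarser $o(1)$. This requires iterating the bootstrap for $g(\tilde{u})$ at least twice and controlling perturbations such as $\log[(g/g')(\tilde{u})] - \log[(g/g')(g^{-1}(-2\log r))]$ via the growth estimates of Lemma \ref{lem3-2} and the quantitative bound $\tilde{u} - g^{-1}(-2\log r) = O(\log\log r / g'(g^{-1}(-2\log r)))$ inherited from the bootstrap.
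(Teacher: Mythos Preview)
Your proposal is correct and follows essentially the same route as the paper: invert $F(\tilde u)=\tfrac{B}{4}r^{2}(-2\log r+1)$ via the expansion of $F$ in Lemma~\ref{hennalem}, bootstrap to obtain $g(\tilde u)=\rho+O(\log\rho)$ and then $\log g(\tilde u)=\log\rho+O(\log\rho/\rho)$, and replace $(g/g')(\tilde u)$ by $(g/g')(g^{-1}(\rho))$ using the mean value theorem together with $(g/g')'\to 1-1/q$; for the second part both you and the paper pass from $u-\tilde u=O(R/g'(\tilde u))$ to $g(u)-g(\tilde u)=O(R)$ by a Taylor/mean-value argument controlling $g'$ (or $g''$) at intermediate points, and conclude via the uniqueness clause of Proposition~\ref{prop-es}. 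The only point where the paper is more explicit is the final identification step: rather than arguing that the transform of $u$ lies in $\mathcal X$, the paper first shows $|g(U_\infty)-g(u)|=o(\rho^{-1/2})$ and then converts this to $|u-U_\infty|=o(g'(y_1)^{-1}\rho^{-1/2})$ by bounding $g'$ on the interval $[y_1+2y_2,y_1]$; your phrasing leaves this conversion implicit, but it is the same estimate you already invoked in Part~2.
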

\begin{remark}
\label{apenrem}
\rm{Lemma \ref{apenlem} shows that the singular solution $U_{\infty}$ in Theorem \ref{thm-sing} satisfies
\begin{equation*}
    g(U_{\infty})=g(\Tilde{u})+\log qH(g^{-1}(-2\log r)) 
    + o((-\log r)^{-1/2}),
\end{equation*} 
Moreover, Lemma \ref{bnolem} 
tells us that under the condition $(F3)$, one has
\begin{equation*}
    \log qH(g^{-1}(-2\log r))=\log (q(H(g^{-1}(-2\log r))
    -\frac{1}{q})+1)=O(H(g^{-1}(-2\log r))
    -\frac{1}{q})=o((-\log r)^{-1/2})
\end{equation*}
and thus the second term 
can be regarded 
as the reminder term. 
}

\end{remark}

\begin{proof}
We remind that
\begin{equation*}
    F(\Tilde{u})=\Phi(u_{i,\infty})= \frac{B}{4} e^{-\rho}(\rho+1)\quad\text{for $i\in \{1,2\}$ with $\rho=-2\log r$},
\end{equation*}
which implies that
\begin{equation} \label{asy-eq4}
  w :=-\log F(\Tilde{u})=\rho-\log \rho-\log\frac{B}{4}+O(1/\rho). 
\end{equation}
Therefore, we obtain by 
\eqref{asy-eq2} and Lemma 
\ref{lem2-1} that 
\begin{equation}
\label{nagaisiki}
    -\log F(\Tilde{u}) =g(\Tilde{u})+\log g(\Tilde{u})-\log (g(\Tilde{u})/g'(\Tilde{u}))+\log (1-(1+o(1))\frac{H(\Tilde{u})}{g(\Tilde{u})})=g(\Tilde{u})+(\frac{1}{q}+o(1))\log g(\Tilde{u}).
\end{equation}
Thus, we first obtain
\begin{equation} \label{asy-eq5}
    g(\Tilde{u})=\rho-(1+\frac{1}{q}+o(1))\log \rho,\hspace{4mm}\log g(\Tilde{u})=\log \rho+O(\log\rho/\rho). 
\end{equation}
Indeed, set $\eta\log \rho =g(\Tilde{u})-\rho+(1+\frac{1}{q})\log \rho$. Then, 
by \eqref{asy-eq4} and 
\eqref{nagaisiki},
$\eta$ satisfies
\begin{equation*}
\begin{split}
\rho-\log\rho -\log\frac{B}{4}+O(1/\rho)
& = g(\Tilde{u})+(\frac{1}{q}+o(1))\log g(\Tilde{u}) \\
& =\rho+ \left(-(1+\frac{1}{q})+\eta+(\frac{1}{q} + o(1))
\frac{\log(\rho+(-(1+\frac{1}{q})+\eta)\log\rho)}{\log\rho} \right)
\log \rho \\
& = \rho+ \left(-(1+\frac{1}{q})+\eta+(\frac{1}{q} + o(1))
\frac{\log\rho+O((-(1+\frac{1}{q})+\eta)\frac{\log\rho}{\rho})}
{\log\rho} \right)
\log \rho \\
& = \rho+ \left(\eta -1 + o(1)
+ O((1+|\eta|)\frac{\log\rho}{\rho}(\log \rho)^{-1}) \right)
\log \rho,
\end{split}
\end{equation*}
which implies that
\begin{equation*}
\eta+o(1)+O(\frac{(1+|\eta|)}{\rho})
=o(1).
\end{equation*}
Therefore, we have $\eta=o(1)$ as 
$\rho \to \infty$.

We define 
$y_1$ as $g(y_1)=\rho$. 
Observe from 
$g(\tilde{u}) < g(y_1)$ 
(see \eqref{asy-eq5}) 
that $\tilde{u} < y_{1}$. 
Then, since
\begin{equation*}
  g'(y_1)(y_1-\tilde{u}) \ge g(y_1)-g(\Tilde{u})\ge g'(\Tilde{u})(y_1-\Tilde{u}),
\end{equation*}
we have by \eqref{asy-eq5} that 
\begin{equation} \label{asy-eq6}
 \frac{C\log\rho}{g'(y_1)} \le |y_1-\Tilde{u}|\le \frac{C\log\rho}{g'(\Tilde{u})}
\end{equation}
By Lemmas \ref{lem2-1}, 
\eqref{eq-y52-2}, 
\eqref{asy-eq6} and 
the convexity of $g$, we obtain 
\begin{align*}
 &\frac{g'(y_1)-g'(\Tilde{u})}{g'(y_1)}\le C \frac{g''(\hat{u})}{g'(y_1)g'(\Tilde u)} 
 \log \rho
 =C \frac{g'(y_1)^2\log\rho}{g'(y_1)g^{1/q+o(1)+1}(\Tilde{u})}\le C \frac{g'(y_1)\log\rho}{g^{1/q-\delta+1}(y_1)}\le C \frac{C\log\rho}{g^{1-\delta}(y_1)}\\
 &(g(s)/g'(s))'=\frac{g'^2-g''g}{g'(s)^2}\to 1-\frac{1}{q},
\end{align*}
where $\hat{u} = \theta y_{1} 
+ (1 - \theta) \tilde{u}$ for 
some $\theta \in (0, 1)$ and $\delta>0$ is sufficiently small fixed constant. 
Therefore, it follows 
from the mean value theorem, \eqref{asy-eq5} 
and \eqref{asy-eq6} that 
\begin{equation} \label{asy-eq7}
    \frac{g(\Tilde{u})}{g'(\Tilde{u})}=\frac{g(y_1)}{g'(y_1)}(1+\frac{g'(y_1)}{g(y_1)}(g(\hat{u})/g'(
    \hat{u}))'(y_1-\Tilde{u}))= \frac{g(y_1)}{g'(y_1)}\left(1+
    O(\frac{g'(y_1)}{g'(\Tilde{u})g(y_1)}
    \log\rho)
    \right)=\frac{g(y_1)}{g'(y_1)}(1+O(\log\rho/\rho)). 
\end{equation}
From the above estimates, we obtain
\begin{align*}
&\log g(\Tilde{u})=\log (\rho-(\frac{1}{q}+o(1))\log \rho)=\log \rho +O(\log \rho/\rho), \\
&\log (\frac{g(\Tilde{u})}{g'(\Tilde{u})})=\log (\frac{g(y_1)}{g'(y_1)}(1+O(\log\rho/\rho)))=\log(\frac{g(y_1)}{g'(y_1)})+O(\log\rho/\rho), \\
&\log(1-(1+o(1))\frac{H(\Tilde{u})}{g(\Tilde{u})})=O(\frac{1}{g(\Tilde{u})})=O(1/\rho).
\end{align*}
Therefore, by using \eqref{asy-eq4} and \eqref{nagaisiki}, we obtain 
\begin{equation*}
    \rho-\log\rho-\log\frac{B}{4}+O(1/\rho)=g(\Tilde{u})+\log\rho-\log(g(y_1)/g'(y_1))+O(\log\rho/\rho).
\end{equation*}

Thus, we obtain  
\begin{equation*} 
\begin{split}
g(\Tilde{u})
=\rho-2\log\rho+\log(g(y_1)/g'(y_1))+\log 4-\log q+O(\log\rho/\rho), 
\end{split}
\end{equation*}
which yields \eqref{asy-eq8}. 

In particular, if (F3) is assumed, then 
$R = o(\rho^{-1/2})$ as $\rho \to 
\infty$.  
We see from 
\eqref{asy-eq2} and \eqref{asy-eq8} that 
the solution $u$ of \eqref{eq1-FI} satisfies
\begin{equation*}
u=\Tilde{u}+O(F(\Tilde{u})
f(\Tilde{u})R(\rho)) =\Tilde{u}+O(\frac{R(\rho)}{g'(\Tilde{u})})\le y_1
\end{equation*}
where
\begin{equation*}
   R(\rho):=\sup_{|y|\le |x|}(R_1(y)+R_2(y)).
\end{equation*}
Moreover, since $g(\tilde{u})=(1+o(1))g(y_1)$ as $\rho\to\infty$, it follows from \eqref{asy-eq7}, (G1) and the monotonicity of $g'$ that  
\begin{align*}
\frac{g''(\hat{u})}
{g'^2(\Tilde{u})}
&\le C\frac{g'^2(\hat{u})}{g(\hat{u})g'^2(\Tilde{u})}\le C\frac{g'^2(y_1)}{g(\Tilde{u})g'^2(\Tilde{u})}\\
&=C\frac{g^2(\Tilde{u})}{g'^2(\Tilde u)}\cdot\frac{g'^2(y_1)}{g^3(\Tilde{u})}\le C \frac{g^2(y_1)}{g^3(\Tilde{u})}\le 
\frac{C}{\rho} \quad\text{for any $\hat{u}$ between $\Tilde{u}$ and $y_1$}.
\end{align*}
Hence, by \eqref{asy-eq8}, 
we have
\begin{equation} \label{asy-eq9}
\begin{split}
g(u)&=g(\Tilde{u})+O(g'(\Tilde{u})f(\Tilde{u})F(\Tilde{u})R(\rho))+O\left(g''(\hat{u})
\left(f(\Tilde{u})F(\Tilde{u})R(\rho)\right)^2\right)\\
    &=g(\Tilde{u})+O(R(\rho))
    +O\left(\frac{g''(\hat{u})}{g'^2(\Tilde{u})}R(\rho)^2\right)\\
    &=g(\Tilde{u})+O(R(\rho))+O(\rho^{-1}R(\rho)^2)\\
    &
    =\rho-2\log\rho+\log(g(y_1)/g'(y_1))+\log 4-\log q+O(R(\rho))\\
    &
    =\rho-2\log\rho+\log(g(y_1)/g'(y_1))+\log 4-\log q+o(\rho^{-1/2}).
\end{split}
\end{equation}
Therefore, we have \eqref{apeneq-1}.

Finally, we prove that their singular solution $u$ corresponds with our singular solution $U_{\infty}$ if (F3) is satisfied. By \eqref{eq-asy}, \eqref{asy-eq9} and 
Lemma \ref{bnolem}, we obtain 
\begin{equation} \label{asy-eq11}
    |g(U_{\infty})-g(u)|=|\log qH|+o(\rho^{-1/2})=|\log(1+q(H-\frac{1}{q}))|+o(\rho^{-1/2})=o(\rho^{-1/2}).
\end{equation}
Moreover, by \eqref{shape-y2} and the monotonicity and convexity of $g$, we have
\begin{equation} \label{asy-eq10}
C\log \rho\ge C g'(y_1)|y_2|\ge|g(y_1)-g(y_1+2y_2)|\ge 2|y_2| g'(y_1+2y_2).
\end{equation}
Hence, by the condition (G1) and 
\eqref{asy-eq10}, we have
\begin{align*}
    |g'(y_1+2y_2)-g'(y_1)|\le g''(y_1+\theta(2y_2-y_1))|y_2|&\le C\frac{g'^2(y_1+\theta(2y_2-y_1))}{g(y_1+\theta(2y_2-y_1))}|y_2|\\
    &\le C\log\rho g'(y_1)g(y_1+2y_2)^{-1}
    \le C g'(y_1)\frac{\log\rho}{\rho}
\end{align*}
with some $0<\theta<1$. From the two estimates above and \eqref{shape-y2}, we can deduce that
\begin{equation*}
    g(y_1+2y_2)\le g(y_1)-2(2+o(1))\log\rho\le \min\{g(u),g(U_{\infty})\}.
\end{equation*}
Therefore, by the monotonicity of $g$ and the asymptotics of $U_{\infty}$ and $u$,
we see that $U_{\infty}>y_1+2y_2$ and $u>y_1+2y_2$. Hence, by the convexity of $g$, \eqref{asy-eq11} and 
the mean value formula, we have
\begin{align*}
|u-U_{\infty}|
&= |g^{-1}\left(g(U_{\infty}) 
- (g(U_{\infty}) - g(u))\right)-U_{\infty}|
=|g^{-1}(g(U_{\infty})+o(\rho^{-1/2}))-U_{\infty}|\\
&\le\frac{o(\rho^{-1/2})}{g'(y_1+2y_2)}\le o((g'(y_1))^{-1}\rho^{-1/2}).
\end{align*}
By Proposition \ref{prop-es}, we get the result.
\end{proof}

\section{Computation of 
derivative $y_{2}$}
\label{sec-y2}
In this appendix, we shall give a proof of 
\eqref{derivative-y2}. 
It follows from \eqref{eq-y59} that 
    \begin{equation} \label{eq-y95}
    \frac{d }{d \rho} (G(\rho)) = 
    H'(y_{1}(\rho)) \frac{d y_{1}}{d \rho}. 
    \end{equation}
By \eqref{eq-y76} and \eqref{eq-y52-2}, 
we have 
    \begin{equation} 
    \label{eq-y85}
    |\frac{d y_{1}}{d \rho}| 
    = \frac{1}{g'(y_{1})} 
    \leq  C \rho^{- \frac{1}{q} + o(1)}. 
    \end{equation}
By \eqref{eq-y4}, \eqref{eq-y76} and \eqref{eq-y59}, we have 
\begin{align} 
\label{eq-y79}
\begin{split}
& \frac{d}{d\rho}(\log g'(y_1))=\frac{g''(y_1)}{g'(y_1)} 
\frac{d y_1}{d \rho} 
=\frac{g''(y_1)}{g'(y_1)^2}=\frac{G(\rho)}{\rho}, \\
& \frac{d}{d\rho}(\frac{1}{g'(y_1)}) 
= - \frac{g''(y_{1})}{g'(y_1)^2} 
\frac{d y_{1}}{d \rho} 
=- \frac{1}{g'(y_1)^2} 
\frac{G(\rho) g'(y_{1})^2}{g(y_{1})}
\frac{d y_{1}}{d \rho}
=- \frac{G(\rho)}{\rho} 
\frac{d y_{1}}{d \rho}.
\end{split}
\end{align}
Observe from \eqref{eq-y76} and   
\eqref{eq-y79} 
that 
    \[
    \frac{d^{2} y_{1}}{d \rho^{2}} 
    = \frac{d}{d\rho} 
    \left(\frac{1}{g'(y_{1})} \right)
    = - \frac{G(\rho)}{\rho} \frac{d y_{1}}{d 
    \rho},  
    \]
which together with \eqref{eq-y85} 
and the condition (G1) imply 
    \begin{equation} \label{eq-y89}
     |\frac{d^2 y_{1}}{d \rho^2}| 
    \leq  C \rho^{-1 - \frac{1}{q} + o(1)}. 
    \end{equation} 
In addition, since 
$\rho \frac{d^{2} y_{1}}{d \rho^{2}} 
= - G(\rho) \frac{d y_{1}}{d \rho}$, 
one has by \eqref{eq-y95} that  
    \begin{equation} \label{eq-y92}
    \frac{d^{2} y_{1}}{d \rho^{2}} + 
    \rho \frac{d^{3} y_{1}}{d \rho^{3}}
    = - H'(y_{1}) 
    \left(\frac{d y_{1}}{d \rho} \right)^2 
    - G(\rho) \frac{d^{2} y_{1}}{d \rho^{2}}. 
    \end{equation}
We have by 
\eqref{eq-y77}, 
\eqref{eq-y79}, $\frac{d y_{1}}{d \rho} 
= \frac{1}{g'(y_{1})}$ (see \eqref{eq-y76})
\eqref{eq-y79} and \eqref{eq-y95} that 
\begin{align} \label{eq-y81}
\frac{d y_2}{d \rho}=\frac{G(\rho)}{\rho g'(y_1)}(\log \rho+\log g'(y_1)-\log G-\log 4)-\frac{1}{g'(y_1)}(\frac{1}{\rho}+\frac{G(\rho)}{\rho} 
- \frac{H'(y_1)}{G(\rho)g'(y_1)})
\end{align}
By the condition (G2), \eqref{eq-y85}, 
\eqref{eq-y52-2} and 
\eqref{eq-y89}, we have 
    \[
    |H'(y_{1}) 
    \left(\frac{d y_{1}}{d \rho} \right)^2|, 
    |G(\rho) \frac{d^{2} y_{1}}{d \rho^{2}}| 
    \leq C \rho^{-1 - \frac{1}{q} + o(1)}. 
    \]
This together with \eqref{eq-y92} implies 
    \begin{equation}\label{eq-y91}
    |\frac{d^3 y_{1}}{d \rho^3}| 
    \leq  C \rho^{-2 - \frac{1}{q} + o(1)}. 
    \end{equation}
It follows from \eqref{eq-y77},  
\eqref{eq-y81} and $\frac{d y_{1}}{d \rho} 
= \frac{1}{g'(y_{1})}$ that 
    \begin{equation} \label{eq-y84}
    \begin{split}
    \rho \frac{d y_{2}}{d \rho} 
    & = - G(\rho) y_{2}
    - \frac{d y_{1}}{d \rho} 
    (1 + G(\rho) - 
    \rho K(\rho)
    \frac{d y_{1}}{d \rho} ) \\
    & = - G(\rho) y_{2}
    - \frac{d y_{1}}{d \rho} 
    - G(\rho) \frac{d y_{1}}{d \rho}  - 
    \rho K(\rho)
    \left(\frac{d y_{1}}{d \rho}\right)^2,
    \end{split}
    \end{equation}
where 
    \[
    K(\rho) := 
    \frac{H'(y_{1}) }{G(\rho)}. 
    \]
By the condition (G2) and \eqref{eq-y52-2}, 
we obtain
    \begin{equation} \label{eq-y86}
    |K(\rho)| \leq C\frac{g'(y_{1})}{g(y_{1})} 
    \leq C\rho^{-1 + \frac{1}{q} + o(1)}.  
    \end{equation}
It follows from \eqref{eq-y51-2}, 
\eqref{eq-y85}, 
and  \eqref{eq-y86} that
    \[
    \begin{split}
    |G(\rho) y_{2}| \leq C \rho^{- 
    \frac{1}{q} +o(1)} 
    (\log \rho), 
    \qquad 
    |\frac{d y_{1}}{d \rho}|,  
    |\frac{d y_{1}}{d \rho} G(\rho)|, 
    |\rho K(\rho)
    \left(\frac{d y_{1}}{d \rho}\right)^2| 
    \leq C \rho^{- \frac{1}{q} 
    + o(1)},
    \end{split}
    \]
This together with \eqref{eq-y84} 
yields that 
    \begin{equation} \label{eq-y88}
    |\frac{d y_{2}}{d \rho}| 
    \leq C \rho^{-1 - 
    \frac{1}{q} +o(1)} 
    (\log \rho)
    \end{equation}
Since $K(\rho) G(\rho) = H'(y_{1})$, 
we obtain 
    \begin{equation} \label{eq-y93}
    K'(\rho) G(\rho) 
    + K(\rho) H'(y_{1}) \frac{d y_{1}}{d \rho} 
    = H''(y_{1}) \frac{d y_{1}}{d \rho}. 
    \end{equation}
It follows from \eqref{eq-y95}, \eqref{eq-y85}, 
\eqref{eq-y52-2} and \eqref{eq-y86} that 
    \begin{equation}\label{eq-y87}
    |K'(\rho)| 
    \leq  |K(\rho) H'(y_{1}) \frac{d y_{1}}{d \rho}| + |H''(y_{1}) \frac{d y_{1}}{d \rho}|
    \leq C \rho^{- 2 + \frac{1}{q} + o(1)}. 
    \end{equation}
Differentiating \eqref{eq-y93}, 
one has 
    \begin{equation} 
    \label{eq-y113}
    K''(\rho) G(\rho) 
    + 2 K'(\rho) H'(y_{1}) 
    \frac{d y_{1}}{d \rho} 
    + K(\rho) H''(y_{1}) 
    \left(\frac{d y_{1}}{d \rho} 
    \right)^{2} + 
    K(\rho) H'(y_{1}) \frac{d^2 y_{1}}{d \rho^2} 
    = H'''(y_{1})  \left(\frac{d y_{1}}{d \rho} 
    \right)^{2} 
    + H''(y_{1}) \frac{d^{2} y_{1}}{d \rho^{2}}. 
    \end{equation}
Then, it follows from the condition (G2), 
\eqref{eq-y85}, \eqref{eq-y89} and 
\eqref{eq-y87}
that 
    \[
    \begin{split}
    |K'(\rho) H'(y_{1}) 
    \frac{d y_{1}}{d \rho}|, 
    |K(\rho) H''(y_{1}) 
    \left(\frac{d y_{1}}{d \rho} 
    \right)^{2}|, 
    |K(\rho) H'(y_{1}) \frac{d^2 y_{1}}{d 
    \rho^2}|,
    |H'''(y_{1})  \left(\frac{d y_{1}}{d \rho} 
    \right)^{2}|, 
    |H''(y_{1}) \frac{d^{2} y_{1}}{d \rho^{2}}|
    \leq C \rho^{-3 + \frac{1}{q} + o(1)}.
    \end{split}
    \]
This together with \eqref{eq-y113} yields that 
    \begin{equation}\label{eq-y94}
    |K''(\rho)| \leq C 
    \rho^{-3 + \frac{1}{q} + o(1)}.    
    \end{equation}
Differentiating \eqref{eq-y84}, 
we obtain
    \begin{equation} \label{eq-y82}
    \begin{split}
\frac{d y_{2}}{d \rho} 
+ \rho \frac{d^2 y_{2}}{d \rho^2} 
& = -  H'(y_{1}) \frac{d y_{1}}{d \rho} 
y_{2}
- G(\rho) \frac{d y_{2}}{d \rho} 
- \frac{d^{2} y_{1}}{d \rho^2}
- G(\rho) 
\frac{d^{2} y_{1}}{d \rho^2} 
- H'(y_{1}) \left(\frac{d y_{1}}{d 
\rho}\right)^2  \\
& \quad 
- K(\rho) \left(\frac{d y_{1}}{d 
\rho}\right)^2 
- \rho  K'(\rho) \left(\frac{d y_{1}}{d 
\rho}\right)^2
- 2 \rho K(\rho) 
\frac{d y_{1}}{d 
\rho} 
\frac{d^2 y_{1}}{d 
\rho^2}. 
    \end{split}
    \end{equation}
By the condition (G2), \eqref{eq-y76},  
\eqref{eq-y51-2}, \eqref{eq-y89}, 
\eqref{eq-y86}, 
\eqref{eq-y88} and \eqref{eq-y87}, one has 
    \[
    \begin{split}
    & 
    |\frac{d y_{2}}{d \rho}|, 
    |H'(y_{1}) \frac{d y_{1}}{d \rho} 
y_{2}|, | G(\rho) \frac{d y_{2}}{d \rho}|, 
\leq C \rho^{-1 - \frac{1}{q} + o(1)} 
(\log \rho), \\
& 
|\frac{d^{2} y_{1}}{d \rho^2}|, 
|G(\rho) \frac{d^{2} y_{1}}{d 
\rho^2}|, 
|H'(y_{1}) \left(\frac{d y_{1}}{d 
\rho}\right)^2|, 
|K(\rho) \left(\frac{d y_{1}}{d 
\rho}\right)^2|, 
|\rho K(\rho) 
\frac{d y_{1}}{d 
\rho} 
\frac{d^2 y_{1}}{d 
\rho^2}|, |\rho \left(\frac{d y_{1}}{d 
\rho}\right)^2 K'(\rho)| \leq 
C \rho^{-1 - \frac{1}{q} + o(1)}. 
    \end{split}
    \]
From this and \eqref{eq-y82}, we have 
    \begin{equation} \label{eq-y90}
    |\frac{d^2 y_{2}}{d \rho^2}| \leq 
    C \rho^{- 2 - \frac{1}{q} 
    + o(1)}\log \rho.  
    \end{equation}
Differentiating \eqref{eq-y82}, 
we obtain 
     \begin{equation} \label{eq-y83}
    \begin{split}
2\frac{d^2 y_{2}}{d \rho^2} 
+ \rho \frac{d^{3} y_{2}}{d \rho^{3}}
& = -  H''(y_{1}) 
\left(\frac{d y_{1}}{d \rho}\right)^{2}y_{2}
- H'(y_{1}) \frac{d^{2} y_{1}}{d \rho^2} y_{2}
- H'(y_{1}) \frac{d y_{1}}{d \rho} 
\frac{d y_{2}}{d \rho} \\
& \quad 
- H'(y_{1}) \frac{d y_{1}}{d \rho} 
\frac{d y_{2}}{d \rho} 
- G(\rho) \frac{d^{2} y_{2}}{d \rho^{2}} 
- \frac{d^{3} y_{1}}{d \rho^3} 
- H'(y_{1}) \frac{d y_{1}}{d \rho} 
\frac{d^2 y_{1}}{d \rho^2}
- G(\rho) \frac{d^{3} y_{1}}{d \rho^3} 
\\
& \quad 
- H''(y_{1}) 
\left(\frac{d y_{1}}{d \rho} \right)^3
-2 H'(y_{1})\frac{d y_{1}}{d \rho} 
\frac{d^{2} y_{1}}{d \rho^{2}} 
- K'(\rho) \left(\frac{d y_{1}}{d \rho} \right)^2 
- 2 K(\rho) \frac{d y_{1}}{d \rho} 
\frac{d^{2} y_{1}}{d \rho^2}
\\
& \quad 
- K'(\rho) \left(\frac{d y_{1}}{d \rho} \right)^2
- \rho K''(\rho) \left(\frac{d y_{1}}{d \rho} \right)^2
- 2 \rho K'(\rho) \frac{d y_{1}}{d \rho} 
\frac{d^{2} y_{1}}{d \rho^2} \\
& \quad 
- 2 K(\rho) \frac{d y_{1}}{d \rho} 
\frac{d^{2} y_{1}}{d \rho^2} 
- 2 \rho K'(\rho) \frac{d y_{1}}{d \rho} 
\frac{d^{2} y_{1}}{d \rho^2} 
- 2 K(\rho) \left(\frac{d^{2} y_{1}}{d \rho^2} \right)^2 
-2 \rho K(\rho)\frac{d y_{1}}{d \rho} 
\frac{d^{3} y_{1}}{d \rho^3}  \\
& = 
-  H''(y_{1}) 
\left(\frac{d y_{1}}{d \rho}\right)^{2}y_{2}
- H'(y_{1}) \frac{d^{2} y_{1}}{d \rho^2} y_{2}
- 2 H'(y_{1}) \frac{d y_{1}}{d \rho} 
\frac{d y_{2}}{d \rho}
- G(\rho) \frac{d^{2} y_{2}}{d \rho^{2}}
- \frac{d^{3} y_{1}}{d \rho^3}
 \\
& \quad 
- 3H'(y_{1}) \frac{d y_{1}}{d \rho} 
\frac{d^2 y_{1}}{d \rho^2}
- G(\rho) \frac{d^{3} y_{1}}{d \rho^3}
- H''(y_{1}) \left(\frac{d y_{1}}{d \rho} \right)^3 
\\
& \quad 
 - 2 K'(\rho) \left(\frac{d y_{1}}{d \rho} \right)^2
- 4 K(\rho) \frac{d y_{1}}{d \rho} 
\frac{d^{2} y_{1}}{d \rho^2} 
- \rho K''(\rho) \left(\frac{d^{2} y_{1}}{d \rho^2} \right)^2 
 \\
& \quad    
- 4 \rho K'(\rho) \frac{d y_{1}}{d \rho} 
\frac{d^{2} y_{1}}{d \rho^2}
- 2 K(\rho) \left(\frac{d^{2} y_{1}}{d \rho^2} \right)^2
-2 \rho K(\rho)\frac{d y_{1}}{d \rho} 
\frac{d^{3} y_{1}}{d \rho^3}
    \end{split}
    \end{equation}
By the conditions (G2), \eqref{eq-y51-2}, 
\eqref{eq-y85}, 
\eqref{eq-y89}, \eqref{eq-y91}, 
\eqref{eq-y88} and \eqref{eq-y90}, one has 
    \[
    \begin{split}
&
|H''(y_{1}) 
\left(\frac{d y_{1}}{d \rho}\right)^{2}y_{2}|, \;
|H'(y_{1}) \frac{d^{2} y_{1}}{d \rho^2} y_{2}|, \;
|H'(y_{1}) \frac{d y_{1}}{d \rho} 
\frac{d y_{2}}{d \rho}|,  \;
|G(\rho) \frac{d^{2} y_{2}}{d \rho^{2}}|
\leq C \rho^{-2 - \frac{1}{q} + o(1)} \log \rho, 
\\
& 
|\frac{d^{3} y_{1}}{d \rho^3}|, \;
|H'(y_{1}) \frac{d y_{1}}{d \rho} 
\frac{d^2 y_{1}}{d \rho^2}|, \;
|G(\rho) \frac{d^{3} y_{1}}{d \rho^3}|, \;
|H''(y_{1}) \left(
\frac{d y_{1}}{d \rho} \right)^3|  
\leq C \rho^{-2 - \frac{1}{q} + o(1)}.
    \end{split}
    \]
By \eqref{eq-y85}, 
\eqref{eq-y89}, \eqref{eq-y91}, \eqref{eq-y86}, 
\eqref{eq-y88}, \eqref{eq-y87}, \eqref{eq-y94} 
and \eqref{eq-y90}, 
one has 
    \[
    \begin{split}
    & 
    |K'(\rho) \left(\frac{d y_{1}}{d \rho} \right)^2|,
    \;
    |K(\rho) \frac{d y_{1}}{d \rho} 
\frac{d^{2} y_{1}}{d \rho^2}|, \;
 |\rho K'(\rho) \frac{d y_{1}}{d \rho} 
\frac{d^{2} y_{1}}{d \rho^2}|, \;
|\rho K(\rho)\frac{d y_{1}}{d \rho} 
\frac{d^{3} y_{1}}{d \rho^3}|
\leq C \rho^{-2 - \frac{1}{q} + o(1)}, \\
    & |K(\rho) \left(\frac{d^{2} y_{1}}{d \rho^2} 
    \right)^2|
    \leq C \rho^{-3 - 
    \frac{1}{q} + o(1)}, \\
    & 
    |\rho K''(\rho) \left(\frac{d^{2} y_{1}}{d \rho^2} \right)^2|, \;
    |K'(\rho) \left(\frac{d^{2} y_{1}}{d \rho^2} \right)^2|
    \leq C \rho^{-4 - \frac{1}{q} + o(1)}
    \end{split}
    \]
Thus, we see from \eqref{eq-y83} that 
    \[
    |\frac{d^{3} y_{2}}{d \rho^{3}}| 
    \leq C \rho^{-3 - \frac{1}{q} + o(1)} 
    \log \rho.
    \]

\begin{thank}
H.K. was supported by JSPS 
KAKENHI 
Grant Number JP25H01453 and 
JP25K07089. K.K was supported by 
JSPS KAKENHI Grant Number 
23KJ0949 
\end{thank}

 \subsection*{Data availability}
Data sharing not applicable to this article as no datasets were generated or analyzed during the current study.

\subsection*{Conflict of interest}
The authors declare that they have no conflict of interest.

\bibliographystyle{plain}
\bibliography{singular_solution_2}

\noindent
Hiroaki Kikuchi
\\[6pt]
Department of Mathematics,
\\[6pt]
Tsuda University,
\\[6pt]
2-1-1 Tsuda-machi, Kodaira-shi, Tokyo 187-8577, JAPAN
\\[6pt]
E-mail: hiroaki@tsuda.ac.jp

\vspace{0.5cm}

\noindent
Kenta Kumagai
\\[6pt]
Department of Mathematics,
\\[6pt]
Institute of Science Tokyo,
\\[6pt]
2-12-1 Ookayama, 
Meguro-ku, Tokyo 152-8551, JAPAN
\\[6pt]
E-mail:kumagai.k.ah@m.titech.ac.jp

\end{document}